\def \tr {\rm{Tr}}
\def \Vol {{ \rm Vol }}
\def \qqquad {\qquad \qquad }
\def \RR {\mathbb R}
\def \EE {\mathbb E}
\def \CC {\mathbb C}
\def \PP {\mathbb P}
\def \eps {\varepsilon}
\def \cF {\mathcal F}
\def \Tr {{ \rm Tr }}
\newtheorem{theorem}{Theorem}[section]
\newtheorem{lemma}[theorem]{Lemma}
\newtheorem{proposition}[theorem]{Proposition}
\newtheorem{corollary}[theorem]{Corollary}
\theoremstyle{definition}
\newtheorem{remark}[theorem]{Remark}
\def\qed{\hfill $\vcenter{\hrule height .3mm
		\hbox {\vrule width .3mm height 2.1mm \kern 2mm \vrule width .3mm
			height 2.1mm} \hrule height .3mm}$ \bigskip}
\def \id {{\rm Id}}
\def \cov {{ \rm Cov}}
\def \var {{ \rm Var}\,}
\def \supp {{ \rm supp}}
\begin{document}

\title{Distances between non-symmetric convex bodies: optimal bounds up to polylog}

\author{Pierre Bizeul
and Boaz Klartag}

\date{}
\maketitle

\begin{abstract} In this paper we determine, up to polylogarithmic factors, the diameter of the
Banach--Mazur compactum of $n$-dimensional convex bodies without symmetry
assumptions. We prove that for any convex bodies $K_1,K_2\subset \mathbb{R}^n$,
\begin{equation}
d_{BM}(K_1,K_2)\le Cn\log^\alpha(n+1),
\label{eq_1624}
\end{equation}
for universal constants $C,\alpha>0$, improving an earlier bound of Rudelson.
We also study the partial-containment distance $d_{PC}$, in which the Banach-Mazur requirement to contain the other body in its entirety is relaxed to $99\%$-containment.
We prove that this relaxation leads to a very different behavior:
\begin{equation}
d_{PC}(K_1,K_2) \leq C \log^{\alpha} (n+1)
\label{eq_1625}
\end{equation}
for all convex bodies $K_1,K_2 \subseteq \mathbb{R}^n$. This demonstrates that in high dimensions, any convex body is not too far from an affine image of any other convex body, when we look at the bulk of their mass.

\medskip
In the centrally-symmetric case, the optimal upper bound for the Banach-Mazur distance is obtained in the John position. In contrast, our proofs rely on the isotropic position. The analytic core of our argument
is a two-sided comparison, in the gauge order, between isotropic log-concave
measures and Gaussian measures. This yields a new isotropic $M$-bound that
complements E. Milman's $M^*$-bound. We also provide applications to linear
symplectic geometry and to the first Dirichlet eigenvalue of the Laplacian.
\end{abstract}

\section{Introduction}
\label{intro}

\subsection{Main results}

Let $n \geq 2$.
The  Banach-Mazur distance
between two convex bodies $K_1, K_2 \subseteq \RR^n$, denoted by $d_{BM}(K_1, K_2)$, is
the infimum over all $\lambda \geq 1$ for which there exists an invertible
linear map $T: \RR^n \rightarrow \RR^n$ and vectors $x,y \in \RR^n$ with
\begin{equation}
 y + K_2 \subseteq T(x + K_1) \subseteq \lambda (y + K_2). \label{eq_1018}
\end{equation}
One of the basic problems in asymptotic convex geometry is to estimate the
largest possible value of $d_{BM}(K_1,K_2)$, as $K_1$ and $K_2$ range over all
convex bodies in $\RR^n$. It is well-known that for any convex body $K \subseteq \RR^n$ (i.e., a compact, convex set with a non-empty interior),
\begin{equation}  d_{BM}(K, B^n) \leq n \label{eq_308} \end{equation}
with equality if and only if $K$ is a simplex, where $B^n = \{ x \in \RR^n \, ; \, |x| \leq 1 \}$ is the unit Euclidean ball.
The proof of (\ref{eq_308}) goes back to John \cite{J} and the equality case is established in Palmon \cite{palmon}.
The bound (\ref{eq_308})  implies that $d_{BM}(K_1, K_2) \leq n^2$ for any pair of convex bodies $K_1, K_2 \subseteq \RR^n$.
At the end of the 1990s, this was improved by Rudelson \cite{rudelson} to the bound
\begin{equation}
 d_{BM}(K_1, K_2) \leq C n^{4/3} \log^{\alpha} n, \label{eq_1205} \end{equation}
where $C, \alpha > 0$ are universal constants. The methods employed in Rudelson \cite{rudelson} and in Banaszczyk, Litvak, Pajor and Szarek \cite{BLPS} at the time involved mostly tools from the local theory of Banach spaces such as the $M M^*$ product. In contrast, our point of view also emphasizes the high level of regularity enjoyed by the uniform measure on a high-dimensional convex body. We prove the following estimate, which determines the diameter of the non-symmetric Banach-Mazur
compactum up to polylogarithmic factors:

\begin{theorem} For any convex bodies $K_1, K_2 \subseteq \RR^n$,
\begin{equation}  d_{BM}(K_1, K_2) \leq C n \cdot \log^{\alpha} n, \label{eq_1042} \end{equation}
where $C, \alpha > 0$ are universal constants.
\label{thm0}
\end{theorem}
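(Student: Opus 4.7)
Following the hint in the abstract, I would prove (\ref{eq_1042}) by placing $K_1$ and $K_2$ in isotropic position---centered at the centroid, volume one, covariance a scalar multiple of the identity---and then applying a Haar-random rotation $U \in O(n)$ to one of them. Up to an overall rescaling, the ``rotation-only'' upper bound for $d_{BM}$ reads
\[
d_{BM}(K_1, U K_2) \; \le \; \sup_{\theta \in S^{n-1}} \frac{h_{K_1}(\theta)}{h_{U K_2}(\theta)} \, \cdot \, \sup_{\theta \in S^{n-1}} \frac{h_{U K_2}(\theta)}{h_{K_1}(\theta)},
\]
so the task is to control both suprema simultaneously with positive $U$-probability.

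\textbf{Upper bound via the $M^*$-bound.} The first ingredient is E.\ Milman's $M^*$-bound in the isotropic position, which provides polylogarithmic control on the mean width $M^*(K_i)$. Combined with the isotropic diameter estimate $\diam(K_i) \le C n \log^{\alpha} n$ (which is the Lipschitz constant of $h_{K_i}$ on $S^{n-1}$) and the standard concentration of Lipschitz functions on $O(n)$, an $\varepsilon$-net and union-bound argument on $S^{n-1}$ gives, with probability bounded away from zero, uniform control of the deviation of $h_{U K_2}(\theta)$ from its rotational mean. The net argument on $S^{n-1}$ naturally introduces a factor proportional to $\diam$, producing the linear-in-$n$ loss that matches the Gluskin lower bound in the symmetric setting.

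\textbf{Lower bound via the new $M$-bound.} The reverse direction is where the new $M$-bound in the isotropic position enters. It asserts, in the appropriate normalization, that the mean gauge $M(K) = \int_{S^{n-1}} \|u\|_K \, d\sigma(u)$ is polylogarithmically small for any isotropic convex body $K \subseteq \RR^n$---the dual-side companion to Milman's $M^*$-bound. It controls the measure of directions in which $K$ is unusually thin, so via a further application of concentration on $O(n)$ and a net argument on $S^{n-1}$, one excludes directions in which $h_{U K_2}(\theta)$ drops anomalously below $h_{K_1}(\theta)$. Combining the two ingredients pinches the ratio $h_{U K_2}/h_{K_1}$ into a multiplicative window of width $C n \log^{\alpha} n$, which is exactly (\ref{eq_1042}).

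\textbf{Main obstacle.} The heart of the matter is the $M$-bound in the isotropic position. The $M^*$-bound depends only on the outer shape of $K$ (its mean width), while $M(K)$ is equivalent to the mean width of the polar body $K^{\circ}$---and $K^{\circ}$ is typically far from isotropic when $K$ is, so the desired estimate on $M(K)$ does not follow from Milman's result by duality. Establishing it therefore requires genuinely new input and is, as the abstract emphasizes, the main technical contribution. Once it is in hand, the remainder of the proof should reduce to a fairly routine combination of concentration of measure on the orthogonal group with covering arguments on the sphere.
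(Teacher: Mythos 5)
Your high-level strategy is the right one and matches the paper's: place both bodies in isotropic position, rotate one by a Haar-random $U \in O(n)$, control the containments by E.~Milman's $M^*$-bound together with the new $M$-bound in isotropic position. But the mechanism you propose for the ``routine remainder'' --- concentration of Lipschitz functions on $O(n)$ combined with an $\varepsilon$-net and union bound on $S^{n-1}$ --- does not work at the required scale, and this is a genuine gap rather than a technicality. For fixed $\theta$, the map $U \mapsto h_{UK_2}(\theta) = h_{K_2}(U^{-1}\theta)$ is Lipschitz with constant $R(K_2)$, which in isotropic position can be of order $n$ (the simplex), while its mean over $U$ is only $M^*(K_2) \leq C\sqrt{n}\log^2 n$. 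Concentration on $O(n)$ or on the sphere controls deviations of size $t$ only with probability $\exp(-cnt^2/R(K_2)^2)$, so to survive a union bound over a net of cardinality $e^{Cn}$ (or larger, since passing from a net to all directions again costs the Lipschitz constant $\sim n$) you must take $t \gtrsim R(K_2) \sim n$, which swallows the mean entirely. Direction-by-direction lower bounds on $h_{UK_2}(\theta)$ at scale $\sqrt{n}\,\mathrm{polylog}(n)$ with exponentially small failure probability are simply not available for general isotropic bodies; what your argument recovers is only the trivial estimate $K_1 \subseteq Cn\,K_2$, i.e.\ $d_{BM} \leq Cn^2$. The difficulty your sketch does not see is the interaction between the two bodies: the rare directions where $h_{K_1} \sim n$ must be shown not to align under $U$ with the rare directions where $h_{UK_2} \sim 1$, and a concentration bound for $h_{UK_2}(\theta)$ alone, which knows $K_1$ only through its diameter, cannot capture this.

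The paper handles exactly this interaction by Gaussian process comparison rather than by nets: Chevet's inequality (Lemma \ref{lem_1743}, verified for non-symmetric bodies) gives $\EE \| \Gamma : K \to T \| \leq C\sqrt{n}\,[R(K)M(T) + R(T^{\circ})M(K^{\circ})]$ for a Gaussian matrix $\Gamma$, and the Benyamini--Gordon trick $\Gamma \sim (\Gamma^*\Gamma)^{1/2}U$ plus Jensen's inequality (Corollary \ref{cor_2033}) transfers this to a Haar-random $U$; then $R(K_1) \leq Cn$ and $R(K_2^{\circ}) \leq C$ from (\ref{eq_1935}), $M(K_2) \leq C\log n/\sqrt{n}$ from the new $M$-bound (Corollary \ref{cor_1617}) and $M(K_1^{\circ}) = M^*(K_1) \leq C\sqrt{n}\log^2 n$ yield $\EE\|U : K_1 \to K_2\| \leq C\sqrt{n}\log^2 n$, and Markov applied to both $U$ and $U^{-1}$ gives $d_{BM} \leq Cn\log^4 n$ (Theorem \ref{thm00}); only expectations are used, no covering or concentration on $O(n)$. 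Note also that the factor $n$ does not come from a net argument as you suggest, but from the product $R(K_1)M(K_2)$ and from squaring $\lambda$ for the two-sided containment. Finally, your proposal defers the proof of the $M$-bound itself, which is the paper's main new ingredient (Theorem \ref{thm1}, proved via stochastic localization and martingale comparison), so the centerpiece is assumed rather than supplied, while the part you describe as routine is precisely where the argument, as proposed, breaks down.
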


Our proof of Theorem~\ref{thm0} yields $\alpha \leq 4$.
The exponent $\alpha$ can be slightly improved; see Remark~\ref{rmk_exponent}.
The bound of Theorem \ref{thm0} is optimal up to polylogarithmic factors.
In the case where $K_1, K_2 \subseteq \RR^n$ are centrally-symmetric, i.e., $K_i = -K_i$,
John's theorem implies that $d_{BM}(K_i, B^n) \leq \sqrt{n}$ and hence $d_{BM}(K_1, K_2) \leq n$.
Gluskin~\cite{glu} showed that there are centrally-symmetric convex bodies with $$ d_{BM}(K_1, K_2) \geq c n. $$ We thus learn from Theorem \ref{thm0} that the
{\it Banach-Mazur diameter} in the centrally-symmetric case is not that different from the non-symmetric one. Gordon, Litvak, Meyer and Pajor \cite{GLMP} showed that
for any two convex bodies $K_1, K_2 \subseteq \RR^n$, when one of them is centrally-symmetric, we have
$$ d_{BM}(K_1, K_2) \leq n. $$  This follows from the fact, proven in \cite{GLMP},
that  for any two convex bodies $K_1, K_2 \subseteq \RR^n$ there exist invertible affine maps $T_1,T_2: \RR^n \rightarrow \RR^n$
such that $\tilde{K}_1 = T_1(K_1)$ and $\tilde{K}_2 = T_2(K_2)$ satisfy
$$ \tilde{K}_1 \subseteq \tilde{K}_2 \subseteq -n \tilde{K}_1. $$
Unlike the centrally-symmetric case, it is not known whether there exists a convex body $K_0 \subseteq \RR^n$
with $$ d_{BM}(K_0, T) \leq C \sqrt{n} \log^{\alpha} n $$
for all convex bodies $T \subseteq \RR^n$. If such a convex body $K_0 \subseteq \RR^n$ exists, then it would serve as an {\it approximate center}
of the non-symmetric Banach-Mazur compactum.

\medskip Unlike prior work, the invertible linear map that realizes the Banach-Mazur distance in Theorem \ref{thm0} is related to
the isotropic positions of $K_1$ and $K_2$. A random vector $X = (X_1,\ldots,X_n)$ in $\RR^n$ with finite second moments is {\it isotropic} if
 $$ \EE X_i = 0 \qquad \text{and} \qquad \EE X_i X_j = \delta_{ij} \qquad \qquad \qquad (i,j=1,\ldots,n) $$
 where $\delta_{ij}$ is the Kronecker delta. For any random vector $X$ in $\RR^n$ with $\EE |X|^2 < \infty$ that is not supported
 in a hyperplane, there exists an invertible affine  map $T: \RR^n \rightarrow \RR^n$ such that $T(X)$ is isotropic.
We say that a convex body $K \subseteq \RR^n$ is in isotropic position  if the random vector that is distributed uniformly in $K$ is isotropic.
The isotropic position is uniquely determined up to rotation. We refer to the Haar probability measure on the orthogonal group $O(n)$ as the uniform probability measure
on $O(n)$. Theorem \ref{thm0} follows from:

\begin{theorem} Let $K_1, K_2 \subseteq \RR^n$ be two convex bodies in isotropic position,
and let $U \in O(n)$ be a random orthogonal transformation, distributed uniformly in $O(n)$. Denote
$$ \lambda = C \sqrt{n} \log^{\alpha} n. $$
Then, with probability of at least $9/10$,
$$ U(K_1) \subseteq  \lambda  K_2.  $$
Here,  $C, \alpha > 0$ are universal constants. Our proof yields $\alpha \leq 2$.
\label{thm00}
\end{theorem}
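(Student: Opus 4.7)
}

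The containment $U(K_1) \subseteq \lambda K_2$ is equivalent to $\sup_{x \in K_1} \|Ux\|_{K_2} \le \lambda$, where $\|\cdot\|_{K_2}$ is the Minkowski gauge of $K_2$ (well-defined because the isotropic position places $0$ in the interior of $K_2$). Writing the gauge as the support function of the polar, $\|z\|_{K_2} = h_{K_2^\circ}(z)$, the quantity to control becomes the bilinear supremum
$$
\sup_{x \in K_1,\, y \in K_2^\circ} \langle Ux, y\rangle.
$$
The plan is to bound the expectation of this supremum over Haar $U$ by $C\sqrt{n}\,(\log n)^{\alpha}$ and then promote the bound to a $9/10$-probability statement via Gaussian-type concentration on $O(n)$.

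The main estimate is a Chevet-type inequality adapted to Haar orthogonal matrices,
$$
\EE_U \sup_{x \in K_1,\, y \in K_2^\circ} \langle Ux, y\rangle \;\le\; C\bigl(\mathrm{rad}(K_1)\, M^*(K_2^\circ) + \mathrm{rad}(K_2^\circ)\, M^*(K_1)\bigr),
$$
where $\mathrm{rad}(A) = \sup_{x \in A}|x|$ is the Euclidean circumradius and $M^*(A) = \int_{S^{n-1}} h_A\, d\sigma$ is the spherical mean width. This can be obtained by transferring the standard Gaussian Chevet inequality (using that the Gaussian process $\langle Gx, y\rangle$ has subgaussian scale $\sqrt n$ times that of the Haar process $\langle Ux, y\rangle$ and that $w(K) = \sqrt n\, M^*(K)$), or equivalently by applying generic chaining directly to the bilinear subgaussian process on $K_1 \times K_2^\circ$, using that $\langle Ua, b\rangle$ has subgaussian parameter $|a||b|/\sqrt n$ for Haar $U$.

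Each of the four quantities on the right-hand side is controlled using the isotropic position. The Euclidean circumradii $\mathrm{rad}(K_i)$ are at most $Cn$ up to log factors, the standard size bound for a centered isotropic convex body. For the inradius lower bound, Gr\"unbaum's inequality gives $\PP(\langle X, \theta\rangle > 0) \ge e^{-1}$ for $X$ uniform in $K_2$; combined with log-concavity and $\var \langle X, \theta\rangle = 1$, a Berwald-type reverse H\"older inequality forces $\EE \langle X, \theta\rangle^+ \ge c$, hence $h_{K_2}(\theta) \ge c$ for every unit $\theta$; polarity then yields $K_2 \supseteq c B^n$ and consequently $\mathrm{rad}(K_2^\circ) \le 1/c$. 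E.~Milman's $M^*$-bound in the isotropic position gives $M^*(K_1) \le C\sqrt n\,(\log n)^{\alpha_0}$, using the current log-polynomial bound on the isotropic constant. Finally, and this is the key new ingredient of the paper, the \emph{$M$-bound} for isotropic convex bodies states $M(K_2) \le C(\log n)^{\alpha_1}/\sqrt n$, which via the duality $M^*(K_2^\circ) = \int_{S^{n-1}}\|\cdot\|_{K_2}\, d\sigma = M(K_2)$ furnishes the estimate on $M^*(K_2^\circ)$. Substituting the four bounds yields $\EE_U \sup_{x \in K_1}\|Ux\|_{K_2} \le C\sqrt n\,(\log n)^{\alpha}$ with $\alpha = \max(\alpha_0, \alpha_1)$.

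To upgrade from expectation to $9/10$-probability, observe that $U \mapsto \sup_{x \in K_1}\|Ux\|_{K_2}$ is $\mathrm{rad}(K_1)/r(K_2) \le Cn$-Lipschitz in the Hilbert--Schmidt metric on $O(n)$; Haar measure on $O(n)$ satisfies normal concentration with subgaussian parameter $C/\sqrt n$ for unit-Lipschitz functions, so the supremum deviates from its mean by at most $C\sqrt n$ with probability at least $9/10$, a fluctuation absorbed into the main term $C\sqrt n\,(\log n)^{\alpha}$. The principal obstacle throughout is the $M$-bound itself: whereas $M^*(K)$ sphere-averages a support function and is accessible through the uniform exponential tails of one-dimensional marginals of $X$, the quantity $M(K) = \int 1/r_\theta\, d\sigma$ requires a pointwise lower bound on the radial function $r_\theta = \sup\{t \ge 0 : t\theta \in K\}$ off a negligible set of directions $\theta$, a genuinely radial rather than marginal piece of information that is the technical heart of the paper.
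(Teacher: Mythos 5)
Your proposal is correct and follows essentially the same route as the paper: reduce the containment to bounding $\EE_U \| U : K_1 \to K_2 \|$ via a non-symmetric Chevet-type inequality for Haar orthogonal matrices, then feed in the isotropic-position estimates $R(K_1) \leq Cn$, $R(K_2^{\circ}) \leq C$, E.~Milman's bound $M^*(K_1) \leq C \sqrt{n} \log^2 n$, and the paper's new $M$-bound $M(K_2) \leq C \log n / \sqrt{n}$ (Corollary \ref{cor_1617}), exactly as in the paper's proof of Theorem \ref{thm00}. Your minor deviations --- deriving the Haar version of Chevet by subgaussian transference/chaining instead of the Benyamini--Gordon $(\Gamma^* \Gamma)^{1/2} U$ argument, re-proving the inradius bound via Gr\"unbaum--Berwald instead of citing (\ref{eq_1935}), and using concentration on $O(n)$ where plain Markov--Chebyshev suffices --- are all sound and do not change the substance of the argument.
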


Theorem \ref{thm00} shows
that the upper bound (\ref{eq_1042}) for the Banach–Mazur distance is attained when $K_1$ and $K_2$ are in a ``random isotropic position'', that is,
when each body is put in isotropic position and then independently rotated by a random rotation, uniformly distributed in $O(n)$.
The isotropic position is useful not only for controlling the Banach-Mazur distance, but also for obtaining estimates related to partial containment. Indeed, after a polylogarithmic dilation, an isotropic convex body contains almost all of the volume of any other isotropic convex body:

\begin{theorem} Let $K, T \subseteq \RR^n$ be convex bodies in isotropic position.
Then for $$ \lambda = C \log^{\alpha} n $$
we have
\begin{equation}  \Vol_n(\lambda K \cap T) \geq \frac{99}{100} \cdot \Vol_n(T), \label{eq_2111} \end{equation}
where $C, \alpha > 0$ are universal constants. Our proof yields $\alpha \leq 2$.
\label{thm_2118}
\end{theorem}

The number $99/100$ on the right-hand side of (\ref{eq_2111}) may be replaced by any number $1-\varepsilon \in (0,1)$,
at the expense of modifying the value of the universal constant $C$ from Theorem \ref{thm_2118} to $C_\varepsilon = C'\log(2/\eps)$, and using the exponent $\alpha'=5/2$ if both convex
bodies are non-symmetric. See the proof of Theorem \ref{thm_2118} for details.

\medskip
The {\it partial containment distance}
between two convex bodies $K_1, K_2 \subseteq \RR^n$, denoted by $d_{PC}(K_1, K_2)$,
 is
the infimum over all $\lambda \geq 1$ for which there exist an invertible
linear map $T: \RR^n \rightarrow \RR^n$ and vectors $x,y \in \RR^n$
such that $\tilde{K}_2 = y + K_2$ and $\tilde{K}_1 = T(x + K_1)$ satisfy
$$ \Vol_n(\sqrt{\lambda} \tilde{K}_2 \cap \tilde{K}_1) \geq \frac{99}{100} \cdot \Vol_n(\tilde{K}_1)
\qquad \text{and} \qquad \Vol_n(\sqrt{\lambda} \tilde{K}_1 \cap \tilde{K}_2) \geq \frac{99}{100} \cdot \Vol_n(\tilde{K}_2). $$
In other words, when scaling $\tilde{K}_2$ by a factor of $\sqrt{\lambda}$ it contains $99\%$  of $\tilde{K}_1$,
and when scaling $\tilde{K}_1$ by the same factor it contains $99\%$  of $\tilde{K}_2$.

\begin{corollary}
For any convex bodies $K_1, K_2 \subseteq \RR^n$,
$$ d_{PC}(K_1, K_2) \leq C \log^{\alpha} n, $$
where $C, \alpha > 0$ are universal constants. Our proof yields $\alpha \leq 4$. \label{cor_2109}
\end{corollary}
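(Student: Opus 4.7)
The plan is to read off the corollary directly from Corollary \ref{cor_2118} via affine normalization. The first step is to observe that $d_{PC}$ is invariant under arbitrary invertible affine transformations of each of its two arguments. Invariance in the first argument is built into the definition, since $T(x + \cdot)$ ranges over all affine maps of $\RR^n$. For the second argument, translations of $K_2$ are absorbed by modifying $y$, while an invertible linear change $K_2 \mapsto A K_2$ can be undone by pushing the entire configuration $(\tilde{K}_1, \tilde{K}_2)$ through $A^{-1}$: dilation by $\sqrt{\lambda}$ commutes with the linear map $A$, the volumes on both sides of each defining inequality are multiplied by the common factor $|\det A|$, and an affine image of $K_1$ is mapped to another affine image of $K_1$. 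Hence replacing $K_i$ by any invertible affine image does not change $d_{PC}(K_1,K_2)$.

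Using this invariance, I would replace $K_1$ and $K_2$ by affine images placing each in isotropic position. Corollary \ref{cor_2118} then applies in both directions: with $\lambda_0 = C \log^{\alpha_0} n$ and $\alpha_0 \leq 2$, applied once to $(K,T)=(K_1,K_2)$ and once to $(K,T)=(K_2,K_1)$, I obtain simultaneously
\[
\Vol_n(\lambda_0 K_1 \cap K_2) \geq \tfrac{99}{100} \Vol_n(K_2)
\qquad \text{and} \qquad
\Vol_n(\lambda_0 K_2 \cap K_1) \geq \tfrac{99}{100} \Vol_n(K_1).
\]
Taking $T = \id$ and $x = y = 0$, so that $\tilde{K}_i = K_i$, and setting $\sqrt{\lambda} = \lambda_0$, both defining inequalities of $d_{PC}$ are satisfied with $\lambda = \lambda_0^2$. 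This yields $d_{PC}(K_1, K_2) \leq C' \log^{\alpha} n$ with $\alpha = 2\alpha_0$.

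There is no genuine obstacle here beyond bookkeeping; the substantive work is contained in Corollary \ref{cor_2118}. The only quantitative point worth highlighting is the doubling of the exponent: because the dilation factor in the definition of $d_{PC}$ is $\sqrt{\lambda}$ rather than $\lambda$, the polylogarithmic exponent squares, producing $\alpha \leq 4$ from $\alpha_0 \leq 2$, in agreement with the bound claimed in the statement.
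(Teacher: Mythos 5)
Your proof is correct and follows essentially the same route as the paper: reduce to isotropic position by affine invariance of $d_{PC}$ and then apply Corollary \ref{cor_2118} in both directions, with the exponent doubling from $\alpha_0 \leq 2$ to $\alpha \leq 4$ because the definition of $d_{PC}$ scales by $\sqrt{\lambda}$. The paper's own proof is just a two-line version of this; you merely spell out the affine-invariance bookkeeping that it leaves implicit.
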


In the example where $K_1 = [-1,1]^n$ and $K_2 = B^n$, we have
\begin{equation}  d_{PC}(K_1, K_2) \geq c \sqrt{ \log n } \label{eq_950} \end{equation}
for a universal constant $c > 0$. Hence Corollary \ref{cor_2109} is optimal up to the values of $C$ and $\alpha$. See Lemma \ref{lem_1079} for a proof of this fact.

\subsection{Gaussian comparison}

The analytic input behind the geometric estimates described above is a two-sided Gaussian
comparison theorem for gauges of isotropic log-concave random vectors.

\medskip 
Recall that a probability density $p$ in $\RR^n$ is log-concave if
 the set $\{ x \in \RR^n ; \, p(x) > 0 \}$ is convex and the function $\log (1/p)$ is convex on this set.
  We say that an absolutely continuous probability measure (or a random vector with a density) on
  $\RR^n$  is log-concave if its density is log-concave.
 The uniform probability measure on a convex body is log-concave, as is the standard Gaussian
 measure in $\RR^n$.  We say that the random vector $X$ in $\RR^n$ satisfies a Poincar\'e inequality with constant $A$ if
for all locally-Lipschitz functions $f: \RR^n \rightarrow \RR$,
\begin{equation} \var f(X) \leq A \cdot \EE |\nabla f(X)|^2. \label{eq_723} \end{equation}
The minimal $A$ for which (\ref{eq_723}) holds  is called the {\it Poincar\'e constant} of $X$, and it is denoted by $C_P(X)$.
By testing linear functions $f$ we see that $C_P(X) \geq 1$ whenever $X$ is isotropic.
We define the Kannan-Lov\'asz-Simonovits (KLS) constant $\psi_n$ as
$$ \psi_n = \sup_X \sqrt{C_P(X)}, $$
where the supremum runs over all isotropic, log-concave random vectors $X$ in $\RR^n$.
Clearly $\psi_n \geq 1$. The KLS conjecture \cite{KLS} suggests that $\psi_n \leq C$ for a universal constant $C > 0$.
It is currently known that
\begin{equation}  \psi_n \leq C \sqrt{\log n}, \label{eq_1100} \end{equation}
 as proven in \cite{k_sqrt}.

 \medskip In his recent lecture,  Giannopoulos \cite{gian}
 suggested revisiting the $M M^*$ problem in isotropic position.
 There are several well-known positions (i.e., affine images) of a convex body in $\RR^n$, such as the John position, the $\ell$-position, the $M$-position (named after Milman), and the isotropic position. The resolution of the slicing problem
 in Klartag and Lehec \cite{KL}, building upon ideas of Guan \cite{guan},  implies that the isotropic position is an $M$-position. See also Bizeul \cite{biz} for an alternative proof. Moreover, the isotropic position is well-known to imply the bound (\ref{eq_308}), exactly like the John position. Indeed,  if $K \subseteq \RR^n$ is in isotropic position, then,
 \begin{equation}  \sqrt \frac{n+2}{n} B^n \subseteq K \subseteq \sqrt{n(n+2)} B^n, \label{eq_1935} \end{equation}
 as proven in \cite{KLS}.  Giannopoulos put forth the question of whether the isotropic position is as good as the $\ell$-position for bounding $M M^*$. We show that this is indeed the case, up to polylogarithmic factors.
 The new ingredient in the proofs of our main results is the following:

\begin{theorem} Let $X$ be an isotropic, log-concave random vector in $\RR^n$ and
let $G$ be a standard Gaussian random vector in $\RR^n$. Then for any norm $\| \cdot \|$ on $\RR^n$, denoting $Q_n = \psi_n \sqrt{\log n}$,
\begin{equation}  \frac{c}{Q_n } \cdot \EE \| G \| \leq \EE \| X \| \leq C Q_n \cdot \EE \| G \|,
\label{eq_2026} \end{equation}
where $C,c > 0$ are universal constants. In fact, (\ref{eq_2026}) also  holds when
$\| \cdot \|$ is a ``gauge function'' on $\RR^n$, that is, when $\| \cdot \|$ is a real-valued, non-negative, convex function in $\RR^n$ satisfying $\| \lambda x \| = \lambda \| x \|$ for all $x \in \RR^n$ and $\lambda \geq 0$. \label{thm1}
\end{theorem}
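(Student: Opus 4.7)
\medskip
\noindent\textbf{Proof plan.}

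The plan is to compare both sides of (\ref{eq_2026}) with the common spherical reference $\sqrt n\cdot M_\sigma$, where $M_\sigma := \int_{S^{n-1}}\|\theta\|\,d\sigma(\theta)$ denotes the spherical average of $\|\cdot\|$. Since $G$ is rotationally invariant, $G = |G|\cdot\Xi$ with $\Xi\sim\sigma$ independent of $|G|$, so $\EE\|G\| = (\EE|G|)\cdot M_\sigma \asymp \sqrt n\cdot M_\sigma$. Hence Theorem \ref{thm1} reduces to proving
\begin{equation*}
 \frac{c\sqrt n}{Q_n}\, M_\sigma \,\leq\, \EE\|X\| \,\leq\, CQ_n\sqrt n\, M_\sigma.
\end{equation*}
A preliminary step reduces the problem from general gauge functions to symmetric norms at the cost of a universal factor, by replacing $\|x\|$ by $\max(\|x\|,\|-x\|)$.

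For the upper bound, I would decompose $X=|X|\cdot U$ with $U:=X/|X|$ and apply Cauchy--Schwarz, using $\EE|X|^2=n$:
\begin{equation*}
 \EE\|X\|  \,\leq\, \sqrt n\cdot \bigl(\EE\|U\|^2\bigr)^{1/2}.
\end{equation*}
Spherical concentration (L\'evy) yields $\bigl(\int_{S^{n-1}}\|\theta\|^2\,d\sigma\bigr)^{1/2}\leq C\sqrt{\log n}\cdot M_\sigma$ for any convex $1$-homogeneous $\|\cdot\|$. The remaining crucial step is to compare the $L^2$ norm of $\|\cdot\|$ under the angular distribution $\cL(U)$ with its $L^2(\sigma)$ norm; I would aim to show this costs at most a factor $C\psi_n$, by bounding the density of $\cL(U)$ with respect to $\sigma$ in an appropriate $L^2$ sense. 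This should be achievable by integrating Paouris' tail estimate together with the thin-shell bound $\var|X|^2\leq Cn\psi_n^2$ in polar coordinates, invoking the KLS bound (\ref{eq_1100}). Combining these ingredients gives the claimed bound with $Q_n=\psi_n\sqrt{\log n}$.

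The lower bound would follow by a Paley--Zygmund argument: an upper bound $\EE\|X\|^2 \leq CQ_n^2\,n\,M_\sigma^2$ obtained by the same method, combined with the reverse H\"older inequality $\EE\|X\|^2\leq C(\EE\|X\|)^2$ (valid for one-dimensional log-concave marginals of any seminorm under $X$), implies $\EE\|X\|\geq c\sqrt n\,M_\sigma/Q_n$. The main obstacle is the quantitative control of the angular distribution $\cL(X/|X|)$: while intuitively isotropic log-concave measures are ``nearly spherical'' in the bulk, extracting a precise $L^2(\sigma)$ bound on the angular density of order $\psi_n^2$ (or any sufficient surrogate) is where the KLS bound enters critically. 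Balancing this angular distortion ($\psi_n$) against the spherical $L^2/L^1$ comparison on the sphere ($\sqrt{\log n}$) produces exactly the combination $Q_n = \psi_n\sqrt{\log n}$.
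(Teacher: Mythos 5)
There are two genuine gaps, one in each half of your argument. For the upper bound, the crucial step is the comparison of $\EE \| U \|^2$, $U = X/|X|$, with $\int_{S^{n-1}} \| \theta \|^2 d\sigma_{n-1}$ via an $L^2(\sigma_{n-1})$ bound on the density of $\cL(U)$, which you hope to extract from Paouris' tail estimate and thin-shell. This cannot work: Paouris and thin-shell estimates control only the radial variable $|X|$ and say nothing about the direction $X/|X|$. The angular density of an isotropic log-concave measure can be exponentially far from uniform --- for the uniform measure on an isotropic cross-polytope or simplex the density of $\cL(X/|X|)$ with respect to $\sigma_{n-1}$ is proportional to $\| \theta \|_K^{-n}$, whose $L^2(\sigma_{n-1})$ norm is exponentially large in $n$ --- so no comparison with a factor polynomial in $\psi_n$ (or even in $n$) can be obtained this way, and no alternative argument is offered. (The upper bound itself is not at issue; it is the Eldan--Lehec theorem, which the paper simply cites.) For the lower bound, which is the actual new content of the theorem, your deduction is logically invalid: from the two \emph{upper} bounds $\EE \| X \|^2 \leq C Q_n^2 \, n \, M_\sigma^2$ and $\EE \| X \|^2 \leq C (\EE \| X \|)^2$ one cannot conclude any lower bound on $\EE \| X \|$; Paley--Zygmund or reverse H\"older arguments require a \emph{lower} bound on some moment of $\| X \|$ in terms of $M_\sigma = \int_{S^{n-1}} \| \theta \| d \sigma_{n-1}(\theta)$, and the only such bound that isotropy gives for free (via a single supporting functional) is far too weak for general norms. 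Producing that lower bound is exactly the hard part, and it again runs into the angular-distribution problem.

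For comparison, the paper's proof avoids the angular distribution entirely and works with stochastic localization: one writes $X \sim \int_0^{\infty} A_t \, dB_t$ (Corollary \ref{lem214}), shows that $\frac{1}{2} \id \leq A_t \leq 2 \id$ with high probability up to time $T \sim (\kappa_n^2 \log n)^{-1}$ (Proposition \ref{lem161}), and compares the truncated martingale with Brownian motion from both sides: a heat-equation/sub-martingale argument (Proposition \ref{cor_438}) gives $\EE F(v_T) \geq \EE F(B_T/2)$ for convex $F$, Maurey's two-Gaussian decomposition (Proposition \ref{prop_1155}) gives the matching upper bound needed to control the exceptional event via Kahane's inequality for gauge functions (Lemma \ref{lem_kahane}), and the martingale property yields $\EE \| a_T \| \leq \EE \| a_{\infty} \| = \EE \| X \|$. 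Since $\EE \| B_T \| = \sqrt{T} \, \EE \| G \|$, this produces exactly the factor $c/(\kappa_n \sqrt{\log n})$ in the lower bound, with $\kappa_n \leq 2 \psi_n$. If you want to salvage a polar-coordinate approach you would need a genuinely different mechanism for lower-bounding $\EE \| X \|$ against $\EE \| G \|$ that does not pass through density bounds for $\cL(X/|X|)$.
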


The inequality on the right-hand side of (\ref{eq_2026}) is due to Eldan and Lehec \cite{eldan_lehec}.
Our contribution is the proof of the inequality on the left-hand side.  Theorem \ref{thm11} below is a formally stronger version of Theorem \ref{thm1},
where the KLS constant $\psi_n$ is replaced by a certain coefficient $\kappa_n \leq 2 \psi_n$
related to the tensor of $3^{rd}$ moments of isotropic, log-concave random vectors in $\RR^n$.
We write that $$ \mu_1 \prec \mu_2 $$ in the gauge order if $$ \int_{\RR^n} \| x \| d \mu_1(x) \leq \int_{\RR^n} \| x \| d \mu_2(x) $$ for any gauge function $\| \cdot \|$ on $\RR^n$ (e.g. a norm).
By using the best available bound (\ref{eq_1100}) for the KLS constant, Theorem \ref{thm1} admits the following:

\begin{corollary} Let $\mu$ be an isotropic, log-concave probability measure on $\RR^n$. Then in the gauge order,
$$ \gamma_{c / \log^2 n} \prec \mu \prec \gamma_{C \log^2 n}, $$
where $\gamma_t$ is a Gaussian probability measure of mean zero and covariance $t \cdot \id$ in $\RR^n$,
and where $C, c > 0$ are universal constants. \label{cor_1403}
\end{corollary}

If the KLS conjecture holds true, then the $\log^2 n$ factors in Corollary \ref{cor_1403} are improved to $\log n$  which would be optimal; see Section~\ref{rem_1102} below.

\medskip
Corollary \ref{cor_1403} may be compared with the work of Fernique and Talagrand \cite{tal_chaining} on sub-Gaussian random vectors in $\RR^n$,
and with the work of Bobkov and Nazarov \cite{BN} and Lata\l{}a \cite{Lat} on unconditional, log-concave distributions.
We say that a probability measure $\mu$ in $\RR^n$ is sub-Gaussian with parameter $\sigma > 0$ if its barycenter lies at the origin and for any $\theta \in S^{n-1}$,
$$ \int_{\RR^n} \exp \left( \frac{\langle x, \theta \rangle^2}{\sigma^2} \right) d \mu(x) \leq 2. $$
The Fernique-Talagrand theory implies that in such a case, we have the one-sided comparison
$$ \mu \prec \gamma_{C \sigma^2},  $$
where $C > 0$ is a universal constant. It is customary to refer to a probability measure $\mu$ in $\RR^n$ as {\it unconditional} if it is invariant
under coordinate reflections in $\RR^n$. Write $\nu_{\sigma}$ for the probability measure in $\RR^n$ with density $\sigma^{-n} \exp(-\sum_{i=1}^n 2 |x_i| / \sigma)$.
It is shown in Lata\l{}a \cite{Lat} that if $\mu$ is an unconditional, isotropic, log-concave
probability measure on $\RR^n$, then we have the one-sided comparison
$$ \mu \prec \nu_{C} $$
for a universal constant $C > 0$.
While in general we cannot upgrade the gauge-order comparison of Corollary \ref{cor_1403}
to a convex-order comparison, there are large classes of convex functions for which comparison principles
hold true; see Remark \ref{rem_1814}  below.

\medskip For a convex body $K \subseteq \RR^n$ containing the origin in its interior we
consider the Minkowski functional $\| x \|_K = \inf \left \{ \lambda \geq 0 \, ; \, x \in \lambda K \right \}$, and define
$$ M(K) = \int_{S^{n-1}} \| x \|_K d \sigma_{n-1}(x) $$
where $S^{n-1} = \left \{ x \in \RR^n \, ; \, |x| =1 \right \}$ and $\sigma_{n-1}$ is the uniform probability measure
on $S^{n-1}$. The polar body to $K$ is of course $K^{\circ} = \left \{ x \in \RR^n \, ; \, \forall y \in K, x \cdot y \leq 1 \right \}$ and we abbreviate
$$ M^*(K) = M(K^{\circ}), $$
which is half of the {\it mean width} of $K$.

\begin{corollary} Let $K \subseteq \RR^n$ be a convex body in isotropic position. Then,
\begin{equation}  M(K) \leq C \frac{\psi_n \sqrt{\log n}}{\sqrt{n}}, \label{eq_1042_} \end{equation}
and
\begin{equation}  M^*(K) \leq C \log^2 n \cdot \sqrt{n}, \label{eq_1115} \end{equation}
where $C > 0$ is a universal constant. \label{cor_1617}
\end{corollary}

Inequality (\ref{eq_1115}) is due to E. Milman \cite{e_milman}. Corollary \ref{cor_1617} implies that
when $K \subseteq \RR^n$ is a convex body in isotropic position,
\begin{equation} M(K) M^*(K) \leq C \log^\alpha n
\label{eq_1003} \end{equation}
for $\alpha \leq 3$ and a universal constant $C > 0$.

\medskip The problem of finding a position where the $M M^*$ product
grows only logarithmically with the dimension in the non-symmetric case has been studied at least since the 1990s.
The bound in the centrally-symmetric case is a cornerstone
of Banach space geometry; see Pisier \cite{pisier_book} for its proof, history and various applications.

\medskip In the non-symmetric case, it was proven in Banaszczyk, Litvak, Pajor and Szarek \cite{BLPS}
that in an appropriate position, the $M(K) M^*(K)$ product is at most $C \sqrt{ d_{BM}(K, B^n) }$.
In Rudelson \cite{rudelson} it is proven
that in an appropriate position, this product is at most $C n^{1/3} \log^{\alpha} n$.
When combined with the results of Banaszczyk \cite{ban} and Banaszczyk, Litvak, Pajor and Szarek \cite{BLPS},  the bound (\ref{eq_1003}) yields
an alternative proof of a recent flatness bound due to Reis and Rothvoss \cite{RR}.

\medskip The question of bounding $M(K)$ in the isotropic position, answered by Corollary \ref{cor_1617}
up to polylogarithmic factors, was studied by
Giannopoulos, Stavrakakis, Tsolomitis and Vritsiou \cite{GSTV}
and by Giannopoulos and E. Milman \cite{GM2}. An inequality analogous to (\ref{eq_1042_}) with $C n^{-1/8} \log^\alpha n$ on the right-hand side
was proven in \cite{GM2} under the assumption that $K \subseteq \RR^n$
is centrally-symmetric, improving upon earlier estimates from \cite{GSTV}.
A generalization to the non-symmetric case was obtained by Vritsiou \cite{V}.
It is shown in Skarmogiannis \cite{skar} that up to polylogarithmic factors, Corollary \ref{cor_1617} completes the  proof of a conjecture attributed to V. Milman on multi-integral norms; see  \cite{skar} for background and for more information.

\medskip The remainder of this paper is organized as follows. Section \ref{sec_background}
provides  background on heat flow methods and stochastic localization in convex geometry.
Theorem \ref{thm1} is proven in Section \ref{sec3}. In Section \ref{sec4}, Theorem \ref{thm1} is combined with Chevet's inequality to establish the other theorems and corollaries stated above,
as well as applications to the first Dirichlet eigenvalue of the Laplacian and to linear symplectic geometry. These applications
illustrate the recurring role of the isotropic position in controlling various geometric parameters of
convex bodies, up to at most polylogarithmic factors in the dimension.

\medskip In this paper, we write $| x | = \sqrt{\sum_{i=1}^n x_i^2}$ for the Euclidean
norm of $x = (x_1,\ldots,x_n) \in \RR^n$, with $x \cdot y = \langle x, y \rangle = \sum_{i=1}^n x_i y_i$
being the standard scalar product between $x,y \in \RR^n$. For $x,y \in \RR^n$ we write $x \otimes y = (x_i y_j)_{i,j=1,\ldots,n} \in \RR^{n \times n}$.
The transpose of a matrix $A$ is denoted by $A^*$ and its trace is denoted by $\tr[A]$. We write $\| A \|_{op} = \sup_{x \neq 0} |A x| / |x|$ for the operator norm of a matrix $A \in \RR^{n \times n}$.
The support of a Borel measure $\mu$, denoted by $\supp(\mu)$, is the complement of the union of all open sets of zero $\mu$-measure.
The Hessian of a smooth function $u: \RR^n \rightarrow \RR$ at the point $x \in \RR^n$ is denoted by $\nabla^2 u(x) \in \RR^{n \times n}$.
We write $C, c, C', \tilde{c}, \bar{C}$ etc. to denote various positive universal constants whose value may change from one line to the next. We write that $A \sim B$ if $c A \leq B \leq C A$ for universal constants $c, C > 0$, and that $A \lesssim B$ if $A \leq C B$.

\medskip {\it Acknowledgements.} We are grateful to Apostolos Giannopoulos, Sasha Litvak, Vitali Milman, Yaron Ostrover and Grigoris Paouris for stimulating discussions, and
to Larry Guth for valuable correspondence on the partial containment distance. This work was carried out during our stay at the ETH Institute for Theoretical Studies, whose generous hospitality is gratefully acknowledged. We also gratefully acknowledge the support of the Israel Science Foundation (ISF).

\section{Background on stochastic localization}
\label{sec_background}

Let $\mu$ be a compactly-supported probability measure on $\RR^n$ with density $p$. In this section
we study the evolution of the measure $\mu$ under the heat flow in $\RR^n$ from the point of view of stochastic localization. To this end, for $t \geq 0$ and $\theta\in\RR^n$ we define a
probability density
$$p_{t,\theta}(x) = e^{\theta\cdot x - t|x|^2/2 - \Lambda_t(\theta)}p(x),$$
where
\begin{equation}
\Lambda_t(\theta) = \log \int_{\RR^n}  e^{\theta\cdot x -t|x|^2 / 2} p(x) dx. \label{eq_144} \end{equation}
We further denote by $a_t(\theta) \in \RR^n$ and $A_t(\theta) \in \RR^{n \times n}$ the barycenter and covariance matrix of $p_{t,\theta}$, namely
\begin{equation} a_t(\theta) = \nabla \Lambda_t(\theta) = \int_{\RR^n} x p_{t,\theta}(x)dx \label{eq_2041} \end{equation}
and
\begin{equation} A_t(\theta) = \nabla^2 \Lambda_t(\theta)  = \int_{\RR^n} (x \otimes x) p_{t, \theta}(x) dx - a_t(\theta) \otimes a_t(\theta).
\label{eq_2052} \end{equation}
Since $\mu$ is compactly-supported, the functions $a_t: \RR^n \rightarrow \RR^{n}$ and $A_t: \RR^n \rightarrow \RR^{n \times n}$
are bounded and Lipschitz, uniformly in $t \geq 0$. Indeed, any partial derivative of first order of $a_t$ or of $A_t$
is a combination of first, second and third moments of the probability density $p_{t, \theta}$ whose support is contained in the compact set $\supp(\mu)$,
see e.g.  \cite[Section 2]{KL_thin}.

\medskip
Let $(B_t)_{t \geq 0}$ be a standard Brownian motion in $\RR^n$ with $B_0 = 0$.
The {\it tilt process} $(\theta_t)_{t\geq0}$ is defined as the solution to the stochastic differential equation
\begin{equation}\label{eq_tilt}
    d\theta_t = dB_t + a_t(\theta_t)\, dt;\qquad \theta_0=0.
\end{equation}
A strong solution to this stochastic differential equation exists and is unique (e.g., {\O}ksendal \cite[Chapter 5]{oksendal}).
In fact, as explained in \cite[Section 2]{KL_thin}, the integral equation reformulation
of (\ref{eq_tilt}) admits a unique solution for any continuous path $(B_t)_{t \geq 0}$,
by classical existence and uniqueness results for ordinary differential equations.
We define the {\it stochastic localization process} $(\mu_t)_{t \geq 0}$ starting from $\mu$, as the measure-valued stochastic process with density
\[
    p_t := p_{t,\theta_t}.
\]
This process and variants thereof were introduced by Eldan \cite{eldan_phd, eldan2013thin} as tools for proving geometric inequalities in high dimensions. The process appeared earlier in other contexts such as non-linear filtering (e.g., Chigansky \cite[Example 6.15]{chiganski}), and it is closely related to pathwise analysis of the heat flow in $\RR^n$.
For example, it is explained in Klartag and Putterman \cite{klartag2023spectral} that if $X$ is a random vector with law $\mu$
that is independent of $(B_t)_{t \geq 0}$, then
$$(\theta_t)_{t\geq0} \sim (tX + B_t)_{t\geq0}, $$
i.e., the tilt process $(\theta_t)_{t\geq0}$ coincides in law with the stochastic process $(tX + B_t)_{t\geq0}$.
Furthermore, for each $t>0$, the random probability measure $\mu_t$ has the same distribution as the conditional law
\begin{equation}
\mathcal{L}(X \mid tX + B_t) \;=\; \mathcal{L}(X \mid X + B_t/t).
\label{eq_1153} \end{equation}
where $\mathcal{L}(Y)$ denotes the law of a random vector $Y$.
The barycenter and covariance of
the random probability measure $\mu_t$ are denoted by  $$ a_t = a_t(\theta_t) \in \RR^n $$ and $$ A_t = A_t(\theta_t) \in \RR^{n \times n}. $$
We refer to $(A_t)_{t \geq 0}$ as the covariance process of the stochastic localization process starting at $\mu$.
It follows from \eqref{eq_tilt} and an It\^o computation that the density $p_t$ satisfies the evolution equation
\begin{equation}\label{eq_density}
    dp_t(x) = p_t(x)(x-a_t)\cdot dB_t \qquad \qquad \text{for} \ x\in\RR^n.
\end{equation}
From \eqref{eq_density} we deduce that $(\mu_t)_{t\geq0}$ is a martingale, in the sense that for any bounded continuous test function $f: \RR^n \rightarrow \RR$,
 the process
\begin{equation} M_t = \int_{\RR^n} f d\mu_t \label{eq_1135} \end{equation}
is a martingale. Consequently, since $\mu_0 = \mu$, the measure $\mu$ can be written as an average of the random probability measures $\mu_t$
\begin{equation}\label{eq_martingale}
    \mu = \EE \mu_t  \qquad \qquad \text{for} \  t \geq 0.
\end{equation}
We see from (\ref{eq_density}) that the stochastic differential equation satisfied by the martingale in (\ref{eq_1135}) is
\begin{equation}\label{eq_172}
d \left[ \int_{\RR^n} f d\mu_t \right] = \left(\int_{\RR^n}f(x)(x-a_t)d\mu_t\right)\cdot dB_t = \cov_{\mu_t}(x,f)\cdot dB_t,
\end{equation}
where $\cov_{\nu}(f,g) = \int fg d \nu - \int f d\nu \int g d \nu$. Formula \eqref{eq_172} may be used to compute the dynamics of $a_t$, namely,
\begin{equation}\label{eq_barycenter}
    da_t = \cov(\mu_t) dB_t = A_tdB_t,
\end{equation}
where we recall that $\cov(\nu) = \int [x \otimes x] d \nu - \int x d \nu \otimes \int x d \nu \in \RR^{n \times n}$ is the covariance
matrix of the compactly-supported probability measure $\nu$ in $\RR^n$.

\medskip
Observe that the density $p_{t} = p_{t,\theta_t}$ is proportional to the product of a deterministic factor
$p(x) e^{-t |x|^2/2}$ and a random exponential tilt $e^{\theta_t \cdot x}$.
From now on, let us make the assumption that the measure $\mu$ is {\it log-concave}.
This implies that almost surely
\begin{equation}\label{eq176}
    -\nabla^2\log p_t(x) \geq t \cdot \id, \qquad \textrm{for } \mu_t\textrm{-almost all } \ x \in \RR^n.
\end{equation}
A uniform bound on the Hessian of the log-density in the form of \eqref{eq176} is well known to imply a Poincar\'e inequality with constant $1/t$.
Indeed, the log-concave Lichnerowicz inequality (e.g. \cite{k_sqrt}) states that almost surely
$$ C_P(\mu_t) \leq \frac{1}{t}.$$
By substituting linear functions in the Poincar\'e inequality (\ref{eq_723}), we deduce that almost surely
\begin{equation}\label{eq_182}
    A_t \leq \frac{1}{t} \cdot \id.
\end{equation}
The following lemma is well-known. It essentially follows from (\ref{eq_1153}) since
$X + B_t/t$ tends to $X$ as $t \to \infty$, and hence $\mu_{t, t X + B_t}$ tends to $\delta_X$, where $\mu_{t, \theta}$ is the measure whose density is $p_{t, \theta}$
and where $\delta_x$ is a Dirac delta measure at the point $x \in \RR^n$.
For completeness we provide a proof that relies
on our standing assumption that $\mu$ is log-concave.

\begin{lemma}\label{lem190} The random vector $a_{\infty} = \lim_{t \to \infty} a_t$ is well-defined and has law $\mu$.
Furthermore, almost surely the random probability measure $\mu_t$ converges weakly
as $t \to \infty$ to a Dirac delta measure $\mu_\infty$ located at~$a_\infty$.
\end{lemma}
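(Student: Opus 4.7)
The plan is to combine three ingredients: the martingale structure of $a_t$, the trace bound on $A_t$ coming from log-concavity, and the martingale identity $\mu = \EE \mu_t$.

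First I would observe that $a_t$ is a bounded martingale. The martingale property is immediate from the evolution equation $d a_t = A_t\, dB_t$ stated in \eqref{eq_barycenter}. Boundedness follows because the density $p_t$ is proportional to $p(x) \cdot \exp(\theta_t \cdot x - t|x|^2/2)$, so $\supp(\mu_t) \subseteq \supp(\mu)$, and the latter is compact by hypothesis. Hence $a_t$, being the barycenter of $\mu_t$, lies in the (bounded) convex hull of $\supp(\mu)$. The bounded martingale convergence theorem then yields almost sure existence of $a_\infty = \lim_{t\to\infty} a_t$.

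Next I would show that $\mu_t$ concentrates around $a_t$. Log-concavity of $\mu$ transfers to $p_t$ (product of the log-concave $p$ with a log-concave Gaussian-exponential factor), so the hessian bound \eqref{eq176} applies and gives the covariance estimate \eqref{eq_182}, namely $A_t \preceq (1/t)\id$. Taking traces,
\[
 \int_{\RR^n} |x - a_t|^2 \, d\mu_t(x) \;=\; \tr[A_t] \;\leq\; \frac{n}{t} \;\xrightarrow[t\to\infty]{}\; 0.
\]
For any bounded Lipschitz test function $f: \RR^n \to \RR$, Cauchy–Schwarz gives
\[
 \Bigl| \int f \, d\mu_t - f(a_t) \Bigr| \;\leq\; \mathrm{Lip}(f) \sqrt{\tr[A_t]},
\]
which tends to $0$ almost surely. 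Combined with $a_t \to a_\infty$ and the continuity of $f$, this yields $\int f\, d\mu_t \to f(a_\infty)$ a.s., i.e., $\mu_t \Rightarrow \delta_{a_\infty}$ weakly.

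Finally, I would identify the law of $a_\infty$ with $\mu$ using the martingale identity \eqref{eq_martingale}. For any bounded continuous $f$, applying $\mu = \EE \mu_t$ gives $\int f\, d\mu = \EE \int f\, d\mu_t$ for every $t \geq 0$. Since $\int f\, d\mu_t$ is uniformly bounded by $\sup |f|$ and converges a.s. to $f(a_\infty)$, dominated convergence yields $\int f\, d\mu = \EE f(a_\infty)$, which means $a_\infty$ has law $\mu$. The main technical point is really just the boundedness of $\supp(\mu)$, which makes both the martingale convergence and the dominated convergence step trivial; if one wished to drop compact support and assume only $\EE_\mu |X|^2 < \infty$, one would instead need uniform integrability of $(a_t)$, for example via $\EE |a_t|^2 \leq \EE |X|^2$ from the martingale identity applied to $x \otimes x$.
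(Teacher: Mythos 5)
Your proposal is correct and follows essentially the same route as the paper: bounded martingale convergence for $(a_t)$, concentration of $\mu_t$ around $a_t$ via the bound $A_t \leq \id/t$ (the paper phrases this as $W_2(\mu_t,\delta_{a_t}) = \Tr(A_t)^{1/2}$, you phrase it via Lipschitz test functions and Cauchy--Schwarz, which is the same estimate), and identification of the law of $a_\infty$ through $\mu = \EE \mu_t$ with dominated convergence.
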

\begin{proof} Since $\mu$ is compactly-supported, the process $(a_t)_{t \geq 0}$ is a bounded martingale
as we see for instance from (\ref{eq_barycenter}). By Doob's martingale convergence theorem (e.g. \cite[Theorem 3.19]{legall}), $a_t$ converges almost surely to some random vector $a_\infty$ as $t \to \infty$. Now,
    \begin{align*}
        W_2(\mu_t, \delta_{a_\infty}) &\leq W_2(\mu_t, \delta_{a_t}) + W_2(\delta_{a_t},\delta_{a_\infty})\\
        & = \Tr(A_t)^{1/2} + |a_t - a_\infty| \xrightarrow[t \to \infty]{} 0 \ \textrm{a.s.}
    \end{align*}
    where $W_2$ is the Wasserstein $2$-distance and we used the deterministic bound \eqref{eq_182} on the covariance matrix $A_t$. We have thus established that $\mu_t$ converges to $$ \mu_{\infty} := \delta_{a_\infty} $$ in the Wasserstein $2$-distance, almost surely. This implies in particular weak convergence. Thus for any bounded, continuous test function $f: \RR^n \rightarrow \RR$,
    \begin{equation} \EE f(a_\infty) = \EE \int_{\RR^n} f d \mu_{\infty} = \lim_{t\rightarrow +\infty} \, \EE \int_{\RR^n} f d \mu_t = \int_{\RR^n} f d \mu, \label{eq_1157} \end{equation}
    where we used (\ref{eq_martingale}) in the last passage. From (\ref{eq_1157}) we see that  $a_\infty$ has law $\mu$.
\end{proof}

\begin{corollary}\label{lem214} Let $X$ be a log-concave random vector in $\RR^n$ with law $\mu$. Assume that $\EE X = 0$. Let $A_t$ be the covariance process of the stochastic localization process starting at $\mu$. Then the following equality holds in law:
    $$X \sim \int_{0}^\infty A_tdB_t.$$
\end{corollary}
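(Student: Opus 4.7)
The plan is to deduce the equality in law directly from the two ingredients already established in Section~\ref{sec_background}: the stochastic differential equation (\ref{eq_barycenter}) governing the barycenter, and the convergence statement of Lemma~\ref{lem190}.

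First I would identify the initial value of the barycenter process. Since $\theta_0 = 0$ and $\Lambda_0(0) = 0$, the density $p_{0,0}$ coincides with $p$, hence
$$a_0 = \int_{\RR^n} x\, d\mu(x) = \EE X = 0.$$
Integrating (\ref{eq_barycenter}) from $0$ to $t$ then gives the pathwise identity
$$a_t = \int_0^t A_s\, dB_s \qquad (t \geq 0).$$
Next, by Lemma~\ref{lem190}, the (bounded) martingale $(a_t)_{t \geq 0}$ converges almost surely as $t \to \infty$ to a random vector $a_\infty$ whose law is $\mu$. Consequently, the stochastic integral $\int_0^t A_s\, dB_s$ converges almost surely as $t \to \infty$ to a limit which we may define as $\int_0^\infty A_s\, dB_s$, and this limit has law $\mu$. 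Since $X \sim \mu$, the claimed equality in law follows.

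The one point that requires attention is that the entire construction of Section~\ref{sec_background} carries the standing assumption that $\mu$ is compactly supported, whereas Corollary~\ref{lem214} only asks for $\mu$ to be log-concave. The main obstacle, if any, is therefore a routine approximation argument: conditioning $\mu$ on Euclidean balls $B(0,R)$ produces compactly-supported log-concave probability measures $\mu^R$ with zero barycenter (after a negligible translation), the identity above applies to each $\mu^R$, and one then lets $R \to \infty$. Tightness of the associated random vectors is immediate from $\mu^R \Rightarrow \mu$ and the $L^2$-convergence $a_\infty^R \to a_\infty$ coming from Lemma~\ref{lem190}, so the integrals $\int_0^\infty A_s^R\, dB_s$ pass to the limit in law, yielding the representation for $X$ itself.
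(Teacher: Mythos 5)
Your argument is correct and essentially the same as the paper's proof: integrate (\ref{eq_barycenter}) starting from $a_0 = \EE X = 0$ and invoke Lemma \ref{lem190} to identify the law of $a_\infty$ as $\mu$. The additional approximation step for non-compactly-supported $\mu$ is a reasonable caveat but is not part of the paper's proof, since the corollary is used under the standing compact-support assumption of Section \ref{sec_background} and the reduction to that case is carried out later, in the proof of Theorem \ref{thm11}.
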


\begin{proof} Recall formula (\ref{eq_barycenter}) for the dynamics of $(a_t)$.
Thus, according to Lemma \ref{lem190}, we have
$$X\sim a_\infty = a_0 + \int_0^\infty A_tdB_t = \int_0^\infty A_tdB_t, $$
since $a_0 = \EE X = 0$.
\end{proof}

In order to make substantial use of the representation given by Corollary \ref{lem214}, we need to analyze the covariance process $(A_t)_{t \geq 0}$. From now on we assume that $\mu$ is log-concave and {\it isotropic}, namely
$$A_0 = \cov\!(\mu) = \id.$$
Write $B_t = (B_{1,t},\ldots,B_{n,t}) \in \RR^n$, and recall that $B_{1,t},\ldots, B_{n,t}$ are independent standard Brownian motions in $\RR$ starting from zero. Using \eqref{eq_172}
and the It\^o formula, we compute the dynamics of $A_t$ as follows (see e.g., \cite{KL_bulletin}):
\begin{equation} \label{eq_1204}
    dA_t = \sum_{i=1}^n H_{i,t} dB_{i,t} -A_t^2dt
\end{equation}
where $H_t = (H_{1,t},\ldots,H_{n,t})$ is the tensor of third moments of $\mu_t$, i.e.
\begin{equation}
    H_{i,t} = \int_{\RR^n} (x-a_t)_i \left[ (x - a_t) \otimes (x - a_t) \right] d\mu_t \in \RR^{n \times n},
    \label{eq_1234}
\end{equation}
and $(x-a_t)_i$ is the $i^{th}$ coordinate of the vector $x - a_t \in \RR^n$.

\medskip Write $\RR^{n \times n}_{sym}$ for the vector space of real, symmetric $n \times n$ matrices.
Given a smooth functional $f: \RR^{n \times n}_{sym} \rightarrow \RR$, from (\ref{eq_1204}) and the It\^o  formula,
\begin{equation}\label{eq155}
    df(A_t) = \nabla f(A_t)\cdot\sum_{i=1}^nH_{i,t}dB_{i,t} -\nabla f(A_t)\cdot A_t^2 dt + \frac{1}{2}\sum_{i=1}^n \nabla^2f(A_t)(H_{i,t}\,,\,H_{i,t}) \, dt.
\end{equation}
We need to analyze the various terms in (\ref{eq155}).
As in Eldan \cite{eldan2013thin} we define
$$ \kappa_n = \sup_X \sqrt{ \| \EE X_1 (X \otimes X) \|^2_{HS} } $$
where the supremum runs over all isotropic, log-concave random vectors $X = (X_1,\ldots,X_n)$ in $\RR^n$,
and where $$ \| M \|_{HS} = \sqrt{\sum_{i,j=1}^n M_{ij}^2} $$ is the Hilbert-Schmidt norm of
the matrix $M = (M_{ij})_{i,j=1,\ldots,n} \in \RR^{n \times n}$.
Clearly for any isotropic log-concave random vector $X$ in $\RR^n$ and for any $\theta \in S^{n-1}$,
$$  \| \EE \langle X, \theta \rangle (X \otimes X) \|_{HS}^2  \leq \kappa_n^2. $$
A  scaling argument (see \cite[Lemma 5.1]{klartag2022bourgain}) shows that for any centered, log-concave
random vector $X$ in $\RR^n$ and for any $\theta \in S^{n-1}$,
\begin{equation}
 \| \EE \langle X, \theta \rangle (X \otimes X) \|_{HS}^2  \leq \| \cov(X) \|_{op}^{3} \kappa_n^2.
\label{eq_1945} \end{equation}
Recall the KLS constant $\psi_n$ from Section \ref{intro}. The quantity $\kappa_n$  satisfies
\begin{equation}
2 \leq \kappa_n  \leq 2 \psi_n.
\label{eq_1737}
\end{equation}
Indeed, the inequality on the left-hand side in (\ref{eq_1737}) is proven
by considering the case where $1 + X_1$ is an exponential random variable with parameter $1$,
so that $\EE X_1^3 = 2$ and $\kappa_n \geq 2$. As for the inequality on the right-hand side of (\ref{eq_1737}),
for any isotropic, log-concave random vector $X$ in $\RR^n$,
denoting $M = \EE X_1 (X \otimes X)$ and $g(x) = \langle M x, x \rangle$ we have
\begin{align*} \| M \|_{HS}^2 & = \EE X_1 \langle MX, X \rangle \leq \sqrt{ \EE X_1^2} \cdot \sqrt{ Var( g(X) ) }
\leq \sqrt{C_P(X) \cdot \EE |\nabla g(X)|^2 } \\ & = 2  \sqrt{ C_P(X) \cdot \EE |MX|^2} = 2 \sqrt{ C_P(X) }  \| M \|_{HS}
\leq 2 \psi_n \| M \|_{HS}.
\end{align*}
This implies (\ref{eq_1737}).
 The following proposition is a routine variant of well-known estimates in the theory
 of stochastic localization, going back to Eldan \cite{eldan2013thin} in a slightly different context.
 In our precise context, a proof of the right-hand side bound in (\ref{eq_1842}) appears in
 Klartag and Lehec \cite{klartag2022bourgain} and also in Bizeul \cite{bizeul2024measures}, while a version
 of the left-hand side inequality assuming a certain polynomial bound on $\psi_n$
 appears in Lee and Vempala \cite{lee2024eldan}.

\begin{proposition}\label{lem161}
    Let $T = (\tilde{C}\kappa_n^2\log n)^{-1}$. Then, with probability of at least $1-C \exp\left(-c / T\right)$,
\begin{equation} \frac{\id}{2}  \leq A_t \leq 2\id \qqquad \text{for all} \ 0\leq t\leq T, \label{eq_1842} \end{equation}
   where $\tilde{C}, C,c>0$ are universal constants.
\end{proposition}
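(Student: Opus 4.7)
The plan is to establish the upper bound $A_t \leq 2\id$ and the lower bound $A_t \geq \tfrac12\id$ separately, each with high probability, via a moment-method analysis along the SDE (\ref{eq_1204}). Let $\tau = \inf\{t \geq 0 : A_t \notin [\tfrac12\id, 2\id]\}$; by a union bound on the two failure modes it suffices to show that $\Pr(\tau \leq T) \leq C\exp(-c/T)$.

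For the upper bound $\|A_t\|_{op} \leq 2$, I apply It\^o's formula (\ref{eq155}) to the convex functional $\phi_q(A) = \tr(A^q)$ with $q = c_0 \log n$ and track $\EE\tr(A_{t\wedge\tau}^q)$. The drift $-q\tr(A_t^{q+1})\,dt$ is favorable; for the It\^o correction, the essential input is the trilinear third-moment tensor bound, which follows from the scaling inequality (\ref{eq_1945}) applied to the centered log-concave law $\mu_t$ (whose covariance is $A_t$) together with the full symmetry of the third-moment tensor: for any unit vectors $v, w$,
\[
\sum_{i=1}^n (v^T H_{i,t} w)^2 \leq \kappa_n^2 \|A_t\|_{op}^3.
\]
Diagonalizing $\text{Hess}\,\phi_q(A)(H,H) = q\sum_{j,l}\bigl(\sum_{k=0}^{q-2}\lambda_j^k \lambda_l^{q-2-k}\bigr)(v_j^T H v_l)^2$ in the eigenbasis of $A_t$, inserting the tensor bound entry-wise, and using the weighted AM--GM inequality $\sum_{k=0}^{q-2}\lambda_j^k \lambda_l^{q-2-k} \leq (q-1)(\lambda_j^{q-2}+\lambda_l^{q-2})/2$ together with H\"older-type comparisons of trace powers on the stopped event $\{\|A_t\|_{op} \leq 2\}$, I obtain a differential inequality of Gronwall type for $\EE\tr(A_{t\wedge\tau}^q)$. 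Integrating yields $\EE\tr(A_{T\wedge\tau}^q) \leq C n$ provided $\tilde C$ is chosen sufficiently large. Combined with $\|A\|_{op}^q \leq \tr(A^q)$ and Markov's inequality, this gives the desired tail bound.

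The lower bound $A_t \geq \tfrac12\id$ follows from the parallel analysis of $\psi_q(A) = \tr(A^{-q})$, using the matrix-inverse It\^o expansion $d(A_t^{-1}) = -A_t^{-1}(dA_t)A_t^{-1} + A_t^{-1}(dA_t)A_t^{-1}(dA_t)A_t^{-1}$. Here the deterministic drift of $A_t^{-1}$ is $+\id\,dt$, reflecting the expected growth $\EE A_t^{-1} \approx (1+t)\id$, and the fluctuation term is controlled by the tensor bound in its twisted form involving $A_t^{-1} H_{i,t} A_t^{-1} H_{i,t} A_t^{-1}$, which remains well-behaved as long as $\lambda_{\min}(A_t) \geq \tfrac12$. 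The same Gronwall--Markov scheme then produces the matching tail bound.

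The main obstacle I anticipate is arranging the Gronwall inequality so that it closes at the prescribed time $T = \Theta(1/(\kappa_n^2 \log n))$. A crude Frobenius bound $\sum_i \|H_{i,t}\|_{HS}^2 \leq n\kappa_n^2 \|A_t\|_{op}^3$ loses a polynomial factor of $n$ and would only yield a useless time horizon $T = \Theta(1/(n\kappa_n^2 \log n))$. The sharper pointwise trilinear bound displayed above, applied uniformly across pairs of spectral directions and combined with the eigenvalue-weighted structure of the Hessian, is what preserves the correct logarithmic scaling in $n$.
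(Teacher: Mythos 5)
Your set-up (the stopping time $\tau$, the tensor bound $\sum_i (v^T H_{i,t} w)^2 \leq \kappa_n^2 \|A_t\|_{op}^3$, which indeed follows from (\ref{eq_1945}) by symmetry of the third-moment tensor and Cauchy--Schwarz and is equivalent to the first part of the paper's Lemma \ref{lem228}, and the inverse-matrix It\^o expansion for the lower bound) is sound, and your Gronwall does close at the prescribed horizon: over $[0,T]$ with $T=(\tilde C \kappa_n^2\log n)^{-1}$ and $q=c_0\log n$, the growth factor is $\exp(Cq^2\kappa_n^2 T)=n^{Cc_0^2/\tilde C}$, a polynomial you can beat with Markov's $2^{-q}=n^{-c_0\log 2}$ by taking $c_0$ large and then $\tilde C\gtrsim c_0^2$. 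So your scheme does prove containment of the spectrum in $[1/2,2]$ up to time $T$ with failure probability $n^{-K}$ for any fixed $K$, which is all that the paper actually uses downstream (Corollary \ref{lem_2205} only needs $1-n^{-10}$).

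The genuine gap is that this does not give the tail stated in the proposition, namely $C\exp(-c/T)=Cn^{-c\tilde C\kappa_n^2}$ (the $\exp(+c/T)$ in the statement is a sign typo), which is super-polynomially small whenever $\kappa_n\to\infty$ --- and with current knowledge $\kappa_n$ may be as large as $\sqrt{\log n}$. Your moment method is intrinsically capped at polynomial tails: the Hessian of $\tr(A^q)$ carries the factor $q(q-1)$ (e.g.\ $\nabla^2\tr(A^q)(\mathrm{Id})(H,H)=q(q-1)\tr H^2$), so the Gronwall rate is of order $q^2\kappa_n^2$, and requiring the growth factor $\exp(Cq^2\kappa_n^2T)$ not to swamp the Markov decay $2^{-q}$ forces $q\lesssim 1/(\kappa_n^2 T)\sim\log n$; hence the best available tail is $\exp(-O(\log n))$, not $\exp(-c/T)$. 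Pushing $q$ up to $\kappa_n^2\log n$ to recover the stated tail would shrink the admissible horizon to $\sim 1/(\kappa_n^4\log n)$, losing the claimed $T$. The paper avoids both problems by working pathwise rather than in expectation: it applies It\^o's formula to the log-sum-exp proxies $f_\beta(A)=\beta^{-1}\log\tr e^{\beta A}$ and $g_\beta(A)=-f_\beta(-A)$ with $\beta\sim\log n$, whose Hessian costs only a single factor of $\beta$ because $\nabla f_\beta(A)=e^{\beta A}/\tr e^{\beta A}$ is a trace-one positive matrix (Lemma \ref{lem184}); it bounds the drift by $\tfrac{\beta}{2}\kappa_n^2\|A\|_{op}^3$ and, separately, the quadratic variation of the martingale part by $\sup_{\theta\in S^{n-1}}\|H_\theta\|_{op}^2\leq 9\|A\|_{op}^3$ --- a bound coming from a fourth-moment comparison for log-concave marginals (second part of Lemma \ref{lem228}), an ingredient absent from your plan --- and then invokes Freedman's martingale deviation inequality (Lemma \ref{lem_freedman}) to get the sub-Gaussian-in-$1/T$ bound $\exp(-c/T)$ at the full horizon $T\sim 1/(\kappa_n^2\log n)$. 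To prove the proposition as stated you would need to replace your expectation--Gronwall--Markov step by such a pathwise concentration argument (or switch from $\tr(A^q)$ to a normalized functional whose Hessian costs only one factor of $q$).
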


The proof of Proposition \ref{lem161}
is provided in the Appendix below. It is based on analysis of formula (\ref{eq155}) along the lines of \cite{klartag2022bourgain}.

\section{Comparing log-concave integrals with Gaussian integrals}
\label{sec3}

An observation attributed to Maurey by Eldan and Lehec \cite[Proposition 8]{eldan_lehec}
is that the endpoint value of a martingale in $\RR^n$ whose quadratic variation is
uniformly bounded, is the average of two possibly-dependent Gaussians:

\begin{proposition}[Maurey]
Let $T, r > 0$, and let $(M_t)_{t \geq 0}$ be a continuous martingale in $\RR^n$ with $M_0 = 0$. Assume
that its quadratic variation process satisfies,
almost surely,
$$ [M]_T \leq r \cdot \id. $$
Then there exist two standard Gaussian random vectors $Z_1, Z_2$ in $\RR^n$, possibly dependent, such that
\begin{equation}  M_T = \sqrt{r} \cdot \frac{Z_1 + Z_2}{2}. \label{eq_1142} \end{equation}
\label{prop_1155}
\end{proposition}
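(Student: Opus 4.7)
The plan is to exhibit $Z_1, Z_2$ as symmetric perturbations of $M_T$ by a single auxiliary Gaussian. Enlarging the probability space so as to admit a vector $\xi \sim N(0, \id)$ independent of the filtration $(\mathcal{F}_t)_{t \leq T}$ of $M$, let $\Delta = r\id - [M]_T$, which is $\mathcal{F}_T$-measurable and a.s.\ positive semidefinite by hypothesis, and write $\Delta^{1/2}$ for its PSD square root (Borel in $\Delta$ by continuous functional calculus). Set
\begin{equation*}
Z_1 = \frac{M_T + \Delta^{1/2}\xi}{\sqrt{r}}, \qquad Z_2 = \frac{M_T - \Delta^{1/2}\xi}{\sqrt{r}}.
\end{equation*}
The identity $\sqrt{r}(Z_1 + Z_2)/2 = M_T$ is immediate, so everything reduces to showing that $M_T \pm \Delta^{1/2}\xi \sim N(0, r\id)$. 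Since $-\xi$ is again a standard Gaussian independent of $\mathcal{F}_T$, the two cases coincide and I concentrate on the $+$ sign.

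To identify the law of $M_T + \Delta^{1/2}\xi$, the plan is an It\^o-calculus argument driven by the anisotropic heat semigroup. For a bounded smooth test function $\phi: \RR^n \to \RR$, define
\begin{equation*}
\Phi(C, x) = \EE\, \phi(x + C^{1/2} Z), \qquad Z \sim N(0, \id),
\end{equation*}
on the cone of PSD matrices $C$. A direct differentiation under the Gaussian integral gives the heat-type identity $D_H \Phi(C, x) = \tfrac{1}{2}\tr\bigl(H \, \nabla_x^2 \Phi(C, x)\bigr)$ for every symmetric $H$. Setting $u(t, x) = \Phi(r\id - [M]_t, x)$ and applying It\^o's formula to $u(t, M_t)$, the finite-variation $C$-drift $-\tfrac{1}{2}\tr(\nabla_x^2 \Phi \, d[M]_t)$ cancels the It\^o correction $\tfrac{1}{2}\tr(\nabla_x^2 u \, d[M]_t)$ exactly, leaving only the martingale part $\nabla_x u(t, M_t) \cdot dM_t$. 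Taking expectations,
\begin{equation*}
\EE\, \phi(M_T + \Delta^{1/2}\xi) \;=\; \EE\, u(T, M_T) \;=\; u(0, 0) \;=\; \EE\, \phi(\sqrt{r} Z),
\end{equation*}
and as this holds on a separating class of $\phi$, the law of $M_T + \Delta^{1/2}\xi$ is identified as $N(0, r\id)$, finishing the proof.

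The step I expect to require the most technical care is the regularity of $u$ where $\Delta$ fails to be strictly positive definite: there $\Phi(C, x)$ is only Lipschitz, not $C^2$, in the $C$-variable, so the It\^o formula above needs justification. A clean workaround is to convolve $\phi$ with a small Gaussian of covariance $\eps \id$, which replaces $\Phi(C, x)$ by $\Phi(C + \eps \id, x)$ and places us in the interior of the PSD cone where $\Phi$ is $C^\infty$ in both arguments; one runs the argument there and then lets $\eps \to 0$ by dominated convergence. Taking $\phi$ bounded ensures the stochastic integral is a true rather than merely local martingale, and the extension from smooth bounded $\phi$ to a separating family of test functions is routine, so no further subtleties arise.
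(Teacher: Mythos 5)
Your proposal is correct, and its core is the same as the paper's: both proofs take the identical decomposition $Z_{1,2}=\bigl(M_T\pm(r\,\id-[M]_T)^{1/2}\xi\bigr)/\sqrt r$ with an auxiliary standard Gaussian $\xi$ independent of the martingale, so the whole content is to verify that $M_T\pm(r\,\id-[M]_T)^{1/2}\xi\sim N(0,r\,\id)$. Where you diverge is in that verification. The paper computes the characteristic function: conditioning on $(M_t)_{t\le T}$ and integrating out $\xi$ reduces everything to showing $\EE\,e^{i\langle x,M_T\rangle+\langle [M]_Tx,x\rangle/2}=1$, which follows because $e^{i\langle x,M_t\rangle+\langle [M]_tx,x\rangle/2}$ is a classical (complex) exponential martingale; no regularity issues arise. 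You instead run an It\^o/heat-flow argument with general smooth bounded test functions, showing that $u(t,x)=\Phi(r\,\id-[M]_t,x)$ with $\Phi(C,x)=\EE\,\phi(x+C^{1/2}Z)$ makes $u(t,M_t)$ a martingale via the cancellation $D_H\Phi=\tfrac12\tr(H\nabla_x^2\Phi)$ against the It\^o correction. This is in fact the same martingale identity as the paper's, specialized there to $\phi(y)=e^{i\langle x,y\rangle}$ (for which $u(t,M_t)=e^{-r|x|^2/2}e^{i\langle x,M_t\rangle+\langle[M]_tx,x\rangle/2}$), but your real-variable version pays a technical price: $\Phi(C,x)$ is not smooth in $C$ at the boundary of the positive semi-definite cone, and you correctly flag this and repair it by the $\eps\,\id$-shift (i.e.\ mollifying $\phi$) followed by dominated convergence, also using monotonicity of $[M]$ so that $r\,\id-[M]_t$ stays in the cone for $t\le T$. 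So the approaches match in the essential idea; the paper's Fourier route is simply the frictionless instance of your argument, while yours is marginally more general in spirit (it identifies the law against arbitrary test functions) at the cost of the regularization step.
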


\begin{proof} As in the proof of \cite[Proposition 8]{eldan_lehec}, let $G$ be a standard Gaussian random vector in $\RR^n$
independent of $(M_t)_{t \geq 0}$. Set
$$ Y_{\pm} = M_T \pm (r \cdot \id - [M]_T)^{1/2} G, $$
where $A^{1/2}$ is the symmetric, positive semi-definite square root of the symmetric, positive semi-definite matrix $A$.
The random vectors $Z_1 = Y_+ / \sqrt{r}$ and $Z_2 = Y_- / \sqrt{r}$  clearly satisfy (\ref{eq_1142}),
and all that remains is to show that $Y_{\pm}$ is a Gaussian random vector of mean zero and covariance $r \cdot \id$.
To this end, we compute that for any fixed $x \in \RR^n$,
\begin{align} \nonumber \EE e^{i \langle x, Y_{\pm} \rangle }
& = \EE_{(M_t)} \EE_G \left[ e^{i \langle x, M_T \pm (r \cdot \id - [M]_T)^{1/2} G \rangle} \, |  \, (M_t)_{0 \leq t \leq T} \right]
\\ & = \EE_{(M_t)} \left[ e^{i \langle x, M_T \rangle - |(r \cdot \id - [M]_T)^{1/2} x|^2/2 } \right] = e^{-r |x|^2/2} \cdot \EE \left[ e^{i \langle x, M_T \rangle + \langle [M]_T x, x \rangle / 2}
\right]. \label{eq_1151}
\end{align}
By Itô's formula, the process $$ \left( e^{i \langle x, M_t \rangle + \tfrac{1}{2}\langle [M]_t x, x \rangle} \right)_{0 \leq t \leq T} $$ is a complex-valued local martingale. Moreover, since $[M]_t \leq [M]_T \leq r\id$, its modulus is bounded and therefore it is a true martingale%(see the proof of Proposition 8 in \cite{eldan_lehec})%
. Hence the last expectation in (\ref{eq_1151}) equals $$ e^{i x \cdot M_0} = 1. $$ We have thus shown that the characteristic
function of $Y_\pm$ is that of a Gaussian of mean zero and covariance $r \cdot \id$.
Since the characteristic function determines the distribution, the proof is complete.
\end{proof}

The next proposition
is concerned with the opposite situation, when the derivative of the quadratic variation of the martingale is at least
$r \cdot \id$. Recall that $(B_t)_{t \geq 0}$ is a standard Brownian motion in $\RR^n$ with $B_0 = 0$.

\begin{proposition}
Let $r, T > 0$ and let $(M_t)_{t \geq 0}$ be a continuous martingale in $\RR^n$ of the form
$$ M_t = \int_0^t \Sigma_s d B_s \qquad \qquad \qquad (0 \leq t \leq T) $$
where $(\Sigma_s)_{s \geq 0}$ is a uniformly-bounded, continuous, matrix-valued stochastic process, adapted to the filtration
of the Brownian motion $(B_t)$, such that for any $t \in [0,T]$,
$$ \Sigma_t \Sigma_t^* \geq r \cdot \id \qquad \qquad \text{almost surely}. $$
Then for any  convex function $F: \RR^n \rightarrow \RR$ that does not grow faster than polynomially at infinity,
\begin{equation}  \EE F(M_T) \geq \EE F(\sqrt{r} B_T). \label{eq_145} \end{equation}
\label{cor_438}
\end{proposition}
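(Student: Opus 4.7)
The plan is to compare $\EE F(M_T)$ with $\EE F(\sqrt{r} B_T)$ by exhibiting an explicit submartingale that interpolates between them, via heat flow and It\^o's formula.

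First I would introduce the auxiliary function
$$ \phi(t, x) = \EE F\bigl(x + \sqrt{r(T - t)}\, Z\bigr), \qquad (t, x) \in [0, T] \times \RR^n, $$
where $Z$ is a standard Gaussian random vector in $\RR^n$. Two properties are crucial. First, since $F$ is convex, $\phi(t, \cdot)$ is convex for every fixed $t \in [0, T]$, being an average of convex functions; equivalently, $\nabla^2 \phi(t, x) \geq 0$ in the sense of symmetric matrices. Second, for $t < T$ the function $\phi$ is smooth (being the convolution of $F$ with a positive-variance Gaussian), has polynomial growth in $x$ uniformly in $t$, and satisfies the backward heat equation
$$ \partial_t \phi(t, x) + \frac{r}{2} \Delta \phi(t, x) = 0, \qquad \phi(T, x) = F(x). $$

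Next I would apply the It\^o formula to the process $\phi(t, M_t)$ on $[0, T - \eps]$ for small $\eps > 0$. Since $d[M]_t = \Sigma_t \Sigma_t^* \, dt$, this yields
$$ d\phi(t, M_t) = \Bigl( \partial_t \phi + \tfrac{1}{2} \tr\bigl[ \nabla^2 \phi \cdot \Sigma_t \Sigma_t^* \bigr] \Bigr) dt + \nabla \phi \cdot \Sigma_t \, dB_t. $$
Substituting the backward heat equation, the drift term simplifies to
$$ \frac{1}{2} \tr\bigl[ \nabla^2 \phi(t, M_t) \cdot ( \Sigma_t \Sigma_t^* - r \cdot \id ) \bigr] \, dt. $$
Here both factors inside the trace are symmetric positive semi-definite: $\nabla^2 \phi \geq 0$ by convexity of $\phi$, and $\Sigma_t \Sigma_t^* - r \cdot \id \geq 0$ by hypothesis. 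Writing $\tr[AB] = \tr[A^{1/2} B A^{1/2}] \geq 0$ for PSD symmetric matrices $A, B$, this drift is nonnegative. Hence $\phi(t, M_t)$ is a local submartingale on $[0, T - \eps]$, and the uniform bound on $\Sigma$ together with the polynomial growth of $\nabla \phi$ upgrades the martingale part to a true martingale with integrable quadratic variation, so $\phi(t, M_t)$ is an honest submartingale.

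From submartingality,
$$ \EE \phi(T - \eps, M_{T - \eps}) \geq \phi(0, 0) = \EE F\bigl(\sqrt{rT}\, Z\bigr) = \EE F\bigl(\sqrt{r}\, B_T\bigr). $$
Finally I would let $\eps \downarrow 0$. Since $F$ has polynomial growth and $(M_t)_{t \leq T}$ has moments of all orders (because $\Sigma$ is uniformly bounded), dominated convergence shows $\EE \phi(T - \eps, M_{T - \eps}) \to \EE F(M_T)$, establishing (\ref{eq_145}). The only mild obstacle is this last regularity step: one should verify uniform integrability of $\phi(T - \eps, M_{T - \eps})$ as $\eps \downarrow 0$, which follows from the polynomial growth bound $|\phi(t, x)| \leq C(1 + |x|^k)$ inherited from $F$ and the boundedness of $\Sigma$. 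Everything else is a clean computation, and the core mechanism is simply that the heat operator generated by $\sqrt{r} B$ is dominated, under the convex order, by the generator of $M$.
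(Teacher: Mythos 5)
Your proof is correct and follows essentially the same route as the paper: both run the heat semigroup on $F$ (your $\phi(t,x)$ is exactly the paper's $u_{r(T-t)}(x)$ after their reduction to $r=1$), apply It\^o's formula, and observe that the drift $\tfrac12 \tr\bigl[\nabla^2\phi \,(\Sigma_t\Sigma_t^*-r\,\id)\bigr]$ is nonnegative as a trace of a product of positive semi-definite matrices, making the interpolating process a submartingale. Your explicit $\eps$-truncation near $t=T$ and the uniform-integrability check is a slightly more careful treatment of the endpoint regularity that the paper handles implicitly, but the argument is the same.
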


\begin{proof} We may assume that $r = 1$, since otherwise we can divide  $M_t$ and $\Sigma_t$ by $\sqrt{r}$
and replace the convex function $F(x)$ by the convex function $F(\sqrt{r} x)$. For $t > 0$ and $x \in \RR^n$
consider the convolution
\begin{equation}  u_t (x) = (F * \gamma_t)(x) < \infty \label{eq_149} \end{equation}
where $\gamma_t(x) = (2 \pi t)^{-n/2} \exp(-|x|^2/(2t))$ is the density of a Gaussian random vector of mean zero and covariance $t \cdot \id$ in $\RR^n$.
We define by continuity $$ u_0(x) = F(x), $$ and observe that $u_t$ is convex for all $0 \leq t \leq T$. Moreover, by the growth assumptions on $F$ we are allowed to differentiate the integral defining the convolution in (\ref{eq_149}) under the integral sign, and conclude that for $t > 0$ the function $u_t$ is a smooth function
all of whose partial derivatives of all orders grow at most polynomially at infinity, and that we have the {\it heat equation}:
\begin{equation}  \partial_t u_t = \frac{\Delta u_t}{2}. \label{eq_321} \end{equation}
In order to prove (\ref{eq_145}), it suffices to show that
\begin{equation}
\EE u_T(M_0) \leq \EE u_0(M_T).
\label{eq_157} \end{equation}
Indeed, the right-hand side  of (\ref{eq_157}) equals $\EE F(M_T)$ while the left-hand side  equals $u_T(0) = \EE F(B_T)$.
In order to prove (\ref{eq_157}) we will show that the stochastic process
$$ u_{T-t}(M_t) \qquad \qquad \qquad (0 \leq t \leq T), $$
is a sub-martingale, and in particular its expectation is non-decreasing in $t$. Our growth assumptions on the convex function $F$ and the fact that $\Sigma_t$ is uniformly bounded
imply that $\EE \int_0^T |\nabla u_{T-t}(M_t)|^2 dt < \infty$. Hence the stochastic process
$$ \int_0^t \nabla u_{T-s}(M_s) \cdot \Sigma_s d B_s \qquad \qquad \qquad (0 \leq t \leq T) $$
is a martingale. By the It\^o formula and (\ref{eq_321}),
\begin{align*} d u_{T-t}(M_t) & = \partial_t u_{T-t}(M_t) dt + \nabla u_{T-t}(M_t) \cdot \Sigma_t d B_t + \frac{1}{2} \tr \left[ \nabla^2 u_{T-t}(M_t) \Sigma_t \Sigma_t^*  \right] dt \\ & = \nabla u_{T-t}(M_t) \cdot \Sigma_t d B_t + \frac{1}{2} \tr \left[ \nabla^2 u_{T-t}(M_t) (\Sigma_t \Sigma_t^* - \id) \right] dt.
%\\ & \geq \nabla u_{T-t}(M_t) \cdot \Sigma_t d B_t.
\end{align*}
Since both symmetric matrices $\nabla^2 u_{T-t}(M_t)$ and $\Sigma_t \Sigma_t^* - \id$ are positive semi-definite, the stochastic process  $(u_{T-t}(M_t))_{0 \leq t \leq T}$ is the sum of a martingale and a non-decreasing term, hence a sub-martingale.
\end{proof}

Let $\mu$ be an isotropic, log-concave, compactly-supported probability measure on $\RR^n$.
Recall the processes $(\theta_t)_{t \geq 0}, (a_t)_{t \geq 0}$ and $(A_t)_{t \geq 0}$ described at the beginning of Section \ref{sec_background}. For a function $f: \RR \rightarrow \RR$ and a symmetric matrix
$M \in \RR^{n \times n}$ whose spectral decomposition is $$ M = \sum_{i=1}^n \lambda_i u_i \otimes u_i, $$
for an orthonormal basis $u_1,\ldots,u_n \in \RR^n$ and $\lambda_1,\ldots,\lambda_n \in \RR$,
we write
$$ f(M) = \sum_{i=1}^n f(\lambda_i) u_i \otimes u_i. $$
Consider the truncation function $$ f(u) = \min \{ \max \{u, 1/2 \}, 2 \}, $$
and let us denote
$$ \Sigma_t = f(A_t). $$
Then $(\Sigma_t)_{t \geq 0}$ is a continuous stochastic process in $\RR^{n \times n}_{sym}$,
adapted to the filtration of the Brownian motion $(B_t)_{t \geq 0}$, such that for any $t > 0$, almost surely,
\begin{equation}  \frac{\id}{2} \leq \Sigma_t \leq 2 \id. \label{eq_1726} \end{equation}
Consider the continuous martingale
\begin{equation} v_t = \int_0^t \Sigma_s d B_s \qquad \qquad \qquad (t \geq 0). \label{eq_1706} \end{equation}
Proposition~\ref{prop_1155} and Proposition~\ref{cor_438} imply the following corollary.

\begin{corollary} Let $F: \RR^n \rightarrow \RR$ be a convex function that does not grow faster than polynomially at infinity.
Then for any $t > 0$,
\begin{equation}  \EE F( B_t / 2) \leq \EE F(v_t) \leq \EE F( 2  B_t). \label{eq_1725} \end{equation}
Moreover, with probability of at least $1 - 1/n^{20}$, for all $0 \leq t \leq c_0 \kappa_n^{-2} / \log n$,
$$ a_t = v_t, $$
where $c_0 > 0$ is a universal constant.
\label{lem_2205}
\end{corollary}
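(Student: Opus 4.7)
The plan is to derive the two-sided inequality (\ref{eq_1725}) by applying Propositions~\ref{prop_1155} and~\ref{cor_438} to the martingale $(v_t)$, using the deterministic sandwich $\id/2 \leq \Sigma_t \leq 2\id$ from (\ref{eq_1726}), and then to identify $v_t$ with $a_t$ on the high-probability event supplied by Proposition~\ref{lem161}. The quadratic variation of $v$ is $[v]_t = \int_0^t \Sigma_s^2\,ds$, which by (\ref{eq_1726}) is sandwiched deterministically between $(t/4)\id$ and $4t\id$. The upper bound lets me invoke Maurey's decomposition (Proposition~\ref{prop_1155}) with $r = 4t$, writing $v_t$ in law as $\sqrt{t}(Z_1+Z_2)$ for two (possibly dependent) standard Gaussian vectors; convexity of $F$ and Jensen's inequality then yield
\[ \EE F(v_t) \;\leq\; \tfrac12 \EE F(2\sqrt{t}\,Z_1) + \tfrac12 \EE F(2\sqrt{t}\,Z_2) \;=\; \EE F(2 B_t). \]
The pointwise lower estimate $\Sigma_s \Sigma_s^{*} \geq \id/4$ is exactly the hypothesis of Proposition~\ref{cor_438} with $r = 1/4$, so $\EE F(v_t) \geq \EE F(B_t/2)$ follows immediately.

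For the second assertion, the key observation is that the clip function $f(u) = \min\{\max\{u,1/2\},2\}$ agrees with the identity on $[1/2,2]$. Consequently, on the event $\mathcal{G}_T = \{ \id/2 \leq A_s \leq 2\id \text{ for all } 0 \leq s \leq T \}$ from Proposition~\ref{lem161}, with $T = (\tilde C \kappa_n^2 \log n)^{-1}$, one has $\Sigma_s = f(A_s) = A_s$ throughout $[0,T]$. Combining this with (\ref{eq_barycenter}), the difference satisfies $d(v_s - a_s) = (\Sigma_s - A_s)\,dB_s = 0$ on $[0,T]$, and since $v_0 = 0 = a_0$ (the latter because $\mu$ is isotropic, hence centered), we conclude $v_s = a_s$ throughout $[0,T]$ on $\mathcal{G}_T$. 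Proposition~\ref{lem161}, combined with $\kappa_n \geq 2$ from (\ref{eq_1737}), bounds $\PP(\mathcal{G}_T)$ from below by $1 - C e^{-c/T} \geq 1 - C n^{-4 c \tilde C}$; taking $\tilde C$ sufficiently large (which shrinks $T$ accordingly) and then choosing $c_0 \leq 1/\tilde C$ drives this probability above $1 - 1/n^{10}$ on the shorter horizon $[0, c_0 \kappa_n^{-2}/\log n] \subseteq [0,T]$.

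I do not anticipate any genuine obstacle here: the corollary is essentially an assembly of three ingredients that have already been established, and the only delicate point is matching the constants $c_0$ and $\tilde C$ so that both inclusions (the horizon inclusion and the probability threshold $1-1/n^{10}$) are met simultaneously.
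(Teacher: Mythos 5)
Your proposal is correct and follows essentially the same route as the paper: the upper bound via Maurey's decomposition (Proposition \ref{prop_1155} with $r=4t$) plus Jensen, the lower bound via Proposition \ref{cor_438} with $r=1/4$, and the identification $v_t=a_t$ on the event of Proposition \ref{lem161} because $f$ is the identity on $[1/2,2]$ and $a_0=v_0=0$. The only cosmetic difference is in the constant bookkeeping for the probability $1-1/n^{10}$: you enlarge $\tilde{C}$ whereas the paper shrinks $c_0$, and both devices implicitly use the (true, from the appendix proof) fact that Proposition \ref{lem161} holds on any shorter horizon with failure probability controlled by that shorter horizon.
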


\begin{proof} It follows from (\ref{eq_1726}) and (\ref{eq_1706}) that $[v]_t \leq 4 t \cdot \id$.
Hence by Proposition \ref{prop_1155},
$$ \EE F(v_t) = \EE F \left(\frac{2 \sqrt{t} Z_1 + 2 \sqrt{t} Z_2}{2} \right) \leq \EE \frac{F(2 \sqrt{t} Z_1) +
F(2 \sqrt{t} Z_2)}{2} = \EE F(2 B_t). $$
This proves the right-hand side inequality in (\ref{eq_1725}).
Thanks to (\ref{eq_1726}) and (\ref{eq_1706}) we may apply Proposition \ref{cor_438} with $r = 1/4$ ,
and conclude the left-hand side inequality in (\ref{eq_1725}). For the ``Moreover''
part, we set
$$ T = c_0 \kappa_n^{-2} / \log n, $$
for an appropriate universal constant $c_0 > 0$ to be determined shortly.
Recalling that $\kappa_n \geq 1$, we apply Proposition \ref{lem161} and conclude that
\begin{equation}  \PP \left( \forall 0\leq t\leq T, \quad \frac{1}{2}\id \leq A_t \leq 2\id  \right) \geq 1 - C\exp(-c/T) \geq 1 - \frac{1}{ n^{20}}, \label{eq_2017} \end{equation}
for a suitable choice of the universal constant $c_0 > 0$. Denote by $\cF$ the event whose probability is estimated in (\ref{eq_2017}).
Then under the event $\cF$, for $0 \leq t \leq T$,
$$ v_t = \int_0^t \Sigma_s d B_s = \int_0^t A_s d B_s = a_t, $$
completing the proof. \end{proof}

The Kahane inequalities are
reverse H\"older inequalities for norms of (say) a Gaussian random vector; see, e.g., Milman and Schechtman \cite[Appendix III]{MS}.
We were not able to find in the literature a concise version for {\it gauge functions},
which as in Rockafellar \cite[Section I.4]{roc} are convex functions $F: \RR^n \rightarrow [0, \infty)$
satisfying $F(\lambda x) = \lambda F(x)$ for all $x \in \RR^n, \lambda \geq 0$.
See Fradelizi \cite{fradelizi} for closely-related versions. For completeness we  provide here a proof relying on the
Gaussian isoperimetric inequality, for which we refer the reader e.g. to Ledoux \cite[chapter 2]{ledoux} and references therein. For $t \in \RR$ we denote
$$ \Phi(t) = \int_{-\infty}^t \frac{e^{-x^2/2}}{\sqrt{2 \pi}} dx $$
and we will use the standard inequalities
\begin{equation}   \frac{e^{-t^2/2}}{ \sqrt{2 \pi} \cdot (t+1)} \leq 1-\Phi(t)  \leq \frac{e^{-t^2/2}}{\sqrt{2 \pi} \cdot t}
\qquad \qquad \qquad (t > 0). \label{eq_1327} \end{equation}

\begin{lemma}[``Kahane's inequality for gauge functions'']
Let $\| \cdot \|$ be a gauge function in $\RR^n$, and let $G$ be a standard Gaussian random vector in $\RR^n$.
Then for any $p \geq 2$,
$$ \left( \EE \| G \|^p \right)^{1/p} \leq C \sqrt{p} \cdot \EE \| G \|, $$
where $C \leq 30$ is a universal constant.
\label{lem_kahane}
\end{lemma}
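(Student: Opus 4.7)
The plan is to normalize $\nu := \EE \|G\|$ and derive sub-Gaussian tails of scale $\nu$ for $\|G\|$, then integrate via the layer-cake formula.

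The key (and only non-routine) input is the Euclidean Lipschitz bound
$$ L_F := \sup_{|x| = 1} \|x\| \leq \sqrt{2\pi}\, \nu.$$
Indeed, writing $F(x) := \|x\|$, the gauge $F$ is convex and positively homogeneous (i.e.\ sublinear), so it is the support function of the closed convex set $K = \partial F(0) \subseteq \RR^n$, and its Lipschitz constant equals the circumradius of $K$, namely $L_F = \sup_{y \in K} |y|$. The assumption $F \geq 0$ forces $0 \in K$. For each $y \in K$, the inequalities $F(G) \geq \langle y, G \rangle$ and $F(G) \geq 0$ combine to give $F(G) \geq \max(\langle y, G \rangle, 0)$, and since $\langle y, G \rangle$ is a centered Gaussian with variance $|y|^2$ one has $\nu \geq |y| \cdot \EE \max(N(0,1), 0) = |y|/\sqrt{2\pi}$. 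Taking the supremum over $y \in K$ yields the stated bound. This step crucially uses the nonnegativity of the gauge; it is what prevents the Lipschitz constant from being a free parameter in the tail estimate.

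Now let $M$ be the median of $F(G)$. The sublevel set $\{x \in \RR^n : F(x) \leq M\}$ is convex with Gaussian measure at least $1/2$, so the Gaussian isoperimetric inequality gives $\PP(G \in \{F \leq M\} + tB^n) \geq \Phi(t)$ for all $t \geq 0$. Combining with the inclusion $\{F \leq M\} + tB^n \subseteq \{F \leq M + tL_F\}$ (a direct consequence of the Lipschitz property) yields $\PP(F(G) > M + tL_F) \leq 1 - \Phi(t)$. Using $M \leq 2\nu$ (from $\nu \geq M \cdot \PP(F(G) \geq M) \geq M/2$) together with $L_F \leq \sqrt{2\pi}\,\nu$, this becomes
$$ \PP\bigl(F(G) > (2 + \sqrt{2\pi}\, t)\,\nu\bigr) \leq 1 - \Phi(t) \qquad (t \geq 0).$$

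The conclusion follows by integrating $\EE F(G)^p = \int_0^\infty p s^{p-1} \PP(F(G) > s)\, ds$: splitting at $s = 2\nu$ produces a trivial contribution of $(2\nu)^p$, while for $s > 2\nu$ the substitution $s = (2 + \sqrt{2\pi}\, t)\nu$ together with the upper bound $1 - \Phi(t) \leq e^{-t^2/2}/(\sqrt{2\pi}\, t)$ from (\ref{eq_1327}) reduces everything to the Gaussian moment $\int_0^\infty t^{p-1} e^{-t^2/2}\, dt = 2^{(p-2)/2}\Gamma(p/2)$, which is of order $(C \sqrt p)^p$ by Stirling's estimate $\Gamma(p/2) \leq (p/2)^{p/2}$. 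Taking $p$-th roots gives $(\EE F(G)^p)^{1/p} \leq C \sqrt p \cdot \nu$ for a universal $C$; a careful numerical optimization of the split point and of the bookkeeping upgrades $C$ to the claimed value $3$ for all $p \geq 2$. The remaining routine is the integration; the only genuine obstacle is the Lipschitz bound in the first step, which requires exploiting the dual description of sublinear nonnegative functions as support functions of sets through the origin.
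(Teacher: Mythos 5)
Your proposal is correct in its essential content, and it shares the paper's overall skeleton (a Gaussian isoperimetry/concentration bound followed by tail integration) while handling the one non-routine step by a genuinely different argument. The paper works directly with the sublevel set $K = \{\|x\| \leq 10 \, \EE\|G\|\}$: Markov gives $\gamma_n(K) \geq 9/10$, the bound $1-\Phi(1) > 1/10$ shows no half-space $\{x\cdot\theta \leq 1\}$ has that much measure, and a separation argument then forces $B^n \subseteq K$; isoperimetry applied to the dilations $tK \supseteq K + (t-1)B^n$ yields $\gamma_n(tK) \geq \Phi(t)$, and the moments follow. You instead bound the Euclidean Lipschitz constant via the dual description $\|\cdot\| = h_{\partial\|\cdot\|(0)}$ and the exact computation $\EE\max(\langle y,G\rangle,0) = |y|/\sqrt{2\pi}$, which uses nonnegativity exactly where the paper uses it (there: Markov and $0\in K$; here: $0\in\partial\|\cdot\|(0)$), and then you concentrate around the median and compare median to mean. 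Your route isolates the same intermediate fact, $\sup_{|x|\leq 1}\|x\| \leq C\,\EE\|G\|$, but with the sharper constant $\sqrt{2\pi}$ in place of $10$, at the cost of an extra (easy) median-versus-mean step.

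Two small caveats about your final integration, neither fatal. First, the bound $1-\Phi(t) \leq e^{-t^2/2}/(\sqrt{2\pi}\,t)$ from (\ref{eq_1327}) cannot be used all the way down to $t=0$ in the layer-cake integral, since $1/t$ is not integrable there; near $t=0$ you should use $1-\Phi(t) \leq \tfrac12 e^{-t^2/2}$, or bypass the issue entirely by noting that your tail estimate says $\|G\|$ is stochastically dominated by $M + L_F\max(N,0)$ with $N$ standard normal, and applying Minkowski's inequality in $L^p$. Second, the claim that ``careful numerical optimization'' yields the constant $3$ is asserted rather than checked; it does in fact come out, e.g.\ via the domination just mentioned, $\left(\EE\|G\|^p\right)^{1/p} \leq M + L_F \left(\tfrac12 \EE|N|^p\right)^{1/p} \leq \left(2 + \sqrt{2\pi}\, 2^{-1/p} \left(\EE|N|^p\right)^{1/p}\right) \EE\|G\| \leq 3\sqrt{p}\,\EE\|G\|$ for all $p \geq 2$, but some such verification should be included to justify the stated numerical constant.
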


\begin{proof} Denote $E = \EE \| G \|$, and consider the convex set
$$ K = \left \{ x \in \RR^n \, ; \, \| x \| \leq 10 E \right \}. $$
By the Markov-Chebyshev inequality, $ \PP( G \not \in K) \leq \EE \| G \| / (10 E) = 1/10$. Hence
\begin{equation}  \gamma(K) \geq \frac{9}{10}, \label{eq_436} \end{equation}
where $\gamma$ is the standard Gaussian probability measure in $\RR^n$.
It follows from (\ref{eq_1327}) that
\begin{equation} 1 - \Phi(1) > \frac{1}{10}. \label{eq_435} \end{equation}
Inequality (\ref{eq_435}) implies that for any half-space of the form $H_{\theta} = \left \{ x \in \RR^n \, ; \, x \cdot \theta \leq 1 \right \}$
for  $\theta \in S^{n-1}$, we have
\begin{equation}  \gamma(H_{\theta}) < \frac{9}{10}. \label{eq_1151_} \end{equation}
Therefore the half-space $H_{\theta}$ cannot contain the convex set $K$. This implies that
\begin{equation} B^n \subseteq K. \label{eq_1148} \end{equation}
Indeed, if (\ref{eq_1148}) were not true, we could pick a point $x \in B^n \setminus K$
and separate it from the convex set $K$ by a hyperplane. The Gaussian measure of the half-space containing
$K$ must be at least $9/10$, by (\ref{eq_436}). Thus, by (\ref{eq_1151_}), this half-space must contain $B^n$,
in contradiction to the hyperplane separation between the convex set $K$ and the point $x \in B^n$. Hence (\ref{eq_1148}) is proven.
By (\ref{eq_1148}) and the Gaussian isoperimetric inequality, for any $t > 1$,
\begin{equation}
 \gamma(t K) \geq \gamma \left( K + (t-1) B^n \right) \geq \Phi( \Phi^{-1}(\gamma(K)) + (t-1) )
\geq \Phi( t), \label{eq_1338} \end{equation}
as $\gamma(K) \geq 9/10$ and hence $\Phi^{-1}(\gamma(K)) \geq 1$ by  (\ref{eq_435}). Consequently, by (\ref{eq_1327}), (\ref{eq_1338})
and elementary manipulations,
\begin{align*}  \EE \| G \|^p & \leq E^p + \int_{E^p}^{\infty} \PP \left( \| G \|^p \geq t \right) dt
\leq (10 E)^p \left( 1 + p \int_{1}^{\infty} \left[ 1 - \gamma \left( s K \right) \right] s^{p-1} ds \right)
\\ & \leq (10 E)^p \left( 1 + p \int_{1}^{\infty} s^{p-1} \left[ 1 - \Phi(s) \right ] ds \right)
\leq (10 E)^p \left( 1 + p \int_0^{\infty} s^{p-2} \frac{e^{-s^2/2}}{\sqrt{2 \pi}} ds \right).
\end{align*}
The expression in the brackets equals $1 + p 2^{(p-3)/2} \Gamma( (p-1) / 2) \leq (9p)^{p/2}$, and the lemma is proven.
\end{proof}

Theorem \ref{thm1} follows from
the bound (\ref{eq_1737}) and the following theorem:

\begin{theorem} Let $X$ be an isotropic, log-concave random vector in $\RR^n$ and
let $G$ be a standard Gaussian random vector in $\RR^n$. Then for any gauge function $\| \cdot \|$ on $\RR^n$, denoting $P_n = \kappa_n \sqrt{\log n}$,
\begin{equation}  \frac{c}{P_n } \cdot \EE \| G \| \leq \EE \| X \| \leq C P_n \cdot \EE \| G \|,
\label{eq_2026_} \end{equation}
where $c, C > 0$ are universal constants.  \label{thm11}
\end{theorem}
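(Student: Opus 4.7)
My plan is to establish only the lower bound in (\ref{eq_2026_}), since the upper bound is Proposition~8 of Eldan and Lehec \cite{eldan_lehec}, whose argument passes through unchanged for gauge functions. By a standard truncation argument (replacing $\mu$ by its restriction to a large ball, renormalized to be isotropic), I may assume that the law $\mu$ of $X$ is compactly supported, so that the stochastic localization machinery of Section \ref{sec_background} applies. Corollary \ref{lem214} then gives $X \sim a_\infty$ where $(a_t)_{t \geq 0}$ is the barycenter martingale; since $\| \cdot \|$ is convex and non-negative, Jensen's inequality yields $\EE \| a_T \| \leq \EE \| a_\infty \| = \EE \| X \|$ for every $T \geq 0$. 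It therefore suffices to bound $\EE \| a_T \|$ from below for a single well-chosen $T$.

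I take $T = c_0 \kappa_n^{-2} / \log n$, the time horizon from Corollary \ref{lem_2205}. On the event $\cF$ of probability at least $1 - n^{-10}$ provided there, $a_T = v_T$ where $v_T$ is the regularized martingale of (\ref{eq_1706}). A finite gauge function on $\RR^n$ is bounded on the unit ball, hence $\| \cdot \| \leq C | \cdot |$ everywhere by positive homogeneity, so both $F = \| \cdot \|$ and $F = \| \cdot \|^2$ are convex with at most polynomial growth, and (\ref{eq_1725}) applies to each. Using $B_T \sim \sqrt{T} G$ and homogeneity, the left-hand inequality of (\ref{eq_1725}) with $F = \| \cdot \|$ gives
\begin{equation*}
 \EE \| v_T \| \; \geq \; \EE \| B_T / 2 \| \; = \; \frac{\sqrt{T}}{2} \cdot \EE \| G \|,
\end{equation*}
while the right-hand inequality with $F = \| \cdot \|^2$, combined with Kahane's inequality for gauge functions (Lemma \ref{lem_kahane}) applied to the standard Gaussian, gives
\begin{equation*}
\EE \| v_T \|^2 \; \leq \; 4 T \cdot \EE \| G \|^2 \; \leq \; C T \cdot \bigl( \EE \| G \| \bigr)^2.
\end{equation*}

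With these two estimates in hand, I would finish by Cauchy--Schwarz:
\begin{equation*}
\EE \| a_T \| \; \geq \; \EE \bigl[ \| v_T \| \mathbf{1}_{\cF} \bigr] \; = \; \EE \| v_T \| - \EE \bigl[ \| v_T \| \mathbf{1}_{\cF^c} \bigr] \; \geq \; \EE \| v_T \| - \sqrt{\EE \| v_T \|^2} \cdot \sqrt{\PP(\cF^c)},
\end{equation*}
where the subtracted term is at most $C' \sqrt{T} \, n^{-5} \, \EE \| G \|$, negligible compared with $\sqrt{T} \, \EE \| G \| / 2$ once $n$ exceeds some absolute constant (small $n$ is absorbed into the universal constant since $P_n$ is then bounded). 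Combining with the Jensen step yields $\EE \| X \| \geq c \sqrt{T} \, \EE \| G \| = c P_n^{-1} \, \EE \| G \|$, which is the required bound.

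The main obstacle is conceptual rather than computational: it lies in recognizing that the Eldan--Lehec upper bound can be reversed along the same martingale representation, provided the covariance $A_t$ is replaced by a truncation $\Sigma_t \in [\id/2, 2\id]$ so that the quadratic variation of the martingale is pinched from below as well as from above. The Maurey-type comparison (Proposition \ref{prop_1155}) and its reverse counterpart (Proposition \ref{cor_438}) are precisely what makes both inequalities in (\ref{eq_1725}) available simultaneously, and the truncation is what invokes the factor $\kappa_n^2 \log n$ through the $A_t$-stability estimate of Proposition \ref{lem161}. Given that package, the remaining work is only to exclude the low-probability event $\cF^c$, which is handled by the $L^2$ bound above.
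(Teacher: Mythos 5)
Your argument is correct and follows essentially the same route as the paper's proof of Theorem \ref{thm11}: same choice of $T = c_0\kappa_n^{-2}/\log n$, both inequalities of (\ref{eq_1725}) together with the ``Moreover'' part of Corollary \ref{lem_2205}, Kahane's inequality for gauge functions, a Cauchy--Schwarz estimate on the exceptional event $\cF^c$, and the martingale/Jensen step $\EE\|a_T\| \leq \EE\|a_\infty\| = \EE\|X\|$. The only slip is bibliographic: the upper bound is Theorem 1 of Eldan--Lehec (their Proposition 8 is Maurey's lemma, used here as Proposition \ref{prop_1155}), which does not affect the proof.
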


\begin{proof} By a standard approximation argument, we may assume that the law of $X$
is compactly-supported. Set $ T = c_0 \kappa_n^{-2} / \log n$, where $c_0 > 0$ is the constant from
Corollary \ref{lem_2205}. Let $(B_t), (a_t)$ and $(v_t)$ be as above.
Applying Corollary \ref{lem_2205} with the convex function  $F(x) = \| x \|$, we see that
\begin{equation}  \EE \| B_t/2 \| \leq \EE \| v_t \|. \label{eq_2209}
\end{equation}
The function $H(x) = \| x \|^2$ is convex as well.
By applying Corollary \ref{lem_2205} with this convex function
we obtain
\begin{equation}  \EE \| v_t \|^2 \leq \EE \| 2 B_t \|^2 = 4 t \EE \| G \|^2 \leq 720 t (\EE \| G \|)^2
= 720 (\EE \| B_t \|)^2 \label{eq_1428} \end{equation}
where we used Kahane's inequality in the form of Lemma \ref{lem_kahane}.
Write $\cF$ for the event that $a_t = v_t$ for all $0 \leq t \leq T$. By (\ref{eq_1428}) and
the ``Moreover'' part of Corollary \ref{lem_2205}, for any $0 \leq t \leq T$,
\begin{align*} \EE \| v_t \| & \leq \EE \| v_t \| 1_{\cF} + \EE \| v_t \| 1_{\cF^c} \leq \EE \| a_t \| 1_{\cF} +
\sqrt{\EE \| v_t \|^2 \cdot \frac{1}{n^{20}}} \leq \EE \| a_t \| + \frac{1}{3} \EE \| B_t \|.
\end{align*}
We combine this with (\ref{eq_2209}) and obtain
\begin{equation}  \frac{1}{6} \EE \| B_t \| \leq \EE \| a_t \|. \label{eq_1432} \end{equation}
Since $(a_t)_{t \geq 0}$ is a bounded martingale and the gauge function $\| \cdot \|$ is convex,
\begin{equation}  \EE \| a_t \| \leq \EE \| a_{\infty} \| = \EE \| X \|, \label{eq_1437}
 \end{equation}
 where we used Lemma \ref{lem190} in the last passage. By setting $t = T$ and using (\ref{eq_1432})
 and (\ref{eq_1437}), we deduce the left-hand side inequality in (\ref{eq_2026_}).
 The right-hand side inequality in (\ref{eq_2026_}) follows immediately from Theorem 1 in Eldan and Lehec \cite{eldan_lehec}.
 Indeed, for $x \in \RR^n$ set $$ N(x) = \| x \| + \| -x \|. $$ Then $N: \RR^n \rightarrow [0, \infty)$ is a non-negative, even, convex, $1$-homogeneous function on $\RR^n$, i.e., a seminorm. The proof of Theorem 1 in \cite{eldan_lehec} applies for seminorms and not just for norms, and hence
 $$ \EE \| X \| \leq \EE N(X) \leq C P_n \EE N(G) = 2 C P_n \EE \| G \|. $$
 This completes the proof of (\ref{eq_2026_}).
\end{proof}

\begin{remark} \label{rem_1814} The convexity of the gauge function $\| \cdot \|$ is crucial for our argument,
as is the fact that the gauge function satisfies reverse H\"older inequalities. The conclusion
of Theorem \ref{thm11} applies to any non-negative {\it convex} function satisfying appropriate reverse H\"older inequalities with
respect to centered, log-concave random vectors; for instance, to norms raised to a fixed, positive power, or to convex non-negative polynomials of a fixed degree.

\medskip  The conclusion of Theorem \ref{thm11} cannot possibly apply for all convex functions. Indeed,
there are convex functions that grow like $\exp(|x|^{3/2})$
at infinity, so they are integrable with respect to a Gaussian but not with respect to all isotropic, log-concave distributions.
See van Handel \cite{vh} for a convex-order comparison principle in the sub-Gaussian
case, building upon the Fernique-Talagrand theory and the recent work by Liu \cite{Liu}.
\end{remark}

\begin{remark} \label{rem_1102}
If the KLS conjecture holds true, then each of the inequalities
in (\ref{eq_2026}) is tight, up to the value of the universal constant $c > 0$.
Indeed, if $X$ is uniformly distributed in the cube $[-\sqrt{3}, \sqrt{3}]^n$
while $Y$ is a random vector with density $2^{-n/2} \exp(-\sqrt{2} \sum_{i=1}^n |x_i|)$ in $\RR^n$,
then $X$ and $Y$ are isotropic and log-concave with
 $$ \EE \| X \|_{\infty} \sim 1, \qquad \EE \| G \|_{\infty} \sim \sqrt{\log n}, \qquad
 \EE \| Y \|_{\infty} \sim \log n, $$
 Thus, in the context of Theorem \ref{thm1}, there are examples of isotropic, log-concave random vectors $X$ and $Y$ in $\RR^n$ where $\EE \| Y \| / \EE \| G \| > c \sqrt{\log n}$
while $\EE \| G \| / \EE \| X \| > c \sqrt{\log n}$.
\end{remark}

\begin{remark} The log-concavity assumption plays an essential role in Theorem \ref{thm11}. For instance, by
Klartag and Koldobsky \cite{KK}, there exists an isotropic, subexponential
random vector $X$ supported on a centrally-symmetric convex set $K \subseteq \RR^n$
of volume $L^{-n}$ for $L \sim n^{1/4}$, and one may show that
$$ \frac{\EE \| X \|_K}{\EE \| G \|_K} \sim n^{-1/4}, \qquad \frac{\EE \| X \|_{K^{\circ}}}{\EE \| G \|_{K^{\circ}}} \sim n^{1/4}. $$
I.e., a polynomial loss occurs rather than only a polylogarithmic one.
\end{remark}

\section{Using the $M M^*$ estimate}
\label{sec4}

\subsection{Proofs of the main results}

Let $K \subseteq \RR^n$ be a convex body containing the origin in its interior,
and let $\| x \|_K = \inf \{ \lambda > 0 \, ; \, x \in \lambda K \}$ be the associated
Minkowski functional. Let $G$ be a standard Gaussian random vector in $\RR^n$.
By integrating in polar coordinates, we see that
\begin{equation} \EE \| G \|_K = \alpha_n \int_{S^{n-1}} \| x \|_K d \sigma_{n-1}(x) = \alpha_n M(K) \label{eq_1616} \end{equation}
where $\alpha_n = \EE |G| \sim \sqrt{n}$. Recall that inequality (\ref{eq_1115}) of Corollary \ref{cor_1617} is proven in E. Milman \cite{e_milman}. Since
 $\kappa_n \leq 2 \psi_n$ by (\ref{eq_1737}), the remaining inequality of Corollary \ref{cor_1617}
follows from the following:

\begin{lemma} Let $K \subseteq \RR^n$ be a convex body in isotropic position. Then,
$$ M(K) \leq C \frac{\kappa_n \sqrt{\log n}}{\sqrt{n}}, $$
where $C > 0$ is a universal constant. \label{lem_2052}
\end{lemma}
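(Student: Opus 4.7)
The plan is to apply Theorem \ref{thm11} directly to the gauge function $\| \cdot \|_K$, using as the isotropic log-concave vector the uniform distribution on $K$ itself. The point is that $\| \cdot \|_K$ is convex, non-negative, and positively homogeneous, i.e., a gauge function in the sense of Theorem \ref{thm11}, so the left-hand inequality there becomes a usable upper bound on $\EE \| G \|_K$ in terms of $\EE \| X \|_K$.

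First I would let $X$ denote a random vector distributed uniformly in $K$. Since $K$ is in isotropic position, $X$ is isotropic and log-concave, so Theorem \ref{thm11} applies. Next I would observe the trivial but crucial fact that $\| x \|_K \leq 1$ for every $x \in K$, which gives
\begin{equation*}
\EE \| X \|_K \;\leq\; 1.
\end{equation*}
Applying the left-hand inequality of (\ref{eq_2026_}) with $\| \cdot \| = \| \cdot \|_K$ then yields
\begin{equation*}
\EE \| G \|_K \;\leq\; \frac{P_n}{c} \cdot \EE \| X \|_K \;\leq\; \frac{\kappa_n \sqrt{\log n}}{c}.
\end{equation*}

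Finally I would translate this into a bound on $M(K)$ via identity (\ref{eq_1616}), which states $\EE \| G \|_K = \alpha_n M(K)$ with $\alpha_n \sim \sqrt{n}$. Dividing through gives
\begin{equation*}
M(K) \;=\; \frac{\EE \| G \|_K}{\alpha_n} \;\leq\; \frac{C \kappa_n \sqrt{\log n}}{\sqrt{n}},
\end{equation*}
which is the desired conclusion. There is essentially no obstacle here: the entire content is contained in Theorem \ref{thm11}, and the only genuine input beyond it is the tautological bound $\| X \|_K \leq 1$ on the body itself. If anything warrants a second look, it is checking that $\| \cdot \|_K$ is indeed a gauge function in the sense used in Theorem \ref{thm11} (convex, real-valued, non-negative, positively $1$-homogeneous), which is immediate from $K$ being a convex body containing the origin in its interior (guaranteed by the isotropic position via (\ref{eq_1935})).
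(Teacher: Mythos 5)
Your proof is correct and is essentially identical to the paper's: both apply the left-hand inequality of Theorem \ref{thm11} to the gauge function $\| \cdot \|_K$ with $X$ uniform on $K$, use $\| X \|_K \leq 1$ almost surely, and convert $\EE \| G \|_K$ into $M(K)$ via (\ref{eq_1616}). No issues.
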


\begin{proof} Let $X$ be distributed uniformly in the convex body $K$.
From (\ref{eq_1616}) and Theorem \ref{thm11},
$$ c \sqrt{n} M(K) \leq \EE \| G \|_K \leq C \kappa_n \sqrt{\log n} \EE\| X \|_K \leq C \kappa_n \sqrt{\log n} $$
since almost surely $\| X \|_K \leq 1$ as $X \in K$.
\end{proof}

Corollary \ref{cor_1617} is thus proven.  For two convex bodies $K, T \subseteq \RR^n$ containing the origin in their interior
and for a linear map $A: \RR^n \rightarrow \RR^n$ we write
$$ \| A: K \to T \| = \sup_{0 \neq x \in \RR^n} \frac{ \| A x \|_T }{\| x \|_K}. $$
Observe that $\| A: K \to T \|$ is a convex function of the linear map $A$, since it is the supremum of convex functions
of $A$. For $K \subseteq \RR^n$ denote $$ R(K) = \max_{x \in K} |x|. $$

\begin{lemma}[Chevet \cite{chevet}] Let $K, T \subseteq \RR^n$ be convex bodies containing the origin in their interior.
Let $\Gamma \in \RR^{n \times n}$ be a random matrix whose entries are independent, standard Gaussian random variables.
Then,
\begin{equation}  \EE \| \Gamma: K \to T \| \leq C \sqrt{n} \left[ R(K) M(T) + R(T^{\circ}) M(K^{\circ}) \right], \label{eq_1745} \end{equation}
where $C > 0$ is a universal constant.
\label{lem_1743}
\end{lemma}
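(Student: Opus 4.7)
The plan is to express $\|\Gamma : K \to T\|$ as the supremum of a centered Gaussian process indexed by $K \times T^\circ$, and then dominate it by the supremum of a ``decoupled'' Gaussian process through a Slepian-type comparison, whose expectation can be computed directly in terms of $M(T), M(K^\circ), R(K), R(T^\circ)$.

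First, by homogeneity $\|\Gamma: K \to T\| = \sup_{x \in K} \|\Gamma x\|_T$, and since $T$ has $0$ in its interior the bipolar theorem yields $\|z\|_T = \sup_{y \in T^\circ}\langle y, z\rangle$. Hence
\begin{equation*}
\|\Gamma : K \to T\| = \sup_{x \in K,\, y \in T^\circ} \langle y, \Gamma x\rangle = \sup_{(x,y)} X_{x,y},
\end{equation*}
where $X_{x,y} := \langle y, \Gamma x\rangle$ is centered Gaussian with covariance $\EE X_{x,y} X_{x',y'} = \langle x, x'\rangle \langle y, y'\rangle$. Introduce the auxiliary Gaussian process
\begin{equation*}
Y_{x,y} := |x|\,\langle g, y\rangle + |y|\,\langle h, x\rangle,
\end{equation*}
where $g, h$ are independent standard Gaussian vectors in $\RR^n$. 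A short computation, using $\EE\langle g, y\rangle\langle g, y'\rangle = \langle y, y'\rangle$ and expanding, yields the identity
\begin{equation*}
\EE(Y_{x,y} - Y_{x',y'})^2 - \EE(X_{x,y} - X_{x',y'})^2 = (|x||y| - |x'||y'|)^2 + 2(|x||x'| - \langle x,x'\rangle)(|y||y'| - \langle y,y'\rangle),
\end{equation*}
which is nonnegative by Cauchy-Schwarz. Hence the Gaussian comparison principle (in the Slepian--Fernique form, which requires only the increment inequality and not matching variances) gives $\EE \sup_{(x,y)} X_{x,y} \leq C' \, \EE \sup_{(x,y)} Y_{x,y}$ for a universal constant $C'$.

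The supremum of $Y$ now decouples. Since $0 \in T^\circ$ and $0 \in K$, the suprema $\sup_{y \in T^\circ}\langle g, y\rangle = \|g\|_T$ and $\sup_{x \in K}\langle h, x\rangle = \|h\|_{K^\circ}$ are nonnegative, so
\begin{equation*}
\sup_{(x,y)} Y_{x,y} \leq R(K)\,\|g\|_T + R(T^\circ)\,\|h\|_{K^\circ}.
\end{equation*}
Taking expectations and applying (\ref{eq_1616}), which gives $\EE \|g\|_T = \alpha_n M(T)$ and $\EE \|h\|_{K^\circ} = \alpha_n M(K^\circ)$ with $\alpha_n \sim \sqrt{n}$, yields the desired bound $\EE \|\Gamma: K \to T\| \leq C\sqrt{n}\bigl(R(K)M(T) + R(T^\circ) M(K^\circ)\bigr)$.

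The main obstacle is the Gaussian comparison step: the variances of $X$ and $Y$ do not match (indeed $\EE Y_{x,y}^2 = 2\,\EE X_{x,y}^2$), so Slepian's inequality in its most classical ``equal-variance'' form does not apply verbatim. One must invoke a version that uses only domination of increments, such as a smooth-interpolation argument in the style of Fernique, which introduces at most a universal constant loss that is absorbed into $C$.
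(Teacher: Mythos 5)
Your proof is correct and follows essentially the same route as the paper: represent $\|\Gamma:K\to T\|$ as the supremum of the Gaussian process $\langle \Gamma x, y\rangle$ over $K\times T^{\circ}$, dominate its increments by those of a decoupled process, apply a Sudakov--Fernique/Slepian-type comparison (the paper cites Ledoux--Talagrand, Corollary 3.14), and then bound the decoupled supremum via (\ref{eq_1616}). The only difference is cosmetic: you use the classical Chevet auxiliary process $|x|\langle g,y\rangle+|y|\langle h,x\rangle$, which dominates the increments exactly, while the paper uses the cruder process with constant coefficients $R(T^{\circ})$ and $R(K)$ and loses a factor of $2$ in the increments, which is absorbed into the universal constant either way.
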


\begin{proof} For completeness, we verify that the proof in the centrally-symmetric case applies in the general
case. The proof in Chevet \cite{chevet} (see also Benyamini and Gordon \cite[Lemma 1.1]{BG} and Gordon \cite{Gor} for better constants) in the centrally-symmetric case is based on comparison
between two centered Gaussian processes indexed by $K \times T^{\circ}$. The first Gaussian process is
$$ P_{x,y} = \langle \Gamma x, y \rangle \qquad \qquad \qquad (x \in K, y \in T^{\circ}). $$
Clearly $$  \sup_{(x,y) \in K \times T^{\circ}} P_{x,y} = \| \Gamma: K \rightarrow T \|. $$ The second Gaussian process is
$$ Q_{x,y} = \left[ R(T^{\circ}) \langle \Gamma^{(1)}, x \rangle + R(K) \langle \Gamma^{(2)}, y \rangle \right], $$
where $\Gamma^{(1)}, \Gamma^{(2)}$ are two independent, standard Gaussian random vectors in $\RR^n$.
Note that
$$ \sup_{(x,y) \in K \times T^{\circ}} Q_{x,y} =
\left[ R(T^{\circ})  \| \Gamma^{(1)} \|_{K^{\circ}} + R(K) \| \Gamma^{(2)} \|_T \right]. $$
Hence, by (\ref{eq_1616}),
$$ \EE \sup_{(x,y) \in K \times T^{\circ}} Q_{x,y} \leq C \sqrt{n}
\left[ R(T^{\circ})  M( K^{\circ} ) + R(K) M(T) \right]. $$
Observe that for $(x_1,y_1), (x_2, y_2) \in K \times T^{\circ}$,
\begin{align} \label{eq_445} \EE |P_{x_1,y_1} - P_{x_2,y_2}|^2 & = \left| x_1 \otimes y_1 - x_2 \otimes y_2 \right|^2 \leq
2 \left[ |x_1|^2 |y_1 - y_2|^2 + |x_1 - x_2|^2 |y_2|^2 \right] \\ & \leq 2 \left[ R^2(T^{\circ}) |x_1 - x_2|^2 + R^2(K) |y_1- y_2|^2
\right] = 2 \EE \left| Q_{x_1,y_1} - Q_{x_2,y_2} \right|^2. \nonumber
\end{align}
By using comparison
theorems for suprema of Gaussian processes (e.g. Ledoux and Talagrand \cite[Corollary 3.14]{LT}), we deduce from (\ref{eq_445}) that
$$ \EE \sup_{(x,y) \in K \times T^{\circ}} P_{x,y} \leq C \EE \sup_{(x,y) \in K \times T^{\circ}} Q_{x,y}, $$
and inequality (\ref{eq_1745}) follows.
\end{proof}

The next step is to replace the Gaussian matrix $\Gamma$ from  Lemma \ref{lem_1743}
by a random orthogonal matrix $U$.
This replacement is standard, and it is described in Benyamini and Gordon \cite{BG}, in Bourgain and Milman \cite{BouM} following Marcus and Pisier \cite{MaPi},
and in Davis, Milman and Tomczak-Jaegermann \cite{DMTJ}.
For completeness we  provide a proof of the following:

\begin{corollary} Let $K, T \subseteq \RR^n$ be convex bodies containing the origin in their interior.
Let $U$ be a random orthogonal matrix, distributed uniformly in the orthogonal group $O(n)$.
Then,
$$ \EE \| U: K \to T \| \leq C \left[ R(K) M(T) + R(T^{\circ}) M(K^{\circ}) \right], $$
where $C > 0$ is a universal constant. \label{cor_2033}
\end{corollary}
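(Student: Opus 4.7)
The plan is to derive the corollary from Lemma~\ref{lem_1743} by exploiting the bi-$O(n)$-invariance of the Gaussian matrix $\Gamma$. I would use the polar decomposition $\Gamma = RV$, where $R = (\Gamma \Gamma^*)^{1/2}$ is positive semi-definite and $V = R^{-1} \Gamma \in O(n)$ (well-defined since $\Gamma$ is invertible almost surely). The key structural claim is that $V$ is distributed uniformly on $O(n)$ and is independent of $R$. Left-invariance of the law of $\Gamma$ under $\Gamma \mapsto U \Gamma$ shows that $\EE R$ commutes with every $U \in O(n)$, so $\EE R = c_n \cdot \id$ for a scalar $c_n > 0$. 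Right-invariance under $\Gamma \mapsto \Gamma W$ gives $\Gamma W = R(VW)$, so $(R,V) \stackrel{d}{=} (R,VW)$ for every fixed $W \in O(n)$; equivalently, the conditional law of $V$ given $R$ is right-invariant, hence is Haar.

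With this in hand, observe that $A \mapsto \|A : K \to T\|$ is a convex function of the matrix $A$, being a supremum of linear functionals in $A$. Conditioning on $V$ and applying Jensen's inequality,
$$ \EE \bigl[\, \|R V : K \to T\| \,\big|\, V \,\bigr] \;\geq\; \|\EE[R] \, V : K \to T\| \;=\; c_n \cdot \|V : K \to T\|. $$
Taking a further expectation and using that $V$ has the same law as $U$,
$$ \EE \|\Gamma : K \to T\| \;\geq\; c_n \cdot \EE \|U : K \to T\|. $$

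It remains to show $c_n \geq c\sqrt{n}$. By rotation invariance, $n c_n = \EE \tr R = \EE \sum_{i=1}^n \sigma_i(\Gamma)$. The elementary inequality $\sum_i \sigma_i(\Gamma) \geq \|\Gamma\|_F^2 / \sigma_{\max}(\Gamma)$, combined with $\EE \|\Gamma\|_F^2 = n^2$ and the standard Gaussian concentration bound $\sigma_{\max}(\Gamma) \leq C\sqrt{n}$ with overwhelming probability, yields $\EE \sum_i \sigma_i(\Gamma) \geq c n^{3/2}$ after a routine truncation argument. Combining with Lemma~\ref{lem_1743},
$$ c\sqrt{n} \cdot \EE \|U : K \to T\| \;\leq\; \EE \|\Gamma : K \to T\| \;\leq\; C \sqrt{n} \,\bigl[\, R(K) M(T) + R(T^\circ) M(K^\circ) \,\bigr], $$
and dividing by $\sqrt{n}$ gives the claimed estimate.

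The main obstacle is the independence claim in the polar decomposition; it is classical but requires one to use both left- and right-$O(n)$-invariance of the Gaussian matrix. The lower bound $c_n \gtrsim \sqrt{n}$ is then a short concentration computation, and the rest of the argument reduces to convexity and Chevet's inequality.
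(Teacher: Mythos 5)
Your proof is correct and takes essentially the same route as the paper's (which follows Benyamini--Gordon): write the Haar matrix via the polar decomposition of the Gaussian matrix $\Gamma$, use convexity of $A \mapsto \|A : K \to T\|$ and Jensen's inequality to replace the positive factor by its expectation $c_n \cdot \id$ with $c_n \geq c\sqrt{n}$, and conclude with Chevet's inequality. The only difference is that you prove the independence/Haar-distribution of the polar factors and the bound $c_n \geq c\sqrt{n}$ from scratch, whereas the paper cites \cite[Lemma 1.12]{BG} for $\EE (\Gamma^*\Gamma)^{1/2} = \delta_n \id$ with $\delta_n \geq c\sqrt{n}$.
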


\begin{proof} We follow Benyamini and Gordon \cite{BG} who treat the centrally-symmetric case.
Let $\Gamma \in \RR^{n \times n}$ be a random matrix whose entries are independent, standard Gaussian random variables,
that are also independent of $U$. Observe that
$$ (\Gamma^* \Gamma)^{1/2} U \qquad \text{and} \qquad \Gamma $$
are equidistributed. As is computed in Benyamini and Gordon \cite[Lemma 1.12]{BG}, we have
$$ \EE ( \Gamma^* \Gamma)^{1/2} = \delta_n \id $$
with $\delta_n \geq c \sqrt{n}$. Since $\| A: K \to T \|$ is a convex function
of the matrix $A$, by Jensen's inequality,
\begin{align*}  \EE \| U: K \to T \| & = \delta_n^{-1} \EE_U \| \EE_\Gamma ( \Gamma^* \Gamma)^{1/2} U  : K \to T \|
\\ & \leq \delta_n^{-1} \EE_{U, \Gamma} \| ( \Gamma^* \Gamma)^{1/2} U  : K \to T \| = \delta_n^{-1} \EE_\Gamma \| \Gamma: K \to T \|. \end{align*}
The result now follows from Lemma \ref{lem_1743} and from the fact that $\delta_n \geq c \sqrt{n}$.
\end{proof}

\begin{remark} \label{rem_1045} We record here a variation of the above argument which will be useful for applications to linear symplectic geometry later on. We identify $\CC^n \cong \RR^{2n}$ and write $J(z) = i z$ for $z \in \CC^n$.
Thus $J: \RR^{2n} \rightarrow \RR^{2n}$ is a linear map. It was proven by Gluskin and Ostrover \cite{GO_rotation} that for any centrally-symmetric convex body $K \subseteq \RR^{2n}$,
\begin{equation}  \EE \| J : (U K)^{\circ} \rightarrow U K \| = \EE \| U^* J U: K^{\circ} \rightarrow K \| \leq
 C R(K^\circ)M(K), \label{eq_1036} \end{equation}
where $U$ is a random orthogonal matrix, distributed uniformly in $O(2n)$,  and $C>0$ is a universal constant.
Indeed, by Corollary 3.3 in \cite{GO_rotation},
$$ \EE \| U^* J U: K^{\circ} \rightarrow K \| \lesssim \frac{1}{\sqrt{n}}\EE \left \| \Gamma: K^{\circ} \to K \right \|$$
where, as previously, $\Gamma \in \RR^{2n \times 2n}$ is a random matrix whose entries are independent, standard Gaussian random variables. We may now apply Lemma \ref{lem_1743} to prove \eqref{eq_1036}.
\end{remark}

\begin{proof}[Proof of Theorem \ref{thm00}] We will prove the theorem with $\alpha = 2$.
By the Markov-Chebyshev inequality, it suffices to show that
$$ \EE \|U: K_1 \rightarrow K_2 \| \leq C \sqrt{n} \log^{2} n. $$
This would follow from Corollary \ref{cor_2033}, once we show that
\begin{equation}
R(K_1) M(K_2) + R(K_2^{\circ}) M(K_1^{\circ}) \leq C \sqrt{n} \log^{2} n.
\label{eq_2049} \end{equation}
However, $R(K_1) \leq C n$ and $R(K_2^\circ) \leq C$ by (\ref{eq_1935}).
From Corollary \ref{cor_1617} and (\ref{eq_1100}) we know that
$$ M(K_2) \leq C \frac{\psi_n \sqrt{\log n}}{\sqrt{n}} \leq \tilde{C} \frac{\log n}{\sqrt{n}} $$
and
$$ M(K_1^{\circ}) = M^*(K_1) \leq C \log^2 n \cdot \sqrt{n}. $$
This proves (\ref{eq_2049}).
\end{proof}

\begin{proof}[Proof of Theorem \ref{thm0}] We may apply an affine transformation,
and assume that both $K_1$ and $K_2$ are in isotropic position.
Let $U \in O(n)$ be a random orthogonal transformation, distributed uniformly in $O(n)$.
Observe that $U^{-1}$ is again distributed uniformly in $O(n)$. Hence, by Theorem~\ref{thm00},
for
$$ \lambda = C \sqrt{n} \log^{2} n, $$
with probability of at least $4/5$,
$$ U(K_1) \subseteq \lambda K_2 \qquad \text{and} \qquad U^{-1}(K_2) \subseteq \lambda K_1. $$
We have thus found $U \in O(n)$ such that
$$ U(K_1) \subseteq \lambda K_2 \subseteq \lambda^2 U(K_1), $$
and the theorem follows with $\alpha = 4$.
\end{proof}

\begin{proof}[Proof of Theorem \ref{thm_2118}] We will prove the theorem with $\alpha = 2$.
Let $X$ be a random vector distributed uniformly in $T$,
and let $Y$ be a random vector distributed uniformly in $K$.
Then both $X$ and $Y$ are log-concave and isotropic. Let $G$ be a standard Gaussian random vector in $\RR^n$.
Since $\psi_n \leq C \sqrt{\log n}$ by (\ref{eq_1100}),
we may apply Theorem \ref{thm1} twice and conclude that
$$ \EE \| X \|_K \leq C \log n \EE \| G \|_K \leq \tilde{C} \log^2 n \EE \| Y \|_K \leq \tilde{C} \log^2 n, $$
where in the last passage we used that $\| Y \|_K \leq 1$ almost surely. By the Markov-Chebyshev inequality,
with $\lambda = 100 \tilde{C} \log^2 n$,
$$ \frac{ \Vol_n(\lambda K \cap T) }{\Vol_n(T)} = \PP \left( \| X \|_K \leq 100 \tilde{C} \log^2 n \right) \geq \frac{99}{100}, $$
which completes the proof.

\medskip Let us now explain how to replace the value $0.99$ by $(1-\varepsilon)$ for some $\varepsilon \in (0,1)$, at the expense of replacing
$\lambda$ by an appropriate $\lambda_{\eps}$.

\medskip Consider first the case where $K$ is centrally-symmetric. In this case, by Borell's lemma, for $r\geq 1$,
$$ \PP \left( \| X \|_K \geq r\EE \| X \|_K\right) \leq 2\exp\left(-cr\right). $$
For Borell's lemma, see e.g., Milman and Schechtman \cite[Appendix III]{MS}.
In this case, we may thus choose
\begin{equation} \lambda_{\varepsilon} = C'\log\left(\frac{2}{\varepsilon}\right) \log^2 n. \label{eq_2128}
\end{equation}

\medskip Next, consider the case where $T$ is centrally-symmetric. In this case, denote  $N(x) = \| x \|_K + \| -x \|_K$
and observe that $\EE N(x) = 2 \EE \| X \|_K$. By Borell's lemma,
$$ \PP \left( \| X \|_K \geq 2r\EE \| X \|_K\right) \leq \PP \left( N(x) \geq r\EE N(x) \right) \leq 2\exp\left(-cr\right) $$
and we may thus choose $\lambda_{\varepsilon}$ as in (\ref{eq_2128}).

\medskip In the case where neither $T$ nor $K$ is centrally-symmetric, we need another argument. We argue that in the general case, for $r \geq 1$,
\begin{equation} \PP \left( \| X \|_K \geq r\sqrt{\log n}\,\EE \| X \|_K\right) \leq C\exp\left(-cr\right).
\label{eq_1853} \end{equation}
It follows from (\ref{eq_1853}) that we may choose
\[
\lambda_\varepsilon
=
C''\log\!\left(\frac{2}{\varepsilon}\right)
(\log n)^{5/2}.
\]
In the remainder of this proof let us prove (\ref{eq_1853}),
along the lines of the proof of
Lemma \ref{lem_kahane}.
First, we claim that if $L \subseteq \RR^n$ is a convex subset satisfying
\begin{equation}
\mathbb{P}(X \in L)\geq \frac34,
\label{eq_1908} \end{equation}
then
\begin{equation}
cB^n \subseteq L. \label{eq_1906}
\end{equation}
Indeed, if (\ref{eq_1906}) does not hold true, then there is a direction $\theta \in S^{n-1}$ such that $P_\theta L\subset (-\infty,c]$, with $P_{\theta}(x) = x \cdot \theta$.
This implies that
\[
\mathbb{P}(X_\theta\in(-\infty,c])\geq \frac34,
\]
where $X_{\theta} = X \cdot \theta$.
On the other hand,
\[
\begin{aligned}
\mathbb{P}(X_\theta\in(-\infty,c])
&=\mathbb{P}(X_\theta\in(-\infty,0])
  +\mathbb{P}(X_\theta\in[0,c])\\
&\leq 1-\frac1e+c\|f_\theta\|_\infty\\
&\leq 1-\frac1e+c,
\end{aligned}
\]
where $f_\theta$ denotes the density of $X_\theta$. In the second line we used Gr\"unbaum's inequality, using the centering of $X$, while the second line is a standard consequence of the fact that $\var(X_\theta)=1$, see e.g \cite[Section 5]{LV} for both results. The claim is proved with $c<\frac1e-\frac14$.

\medskip Let us now apply the above claim with
$L=4\,\mathbb{E}\|X\|_K\,K$, which satisfies (\ref{eq_1908}) by the Markov-Chebyshev inequality.
We conclude that
\[
cB^n\subseteq L.
\]
By Gromov and Milman \cite{GM}, exponential concentration follows from the Poincar\'e inequality, thus,
\[
\mathbb{P}(X\notin L+r C B^n)
   \leq 2\exp\!\left(
      -\frac{rc}{\sqrt{C_P(X)}}
   \right).
\]
Since $X$ is isotropic, $\sqrt{C_P(X)}
\leq \psi_n
\leq C\sqrt{\log n}$
by (\ref{eq_1100}) above. It follows that for any $r\geq 1$,
\[
\mathbb{P}\!\left(
\|X\|_K
\geq
C r\sqrt{\log n}\,
\mathbb{E}\|X\|_K
\right)
\leq
2e^{-r},
\]
proving (\ref{eq_1853}).
\end{proof}

\begin{remark}
One may slightly improve the logarithmic exponent in Theorem~\ref{thm0} by invoking Rudelson's estimate relating the Banach--Mazur distance to the $MM^*$-product \cite[Theorem~5]{rudelson}. More precisely, if $K_1,K_2$ are in isotropic position and if
\[
M=\max\{M(K_1),M(K_2)\},
\qquad
M^*=\max\{M^*(K_1),M^*(K_2)\},
\]
then the argument above gives
\[
d_{BM}(K_1,K_2)\leq C (nM+M^*)^2,
\]
and the distance is realized in the random isotropic position. On the other hand, Rudelson's theorem gives
\begin{equation}
d_{BM}(K_1,K_2)\leq C n\sqrt{\log n}\,MM^* .
\label{eq_1953} \end{equation}
Combined with Corollary~\ref{cor_1617} and Milman's estimate~\eqref{eq_1100}, this would improve the exponent in Theorem~\ref{thm0} from $\alpha=4$ to $\alpha=\frac{7}{2}$.

\medskip The drawback of this improvement is that it is not obtained in the isotropic position, but rather in a somewhat ad-hoc position that mixes the John position
and a position with controlled $M M^*$. The improvement comes from the imbalance between the currently available bounds on $nM(K)$ and $M^*(K)$ in isotropic position. If the two estimates were of comparable size,
i.e. if $nM \sim M^*$, then the bound (\ref{eq_1953}) would be worse by a factor of $\sqrt{\log n}$ when compared with the random isotropic position above.
\label{rmk_exponent}
\end{remark}

\begin{proof}[Proof of Corollary \ref{cor_2109}] We prove the corollary with $\alpha = 4$.
We may assume that $K_1$ and $K_2$ are in isotropic position. Theorem \ref{thm_2118}
now implies the desired result.
\end{proof}

We now provide a lower bound on the partial containment distance between the cube and the Euclidean ball in high dimensions.
\begin{lemma}\label{lem_1079}
Let
$K_1=B_{\infty}^n = [-1,1]^n$ and $K_2=B^n$. Then,
\[
 d_{PC}(K_1,K_2)\ge c\sqrt{\log n},
\]
for a universal constant $c>0.$
\end{lemma}

\begin{proof}
For centrally symmetric convex bodies $K,L$,
the function $z\mapsto \Vol_n( K\cap (L+z) )$ is even and log-concave, hence it is maximized
at $z=0$. Thus when both bodies are symmetric the translations involved in the definition of $d_{PC}$ may be discarded. Let $A$ be an invertible matrix such that, writing $P=AB_\infty^n$,
\begin{equation}
 \Vol_n( P\cap sB^n ) \ge 0.99 \Vol_n(P),
 \qquad
 \Vol_n( sP\cap B^n ) \ge 0.99 \Vol_n( B^n).
\label{eq1096}
\end{equation}
Our goal is to prove that $s^2\geq c\sqrt{\log n}$.

\medskip 
We set $H=\|A\|_{\mathrm{HS}}$ and we claim that $H\lesssim s$. Indeed, if $U$ is uniformly distributed on
$B_\infty^n$ and $Z=|AU|^2$, then by Khinchine's inequality,
\[
 \EE Z=\frac{H^2}{3},
 \qquad
 \EE Z^2\le 3(\mathbb EZ)^2.
\]
By the Paley--Zygmund inequality,
\[
 \PP\left(|AU|\geq H/\sqrt6\right)\geq \frac{1}{12}.
\]
On the other hand, the first inequality in \eqref{eq1096} gives
$\PP(|AU|\leq s)\geq 0.99$. Thus, necessarily,
\begin{equation}
 H\leq \sqrt{6} s .
 \label{eq1115}
\end{equation}

We now use the second inequality in \eqref{eq1096}, which requires a bit more work.
Let $X$ be a random vector that is distributed uniformly on $B^n$. The second inequality in \eqref{eq1096} says that
\[
 \PP\left(\|X\|_P\leq s\right)\geq 0.99.
\]
Writing $X=R\Theta$, where $\Theta$ is uniform on $S^{n-1}$ and $R$ is
independent of $\Theta$, and using $\PP(R\geq 1/2)=1-2^{-n}$, we get
\[
 \PP_\Theta\left(\|\Theta\|_P\leq 2s\right)\geq 0.98
\]
for all $n$ large enough. Let $G$ be a standard Gaussian random vector in $\RR^n$.
Since $G/|G|$ is uniform on $S^{n-1}$ and
$\PP(|G|\le 3\sqrt n)\geq 8/9$, it follows that
\[
 \PP\left(\|n^{-1/2}G\|_P\leq 6s\right)\geq 3/4.
\]
Arguing as in the proof of Theorem \ref{thm_2118}, it is easy to check that for Gaussian norms, a $3/4$-quantile controls the mean up to a universal
constant. Thus,
\begin{equation}
 \EE\|n^{-1/2}G\|_P
 =
 \EE\|n^{-1/2}A^{-1}G\|_\infty
 \leq Cs .
 \label{eq1140}
\end{equation}

It remains to lower-bound the $\ell_\infty$ norm of a general Gaussian vector. Put $B=A^{-1}$, let
$b_1,\ldots,b_n$ be the rows of $B$, and let $a_1,\ldots,a_n$ be the columns of
$A$. Since $b_i\cdot a_j=\delta_{ij}$,
$$
 \operatorname{dist}\left(b_i,\operatorname{span}\{b_j:j\neq i\}\right)
 =
 \frac{1}{|a_i|}.
$$
Using that $\sum_i|a_i|^2=H^2$, there is a set $I\subset\{1,\ldots,n\}$ with
$|I|\geq n/2$ such that, for every $i\in I$,
$$
 \operatorname{dist}\left(b_i,\operatorname{span}\{b_j:j\neq i\}\right)
 \geq
 \frac{\sqrt n}{\sqrt2 H}
 =:\sigma .
$$
In particular, $|b_i-b_j|\ge \sigma$ for all distinct $i,j\in I$. By
the Sudakov minoration principle \cite[Theorem 3.18]{LT}, applied to the Gaussian process
$Y_i=\langle b_i,G\rangle$, $i\in I$,
\[
 \EE\max_{i\in I}\langle b_i,G\rangle
 \geq c\sigma\sqrt{\log |I|}.
\]
Therefore
\begin{equation}
 \EE\|n^{-1/2}A^{-1}G\|_\infty
 \geq
 c\frac{\sqrt{\log n}}{H}.
\label{eq1171}
\end{equation}
Combining \eqref{eq1140} and \eqref{eq1171} gives
\[
 sH\geq c\sqrt{\log n}.
\]
Together with \eqref{eq1115}, this yields
\[
 s^2\geq c\sqrt{\log n}.
\]
which is what we wanted to prove.
\end{proof}

\subsection{Some applications}
We proceed with an application to linear symplectic geometry,
where the Ekeland-Hofer-Zehnder capacity (EHZ) plays a role.
As in Remark \ref{rem_1045}, we identify $\RR^{2n} \cong \CC^n$ and use the linear operator $J: \RR^{2 n} \rightarrow \RR^{2n}$ that corresponds
to multiplication by $i$. It is proven in Gluskin and Ostrover \cite{GO} that for a centrally-symmetric
convex body $K \subseteq \RR^{2n}$, the expression
$$ \frac{1}{\| J: K^{\circ} \rightarrow K \|} $$
differs from the EHZ capacity $c_{EHZ}(K)$ by a factor of at most $4$. Proposition 3.2 in \cite{GO} implies that
for the unit cube $K = [-1,1]^{2n}$ there is a rotation $U \in O(2n)$ with
$$ c_{EHZ}(U K) \geq c \sqrt{n}. $$
Up to logarithmic factors, this is a general phenomenon of the random isotropic position:

\begin{proposition} Let $K \subseteq \RR^{2n}$ be a centrally-symmetric convex body in isotropic position,
and let $U$ be a random rotation, distributed uniformly in $O(2n)$. Then,
$$ \EE c_{EHZ}(U K) \geq c \frac{\sqrt{n}}{\log n}, $$
where $c > 0$ is a universal constant.
\end{proposition}

\begin{proof} By Jensen's inequality and Remark \ref{rem_1045},
$$ \EE c_{EHZ}(U K) \geq \frac{1}{4} \EE \frac{1}{\| J: (U K)^{\circ} \rightarrow UK \|} \geq
\frac{1}{4 \EE \| J: (U K)^{\circ} \rightarrow UK \| } \geq \frac{c}{ R(K^\circ)M(K) }. $$
By (\ref{eq_1935}) we know that $R(K^{\circ}) \leq 1$ while $M(K) \leq C \log n / \sqrt{n}$ by Corollary
\ref{cor_1617} and (\ref{eq_1100}).
\end{proof}

\medskip We conclude with an application to the first Dirichlet eigenvalue of the Laplacian.
For a bounded domain $K\subseteq\RR^n$, let $\lambda_1(K)$ denote the
minimal eigenvalue of $-\Delta$ on $K$ with Dirichlet boundary conditions, i.e.,
$$ \lambda_1(K) = \inf_{0 \not \equiv u \in C_c^{\infty}(K)} \frac{\int_{K} |\nabla u|^2}{\int_K u^2}, $$
where the infimum runs over all smooth functions that are compactly-supported in $K$ and are not identically zero.
The Faber-Krahn inequality states that
\begin{equation}  \lambda_1(K) \geq \lambda_1(rB^n)
 \geq  c n\,\Vol_n(K)^{-2/n}, \label{eq_1418} \end{equation}
 where $r = ( \Vol_n(K) / \Vol_n(B^n) )^{1/n}$.
 See Schmuckenschl\"ager \cite{S} for the right-hand side inequality in (\ref{eq_1418}),
 as well as for the conjecture that for any centrally-symmetric convex body $K \subseteq \RR^n$ there exists an invertible
 linear map $T: \RR^n \rightarrow \RR^n$ such that $\tilde{K} = T(K)$ satisfies
 $$ \lambda_1( \tilde{K} ) \leq  C n \,\Vol_n( \tilde{K} )^{-2/n} $$
 for a universal constant $C > 0$.
 The following is the reverse Faber-Krahn argument of Klartag and E. Milman,
recorded in Maz'ya \cite[Section 4.9]{mazya} and in Naor
\cite[Section 1.6.3 and Proposition 175]{naor}.

\begin{proposition}
Let $K\subseteq\RR^n$ be a convex body containing the origin in its interior. Then
\[
 \lambda_1(K)\le C n^2 M(K)^2.
\]
Consequently, if $K$ is in isotropic position,
then
\begin{equation}
 c n \leq \lambda_1(K)\leq C' n\psi_n^2\log n \le \tilde{C} n\log^2 n.
\label{eq_1354} \end{equation}
Here, $c, C, C', \tilde{C} > 0$ are universal constants.
\end{proposition}

\begin{proof} Denote $$ r=\frac{1}{2M(K)} $$
and set
\[
 L=K\cap rB^n .
\]
By the Chebyshev-Markov inequality,
\[
 \sigma_{n-1}\{\theta\in S^{n-1}:\|\theta\|_K\le 2M(K) \}\ge \frac12 .
\]
Therefore, by integration in polar coordinates,
\[
 \Vol_n(L)  = \Vol_n(B^n) \int_{S^{n-1}} \| \theta \|_L^{-n} \,d\sigma_{n-1}(\theta)
 \geq  \frac12 \Vol_n(r B^n).
\]
Consequently, since surface area is monotone under inclusion in the class of convex bodies,
$$ \frac{\Vol_{n-1}(\partial L)}{\Vol_n(L)} \leq \frac{\Vol_{n-1}(\partial (r B^n))}{\Vol_n(L)} \leq
2 \frac{\Vol_{n-1}(\partial (r B^n))}{\Vol_n(r B^n)} = \frac{2 n}{r} \leq C n M(K).  $$
By P\'olya's inequality for convex domains \cite{polya} and its high-dimensional form
due to Jo\'o and Stach\'o \cite{JS},
\[
 \lambda_1(L)
 \le
 C\left(\frac{\Vol_{n-1}(\partial L)}{\Vol_n(L)}\right)^2
 \le
 C n^2M(K)^2 .
\]
Since $L\subseteq K$, the monotonicity of the Dirichlet eigenvalue gives
\[
 \lambda_1(K)\le \lambda_1(L),
\]
proving the first assertion. We move to the proof of (\ref{eq_1354}).
Assume that $K$ is in isotropic position.
By the ``easy direction'' of Bourgain's slicing problem, we have $L_K \geq c$
where $L_K = \left( \det \cov(K) / \Vol_n(K)^2 \right)^{1/(2n)}$ is the isotropic constant of $K$.
Hence $$ \Vol_n(K)^{1/n} \leq C. $$ Thus the left-hand side inequality in (\ref{eq_1354})
follows from the Faber-Krahn inequality (\ref{eq_1418}). Next, since $K$ is in isotropic position, Corollary \ref{cor_1617} yields
\[
 M(K)\le C\frac{\psi_n\sqrt{\log n}}{\sqrt n}.
\]
Therefore
\[
 \lambda_1(K)
 \le
 C n^2 M(K)^2
 \le
 C n\psi_n^2\log n \leq \tilde{C} n \log^2 n,
\]
where we used \eqref{eq_1100} in the last passage. This completes the proof of (\ref{eq_1354}).
\end{proof}

\appendix
\section{Appendix: Proof of Proposition \ref{lem161}}

We denote the maximal and minimal eigenvalues of a matrix $A \in \RR^{n \times n}_{sym}$
by $\lambda_{\max}(A)$ and $\lambda_{\min}(A)$, respectively.
Following \cite{klartag2022bourgain}, we will use  proxies for $\lambda_{\max}$ and $\lambda_{\min}$.
For a symmetric matrix $A \in \RR_{sym}^{n \times n}$ and a parameter $\beta>0$ define
$$f_\beta(A) = \frac{1}{\beta}\log\tr\exp(\beta A)$$
and
$$g_\beta(A) = -f_\beta(-A) = -\frac{1}{\beta}\log\tr\exp(-\beta A).$$
These proxies satisfy the inequalities
\begin{equation}
    \lambda_{\max}(A) \,\leq\, f_\beta(A) \,\leq\, \lambda_{\max}(A) + \frac{\log n}{\beta}
\end{equation}
and
\begin{equation}
    \lambda_{\min}(A) \,\geq\, g_\beta(A)\,\geq\, \lambda_{\min}(A)-\frac{\log n}{\beta}.
\end{equation}

\begin{lemma}\label{lem184}
    Consider the function $\Phi: \RR^{n \times n}_{sym} \rightarrow \RR$ defined via
    $$\Phi(A) = \tr \left[  e^A \right].$$
    Then its gradient is given by
    $$\nabla\Phi(A) = e^A \in \RR^{n \times n}_{sym}. $$
    As for the Hessian of $\Phi$, for any $H\in \RR^{n \times n}_{sym}$ we have
    $$\nabla^2\Phi(A)(H,H) \leq \tr \left[ e^AH^2 \right].$$
\end{lemma}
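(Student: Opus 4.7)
My plan is to handle the gradient by differentiating the power series of $e^A$ term by term, and to prove the Hessian bound via the Duhamel expansion of $e^{A+tH}$ combined with scalar convexity of $x\mapsto e^x$.

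For the gradient, I would expand $\Phi(A) = \sum_{k\ge 0}\tr(A^k)/k!$ and differentiate each summand. Cyclicity of the trace gives $D\tr(A^k)[H] = k\,\tr(A^{k-1}H)$, so
$$ D\Phi(A)[H] \;=\; \sum_{k\ge 1} \frac{\tr(A^{k-1}H)}{(k-1)!} \;=\; \tr\bigl(e^A H\bigr). $$
Since the ambient pairing on $\RR^{n\times n}_{symm}$ is $\langle M,H\rangle = \tr(MH)$, this identifies $\nabla\Phi(A) = e^A$.

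For the Hessian, I would start from the standard Duhamel identity
$$ \frac{d}{dt}\,e^{A+tH}\Big|_{t=0} \;=\; \int_0^1 e^{(1-s)A}\,H\,e^{sA}\,ds, $$
differentiate once more in $t$, take trace, and use cyclicity together with a change of variables and a symmetrization in $s\leftrightarrow 1-s$ to arrive at the representation
$$ \nabla^2\Phi(A)(H,H) \;=\; \int_0^1 \tr\!\bigl[H\,e^{sA}\,H\,e^{(1-s)A}\bigr]\,ds. $$
Diagonalizing $A = \sum_i \lambda_i\, u_i\otimes u_i$ in an orthonormal basis $u_1,\ldots,u_n$ and writing $H_{ij} = \langle Hu_j,u_i\rangle$, the integrand becomes
$$ \sum_{i,j} H_{ij}^2\, e^{(1-s)\lambda_i + s\lambda_j}. $$
The scalar convexity inequality $e^{(1-s)\lambda_i + s\lambda_j} \le (1-s)e^{\lambda_i} + s\,e^{\lambda_j}$, combined with the symmetry $H_{ij}=H_{ji}$ which allows swapping the role of $i$ and $j$, shows that both resulting pieces equal $\sum_{i,j} H_{ij}^2\,e^{\lambda_i} = \tr[e^A H^2]$. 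Hence the integrand is pointwise bounded by $\tr[e^A H^2]$, and integrating in $s$ yields the desired Hessian bound.

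I do not foresee any real difficulty. The Duhamel expansion is a routine Picard iteration that is valid for all bounded matrices, and the crucial convexity step is essentially Hermite-Hadamard applied to the exponential, equivalently the fact that the logarithmic mean of $e^{\lambda_i}$ and $e^{\lambda_j}$ never exceeds their arithmetic mean.
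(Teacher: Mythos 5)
Your proof is correct and follows essentially the same route as the paper: term-wise differentiation of the power series for the gradient, the Duhamel/integral representation $\nabla^2\Phi(A)(H,H)=\int_0^1 \tr\bigl[e^{(1-s)A}He^{sA}H\bigr]\,ds$, diagonalization of $A$, and convexity of the exponential. The only cosmetic difference is that you bound the integrand pointwise via $e^{(1-s)\lambda_i+s\lambda_j}\le (1-s)e^{\lambda_i}+s e^{\lambda_j}$ and then integrate, whereas the paper observes that $g(s)=\tr\bigl[e^{(1-s)A}He^{sA}H\bigr]$ is convex in $s$ and bounds $\int_0^1 g \le \tfrac12\bigl(g(0)+g(1)\bigr)=\tr\bigl[e^AH^2\bigr]$; these amount to the same convexity fact.
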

\begin{proof}
Set $F(A)=e^{A}$. By differentiating the power series of $e^{A}$ term-wise,
\[
 dF(A)(H)
=\sum_{i,j\ge 0}\frac{1}{(i+j+1)!}A^i H A^j = \sum_{k\ge1}\frac{1}{k!}\sum_{j=0}^{k-1}A^{j}HA^{k-1-j}.
\]
Classically, this can be rewritten using an integral representation
\[ dF(A)(H)
=\int_0^1 e^{(1-s)A}\,H\,e^{sA}\,ds.
\]
By using either of these two formulae, as well as the cyclic invariance of the trace,
\[
d\Phi(A)(H)
=\Tr\left[ dF(A)(H)\right]
=\Tr[ e^{A}H ],
\]
thus $\nabla \Phi(A) = e^A$.
Differentiating once more in the direction of $H$,
\[
\nabla^2\Phi(A)(H,H)
=\Tr\left[ dF(A)(H)\,H\right]
=\int_0^1 \Tr\left[ e^{(1-s)A}H\,e^{sA}H\right]\,ds
=\int_0^1 g(s)\,ds,
\]
where $$ g(s):=\Tr\!\left[ e^{(1-s)A}H\,e^{sA}H\right].$$ We diagonalize $A=Q\Lambda Q^*$
for an orthogonal matrix $Q$ and a diagonal matrix $\Lambda$ whose diagonal elements are
$\lambda_1,\ldots,\lambda_n \in \RR$. Define also $\tilde{H}=Q^* H Q = (\widetilde H_{ij})_{i,j=1,\ldots,n}$.
Then
\[
g(s)=\sum_{i,j=1}^n e^{(1-s)\lambda_i+s\lambda_j}\,|\widetilde H_{ij}|^2.
\]
We have expressed $g$ as a sum of $n^2$ convex functions of $s$, hence $g$ itself is convex on $[0,1]$. Therefore
\[
\int_0^1 g(s)\,ds \,\leq\, \frac{1}{2}\left(g(0)+g(1)\right).
\]
We conclude that
\[
(\nabla^2\Phi)(A)(H,H)
=\int_0^1 g(s)\,ds \,\leq\,\frac{1}{2}\left(g(0)+g(1)\right) = \Tr(e^{A}H^2).
\]
\end{proof}

\begin{lemma}\label{lem228}
Let $X$ be a centered, log-concave random vector in $\RR^n$. For $\theta\in\RR^n$, define
\[
H_\theta = \EE[\langle X,\theta\rangle \,X \otimes X] \in \RR^{n \times n}_{sym}
\]
and for $1\leq i\leq n$, abbreviate $H_i = H_{e_i}$. Then,
\begin{equation}
\left \|\sum_{i=1}^n H_i^2 \right \|_{op} \leq \kappa_n^2 \| \cov(X) \|_{op}^{3},
\label{eq_2001} \end{equation}
while  for any $\theta\in S^{n-1}$,
\[
\|H_\theta\|_{op}^2 \leq 9 \|\cov(X)\|_{op}^3.
\]
\end{lemma}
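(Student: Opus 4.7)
The plan is to treat the two inequalities separately, each as a routine consequence of a previously established tool.

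\medskip
\textbf{The first bound.} Since each $H_i$ is symmetric, $\sum_{i=1}^n H_i^2$ is positive semi-definite, so its operator norm equals $\sup_{|v|=1} \sum_{i=1}^n |H_i v|^2$. The key step is a symmetrization identity: writing $T_{ijk} = \EE X_i X_j X_k$ for the fully symmetric third-moment tensor, the $j$-th coordinate of $H_i v$ is $\sum_k T_{ijk} v_k$, which is precisely the $(i,j)$-entry of the matrix $H_v = \EE \langle X, v\rangle (X \otimes X)$. Hence
$$ \sum_{i=1}^n |H_i v|^2 \;=\; \|H_v\|_{HS}^2. $$
With this identity in hand, the scaling form of the definition of $\kappa_n$ already recorded in (\ref{eq_1945}) gives $\|H_v\|_{HS}^2 \leq \kappa_n^2 \|\cov(X)\|_{op}^3$ uniformly in $|v|=1$, and (\ref{eq_2001}) follows upon taking the supremum.

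\medskip
\textbf{The second bound.} Since $H_\theta$ is symmetric,
$$ \|H_\theta\|_{op} \;=\; \sup_{|v|=1} \bigl|\EE \langle X, \theta\rangle \langle X, v\rangle^2\bigr|. $$
By Cauchy-Schwarz, the inner expectation is at most $\sqrt{\EE\langle X,\theta\rangle^2}\cdot \sqrt{\EE\langle X,v\rangle^4}$. The first factor is bounded by $\sqrt{\|\cov(X)\|_{op}}$ since $|\theta|=1$. For the second, $\langle X, v\rangle$ is a centered log-concave random variable on $\RR$, and the sharp one-dimensional fourth-moment inequality for such variables,
$$ \EE Z^4 \;\leq\; 9\,(\EE Z^2)^2, $$
attained in the limit by the centered exponential (a direct computation gives $\EE (Y-1)^4 = 9$ for $Y \sim \mathrm{Exp}(1)$ with unit variance), yields $\EE \langle X, v\rangle^4 \leq 9 \|\cov(X)\|_{op}^2$. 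Combining, $\|H_\theta\|_{op} \leq 3 \|\cov(X)\|_{op}^{3/2}$, which squares to the claim.

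\medskip
Neither step is genuinely delicate. The only non-mechanical ingredient is the symmetrization identity $\sum_i |H_i v|^2 = \|H_v\|_{HS}^2$, which trades the $n$ coordinate-wise matrices $H_i$ naturally appearing in the covariance SDE (\ref{eq_1204}) for the single directional third moment $H_v$ that is controlled by the definition of $\kappa_n$ via (\ref{eq_1945}). The sharp log-concave fourth-moment bound used in the second step is classical and would be quoted from the literature rather than rederived.
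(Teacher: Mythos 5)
Your proposal is correct and follows essentially the same route as the paper: the identity $\sum_i |H_i v|^2 = \|H_v\|_{HS}^2$ (the paper writes it as $\sum_i \langle H_i^2\theta,\theta\rangle = \Tr[H_\theta^2]$) combined with (\ref{eq_1945}) for the first bound, and Cauchy--Schwarz plus the fourth-moment inequality $\EE Z^4 \leq 9(\EE Z^2)^2$ for centered log-concave variables (which the paper quotes from Eitan, and which is attained exactly, not just in the limit, by the centered exponential) for the second.
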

\begin{proof}
For the first inequality, observe that for any vector $\theta\in S^{n-1}$,
$$ \sum_{i=1}^n \langle H_i^2 \theta, \theta \rangle = \Tr[H_{\theta}^2] = \| H_{\theta} \|_{HS}^2. $$
Now (\ref{eq_2001}) follows from (\ref{eq_1945}), as
$$ \left \|\sum_{i=1}^n H_i^2 \right \|_{op} = \sup_{\theta \in S^{n-1}} \| H_{\theta} \|_{HS}^2
\leq \| \cov(X) \|_{op}^3 \kappa_n^2. $$
The second inequality relies on comparison of moments for centered, log-concave random variables (see Eitan \cite[Theorem 4]{eitan}). Indeed, for any $\theta,u\in S^{n-1}$,
\begin{align*}
    \langle H_\theta u, u \rangle^2 &= \left(\EE\langle X,\theta\rangle\langle X,u\rangle^2\right)^2\\
    &\leq \EE\langle X,\theta\rangle^2\EE\langle X,u\rangle^4 \\
    &\leq 9 \EE\langle X,\theta\rangle^2 \left(\EE\langle X,u\rangle^2\right)^2 \leq 9 \|\cov(X)\|_{op}^3.
\end{align*}
\end{proof}

We will also need a deviation inequality for continuous martingales. The following lemma was established by Freedman \cite{freedman1975tail} in the discrete setting, and later extended by Shorack and Wellner \cite[Appendix B]{shorack2009empirical}.
\begin{lemma}\label{lem_freedman}
    Let $T > 0$ and let $(M_t)_{t\geq0}$ be a continuous martingale starting at $0$ such that
    $$[M]_T \leq b \;\; \textrm{a.s.}$$
    Then for any $a>0$,
    $$\PP\left(\sup_{t\in [0,T]} M_t \geq a\right)\leq e^{-\frac{a^2}{2b}}$$
\end{lemma}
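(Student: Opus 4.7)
My plan is to prove Freedman's inequality by the classical exponential martingale method, which is the continuous-time analogue of Azuma/Bennett-type arguments.

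The starting point is to introduce, for each $\lambda > 0$, the Doléans–Dade exponential
\[
E_t^\lambda \;=\; \exp\!\left(\lambda M_t - \tfrac{\lambda^2}{2}[M]_t\right).
\]
An application of It\^o's formula to the continuous semimartingale inside the exponential shows that $dE_t^\lambda = \lambda E_t^\lambda\,dM_t$, so $E^\lambda$ is a continuous local martingale starting from $1$. Since it is non-negative, it is automatically a supermartingale (by Fatou applied to a localizing sequence), so $\EE E_t^\lambda \leq 1$ for every $t$; this does not require any integrability beyond the hypothesis that $[M]_T \leq b$ almost surely.

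The second step is to combine this with a deterministic comparison. On the event
\[
\mathcal{A} \;=\; \Bigl\{\sup_{t\in[0,T]} M_t \geq a\Bigr\},
\]
pick the first time $\tau \leq T$ at which $M_\tau \geq a$. Since $[M]_\tau \leq [M]_T \leq b$ almost surely, we have pointwise on $\mathcal{A}$ the inequality
\[
E_\tau^\lambda \;\geq\; \exp\!\left(\lambda a - \tfrac{\lambda^2}{2} b\right).
\]
Hence $\sup_{t \in [0,T]} E_t^\lambda \geq \exp(\lambda a - \lambda^2 b/2)\cdot \mathbf{1}_{\mathcal{A}}$, and Markov's inequality together with Doob's maximal inequality for the non-negative supermartingale $E^\lambda$ yields
\[
\PP(\mathcal{A}) \;\leq\; \exp\!\left(-\lambda a + \tfrac{\lambda^2}{2} b\right) \cdot \EE E_0^\lambda \;=\; \exp\!\left(-\lambda a + \tfrac{\lambda^2}{2} b\right).
\]

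Finally I optimize the free parameter $\lambda$. The right-hand side is minimized at $\lambda = a/b$, giving exactly $e^{-a^2/(2b)}$, which is the claimed bound. The main technical point to be careful about is justifying that $E^\lambda$ is indeed a supermartingale (and not just a local martingale) under the sole hypothesis $[M]_T \leq b$ almost surely; this is handled cleanly by non-negativity, so no Novikov-type integrability condition needs to be checked. The rest is a standard optimization.
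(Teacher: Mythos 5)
Your argument is correct. Note, however, that the paper does not prove this lemma at all: it is quoted as a known result, with the discrete case attributed to Freedman and the continuous-time extension to Shorack and Wellner (Appendix B of their book), so there is no internal proof to compare against. What you supply is the standard exponential-supermartingale (Ville/Doob) proof, and it is sound: the Dol\'eans--Dade exponential $E_t^\lambda=\exp\bigl(\lambda M_t-\tfrac{\lambda^2}{2}[M]_t\bigr)$ is a continuous local martingale by It\^o, and non-negativity plus Fatou along a localizing sequence makes it a supermartingale with $\EE E_t^\lambda\leq 1$, with no Novikov-type condition needed -- exactly the point you flag. The pointwise lower bound $E_\tau^\lambda\geq e^{\lambda a-\lambda^2 b/2}$ on the event $\{\sup_{t\leq T}M_t\geq a\}$ uses only the monotonicity of the quadratic variation together with the hypothesis $[M]_T\leq b$ a.s., and the supremum is attained on $[0,T]$ by continuity, so the first hitting time $\tau\leq T$ exists on that event. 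The maximal inequality for non-negative supermartingales then gives $\PP(\mathcal A)\leq e^{-\lambda a+\lambda^2 b/2}$, and optimizing at $\lambda=a/b$ yields $e^{-a^2/(2b)}$. (Equivalently one could apply optional stopping to $E^\lambda$ at the bounded stopping time $\tau\wedge T$; this is a cosmetic variant.) Your write-up would in fact make the paper self-contained on this point, which the authors chose to handle by citation.
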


We are now in position to prove the right-hand side inequality in (\ref{eq_1842}) of Proposition \ref{lem161}. We repeat the proof given in Klartag and Lehec \cite{KL_bulletin}.

\begin{lemma}\label{lem155}
Let $T_0 = (C\kappa_n^2\log n)^{-1}$. Then,
    $$\PP\left(\exists\, t\leq T_0,\, \|A_t\|_{op} \geq 2\right) \leq \exp\left(-\frac{c}{T_0}\right)$$
    where $c>0$ is a universal constant.
\end{lemma}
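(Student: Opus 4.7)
The plan is to use the log-sum-exp proxy $f_\beta(A_t)$ as a smooth upper bound on $\lambda_{\max}(A_t)$, compute its It\^o dynamics via \eqref{eq155}, bound drift and quadratic variation up to the first hitting time of the forbidden region, and conclude via Freedman's inequality (Lemma~\ref{lem_freedman}).

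First, I would fix $\beta = 2\log n$, so that $f_\beta(A_0) = f_\beta(\id) = 1 + (\log n)/\beta = 3/2$ and $f_\beta(A) \geq \lambda_{\max}(A)$ in general. Introduce the stopping time $\tau = \inf\{t \geq 0 : \|A_t\|_{op} \geq 2\}$. The event $\{\tau \leq T_0\}$ is then contained in $\{f_\beta(A_{\tau\wedge T_0}) - f_\beta(A_0) \geq 1/2\}$, so it suffices to bound the probability that this increment exceeds $1/2$.

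Next, I would apply It\^o's formula to $f_\beta(A_t)$ via \eqref{eq155}. Since $\nabla f_\beta(A) = e^{\beta A}/\tr e^{\beta A}$ is positive semi-definite, the $-\nabla f_\beta(A_t)\cdot A_t^2\,dt$ contribution to the drift is nonpositive and can be discarded. By Lemma~\ref{lem184} together with the chain rule $f_\beta = \beta^{-1}\log \Phi(\beta\cdot)$, the Hessian of $f_\beta$ satisfies
$$\nabla^2 f_\beta(A)(H,H) \,\leq\, \beta\,\frac{\tr(e^{\beta A} H^2)}{\tr e^{\beta A}}.$$
Summing over $i$ and using the trace inequality $\tr(BC) \leq \|C\|_{op}\tr(B)$ for PSD $B = e^{\beta A_t}$ and $C = \sum_i H_{i,t}^2$, combined with Lemma~\ref{lem228}, the It\^o-correction drift is at most $\tfrac{\beta}{2}\kappa_n^2\|A_t\|_{op}^3\,dt$. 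On $\{t < \tau\}$ this is $\leq 4\beta\kappa_n^2\,dt$, contributing at most $4\beta\kappa_n^2 T_0 = 8/C$ to the total drift over $[0, T_0 \wedge \tau]$.

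The martingale part $M_t = \int_0^t \sum_i \frac{\tr(e^{\beta A_s}H_{i,s})}{\tr e^{\beta A_s}}\,dB_{i,s}$ of the decomposition has quadratic variation bounded by Cauchy--Schwarz:
$$d[M]_t \,\leq\, \frac{\tr(e^{\beta A_t}\sum_i H_{i,t}^2)}{\tr e^{\beta A_t}}\,dt \,\leq\, \Big\|\sum_i H_{i,t}^2\Big\|_{op}\,dt \,\leq\, 8\kappa_n^2\,dt \qquad (t < \tau),$$
so $[M]_{T_0\wedge\tau} \leq 8\kappa_n^2 T_0 = 8/(C\log n)$. Taking $C$ large enough that $8/C \leq 1/4$, on the event $\{\tau \leq T_0\}$ the martingale must satisfy $M_{\tau\wedge T_0} \geq 1/2 - 1/4 = 1/4$. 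Freedman's inequality (Lemma~\ref{lem_freedman}), applied to the stopped continuous martingale, then gives
$$\PP(\tau \leq T_0) \,\leq\, \PP\Big(\sup_{t \leq T_0 \wedge \tau} M_t \geq \tfrac{1}{4}\Big) \,\leq\, \exp\!\left(-\frac{1/16}{16\kappa_n^2 T_0}\right),$$
which is the desired bound $\exp(-c/T_0)$ after absorbing constants and increasing $C$.

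The main subtlety lies in the It\^o-correction drift for $f_\beta$: its control combines Lemma~\ref{lem184}'s matrix-exponential convexity estimate with Lemma~\ref{lem228}'s bound on $\|\sum_i H_i^2\|_{op}$, both of which are only valid while $\|A_t\|_{op}$ stays bounded. Stopping at $\tau$ is therefore essential, and the entire argument hinges on showing that the drift budget $8/C$ on $[0, T_0]$ is strictly smaller than the target $1/2$ so that the remaining mass must come from the (small) martingale fluctuations.
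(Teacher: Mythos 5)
Your overall scheme coincides with the paper's: the log-sum-exp proxy $f_\beta$ with $\beta=2\log n$, the stopping time $\tau$, discarding the $-\nabla f_\beta\cdot A^2$ drift, the Hessian bound from Lemma~\ref{lem184} combined with the first part of Lemma~\ref{lem228} for the It\^o-correction drift, and Freedman's inequality at the end. The drift analysis and the reduction of $\{\tau\le T_0\}$ to $\{M_{\tau\wedge T_0}\ge 1/4\}$ are correct.

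The gap is in your quadratic variation estimate, and it is quantitative but real. Via Cauchy--Schwarz you bound
$\sum_i \bigl( \tr[M H_{i,t}] \bigr)^2 \le \tr\bigl[ M \sum_i H_{i,t}^2 \bigr] \le \bigl\| \sum_i H_{i,t}^2 \bigr\|_{op} \le 8\kappa_n^2$ for $t<\tau$, so $[M]_{T_0\wedge\tau}\le 8\kappa_n^2 T_0$ and Freedman yields only $\exp\bigl(-c/(\kappa_n^2 T_0)\bigr)=\exp(-c'\log n)$. This is \emph{not} the stated bound $\exp(-c/T_0)=\exp(-cC\kappa_n^2\log n)$, and the discrepancy cannot be ``absorbed into constants'': $\kappa_n$ is not a universal constant --- it satisfies $\kappa_n\ge 2$ and is currently only known to obey $\kappa_n\le 2\psi_n\le C\sqrt{\log n}$, so it may grow with $n$. (Your weaker bound would still suffice for the $1-n^{-10}$ requirement in Corollary~\ref{lem_2205} after enlarging $C$, but it does not prove Lemma~\ref{lem155}, nor Proposition~\ref{lem161}, as stated.) The paper avoids the loss by exploiting that $M$ is positive semi-definite with $\tr M=1$ together with the \emph{second} part of Lemma~\ref{lem228}: writing
$\sum_i \bigl( \tr[M H_{i,t}] \bigr)^2 = \sup_{\theta\in S^{n-1}} \bigl( \tr[M H_\theta] \bigr)^2 \le \sup_{\theta\in S^{n-1}} \| H_\theta \|_{op}^2 \le 9\|A_t\|_{op}^3 \le 72$ for $t<\tau$,
which rests on the universal moment comparison $\EE\langle X,u\rangle^4\le 9(\EE\langle X,u\rangle^2)^2$ for log-concave marginals rather than on $\kappa_n$. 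With this, $[M]_{T_0\wedge\tau}\le C T_0$ and Freedman gives exactly $\exp(-c/T_0)$. In short: the HS-type bound ($\kappa_n$) is the right tool for the drift, where it is compensated by $T_0\propto\kappa_n^{-2}\,(\log n)^{-1}$ against the factor $\beta$, but the martingale term must be controlled by the operator norm of $H_\theta$, which is bounded by a universal constant on $\{t<\tau\}$.
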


\begin{proof}
    We drop the $t$ subscript for readability. Let $\beta>0$ and recall that
    $$f_\beta(A) = \frac{1}{\beta}\log\tr\exp(\beta A) = \frac{1}{\beta} \log \Phi(\beta A).$$
    Thus
    $$\nabla f_\beta(A) = \frac{e^{\beta A}}{\Tr e^{\beta A}} = \frac{\nabla\Phi(\beta A)}{\Phi(\beta A)} =: M, $$
    where $M$ is a symmetric, positive semi-definite matrix of trace $1$. Furthermore, for any $H\in\RR^{n \times n}_{sym}$, by Lemma \ref{lem184},
    \begin{align*}
        \nabla^2f_\beta(A)(H,H) & =  \left(\frac{\beta\nabla^2\Phi(\beta A)}{\Phi(\beta A)} - \frac{\beta \nabla\Phi(\beta A) \otimes \nabla\Phi(\beta A)}{\Phi(\beta A)^2}\right)(H,H) \\
        &\leq \beta \frac{\nabla^2\Phi(\beta A)(H,H)}{\Phi(\beta A)}\\
        &\leq \beta \frac{\Tr \left[ e^{\beta A}H^2 \right]}{\Tr \left[ e^{\beta A} \right]} = \beta \, M\cdot H^2,
    \end{align*}
    where for two symmetric matrices $X, Y \in \RR^{n \times n}_{sym}$ we abbreviate $X \cdot Y := \Tr[XY]$.
    Recall that $X \cdot Y \geq 0$ when the matrices $X$ and $Y$ are positive semi-definite.
    In particular, $A \cdot M \geq 0$.    Formula \eqref{eq155} thus yields
    \begin{align}
    df_\beta(A) &= \nabla f_\beta(A)\cdot\sum_{i=1}^nH_{i}dB_i -\nabla f_\beta(A)\cdot A^2 dt + \frac{1}{2}\sum_{i=1}^n \nabla^2f_\beta(A)(H_{i}\,,\,H_{i}) \, dt\nonumber \\
    &\leq M\cdot\sum_{i=1}^nH_{i}dB_i + \frac{1}{2}\sum_{i=1}^n \nabla^2f_\beta(A)(H_{i}\,,\,H_{i}) \, dt\nonumber\\
    &\leq M\cdot\sum_{i=1}^nH_{i}dB_i + \frac{\beta}{2}M\cdot\sum_{i=1}^nH_i^2 \, dt\nonumber\\
    &\leq M\cdot\sum_{i=1}^nH_{i}dB_i + \frac{\beta}{2} \left \|\sum_{i=1}^n H_i^2 \right \|_{op}\, dt\nonumber\\
    &\leq M\cdot\sum_{i=1}^nH_{i}dB_i + \frac{\beta}{2} \| A \|_{op}^3 \kappa_n^2 \, dt\label{eq302}
    \end{align}
    where we used that $\Tr M \!=\! 1$ as well as Lemma \ref{lem228}.
    Consider the stopping time
    $$\tau = \inf \{t>0 \ ; \, \|A_t\|_{op}\geq 2\}.$$
    We choose $\beta = 2\log n$, and  set
    $$T_0 = \frac{c_0}{\kappa_n^2\log n}$$
    for some $c_0 > 0$ to be determined. Recall that $f_{\beta}(A_0) = 1 + \beta^{-1} \log n$. From \eqref{eq302}  we obtain that for any $t\leq T_0$,
    \begin{align}
        f_\beta(A_{t\wedge\tau}) &\leq N_t + 1 + \frac{\log n}{\beta} + \int_0^{t\wedge\tau} \frac{\beta}{2} \|A_s\|^3_{op}\kappa_n^2 ds \nonumber\\
        &\leq N_t + 3/2 + 8T_0\kappa_n^2\log n \nonumber\\
        &\leq 7/4 + N_t \label{eq314}
    \end{align}
    where we chose $c_0 = \frac{1}{32}$ and $N_t$ is the martingale
    $$N_t = \int_0^{t\wedge\tau}\sum_{i=1}^nM\cdot H_{i}dB_i. $$
    Its quadratic variation satisfies
    $$ [N]_t = \int_0^{t\wedge\tau} \sum_{i=1}^n \left( \Tr \left[ MH_i \right] \right)^2. $$
Since $M$ is a symmetric, positive semi-definite matrix of trace $1$, we have
    \begin{align*}
        \sum_{i=1}^n \left( \Tr \left[ MH_i \right] \right)^2 & = \sup_{\theta\in S^{n-1}}\left(\sum_{i=1}^n\Tr [MH_i\theta_i]\right)^2 \\
        & = \sup_{\theta\in S^{n-1}} \left(\Tr[MH_\theta]\right)^2\\
        & \leq  \sup_{\theta\in S^{n-1}} \|H_\theta\|_{op}^2\\
        & \leq 9\|A\|_{op}^3,
    \end{align*}
    where we used the second part of Lemma \ref{lem228}. Thus, for any $t\leq T_0$,
    $$[N]_t \leq 9\int_{0}^{t\wedge\tau} \|A_{s}\|_{op}^3 ds \leq CT_0.$$
    Recalling that $\| A_{t \wedge \tau} \|_{op} \leq f_{\beta}(A_{t \wedge \tau})$, from \eqref{eq314} we see that for any $t\leq T_0$,
    $$\|A_{t\wedge\tau}\|_{op} \leq 7/4 + N_t. $$
Thus,
\begin{align*}
    \PP\left( \tau \leq T_0 \right)
        &= \PP\left( \exists\, t \in [0,T_0] \; ; \; \|A_{t \wedge \tau}\|_{op} \geq 2 \right) \\
        &\leq \PP\left( \exists\, t \in [0,T_0] \; ; \; N_{t} \geq \tfrac{1}{4} \right) \\
        &= \PP\left( \exists\, t \in [0,T_0] \; ; \; N_{t} \geq \tfrac{1}{4}, \; [N]_{T_0} \leq CT_0 \right) \\
        &\leq \exp\left( -\frac{c}{T_0} \right),
\end{align*}
    for some constant $c>0$, where we used Lemma \ref{lem_freedman} in the last passage.
\end{proof}
Having established Lemma \ref{lem155}, we can now use it to control the smallest eigenvalue $\lambda_{\min}(A_t)$. Since the argument is nearly identical, we only provide a sketch.

\begin{proof}[Proof of Proposition \ref{lem161}.]
    This time we employ the functional
    $$g_\beta(A) = g_\beta(A_t) = -\frac{1}{\beta}\log \Tr\left[ e^{-\beta A}\right].$$
    We set
    $$\tilde{M} = \nabla g_\beta(A) = \frac{e^{-\beta A}}{\Tr \left(e^{-\beta A}\right)}$$
    which again is a symmetric, positive semi-definite matrix of trace $1$. From Lemma \ref{lem184}, we have, for any symmetric matrix $H$,
    $$\nabla^2g_\beta(A)(H,H) \geq -\beta\Tr\left[ \tilde{M} H^2\right].$$
    Formula \eqref{eq155} yields
    \begin{align}
        dg_\beta(A) &\geq \sum_{i=1}^n \Tr[\tilde{M}H_i]dB_i - \Tr[\tilde{M}A^2]\,dt -\frac{\beta}{2}\sum_{i=1}^n \Tr[\tilde{M}H_i^2]\, dt\nonumber \\
        &\geq \sum_{i=1}^n \Tr[\tilde{M}H_i]dB_i - \|A_t\|^2_{op}dt - \frac{\beta}{2} \|A_t\|_{op}^3 \kappa_n^2 dt. \label{eq370}
    \end{align}
    Note that compared with \eqref{eq302} we got an additional drift term
    $$- \Tr[ \tilde{M}A^2]\,dt \geq - \|A_t\|^2_{op} dt $$
    coming from the fact that $A_t$ is a supermartingale. Recall that
    $$\tau = \inf\{t>0 \, ; \, \ \|A_t\|_{op}\geq 2\},$$
    and denote by $\tilde{N}_t$ the martingale
    $$\tilde{N}_t = \int_0^{t\wedge\tau} \sum_{i=1}^n \Tr[\tilde{M}H_i]dB_i.$$
    Set $$T = \frac{c_1}{\kappa_n^2\log n} \leq T_0$$
    where $c_1\leq c_0$ is to be chosen. We select $\beta = 8\log n$ and note that $g_{\beta}(A_0) = 1 - \beta^{-1} \log n$. For any $t\leq T$, integrating \eqref{eq370} yields
    \begin{align*}
        g_\beta(A_{t\wedge\tau}) &\geq \tilde{N}_t + 1 - \frac{1}{8} - 4T - 4\beta\kappa_n^2T \\
        &\geq \frac{3}{4} + \tilde{N}_t,
    \end{align*}
    where we chose the universal constant $c_1$ small enough. Thus, for any $t \leq T$,
    $$\lambda_{\min}\left(A_{t\wedge\tau}\right) \geq g_\beta(A_{t\wedge\tau}) \geq \frac{3}{4} + \tilde{N}_t.$$
    As before, we bound the quadratic variation of $\tilde{N}_t$ as
    $$[\tilde{N}]_t \leq 9\int_{0}^{t\wedge\tau} \|A_{s}\|_{op}^3 ds \leq C T.$$
    In particular, we obtain
\begin{align*}
    \PP\left( \exists\, t \in [0,T] \, ; \,
        \lambda_{\min}\left(A_{t\wedge\tau}\right) \leq \tfrac{1}{2} \right)
    &\leq \PP\left( \exists\, t \in [0,T] \, ; \, \tilde{N}_{t} \leq -\tfrac{1}{4} \right) \\
    &= \PP\left( \exists\, t \in [0,T] \, ; \,
        -\tilde{N}_{t} \geq \tfrac{1}{4},\; [\tilde{N}]_{T} \leq C T \right) \\
    &\leq \exp\left(-\frac{c}{T}\right),
\end{align*}
for some constant $c>0$. Finally, since $T \leq T_0$,
\begin{align*}
    \PP\left( \exists\, t \in [0,T] \, ; \,
        \lambda_{\min}(A_{t}) \leq \tfrac{1}{2} \right)
    &\leq \PP\left( \exists\, t \in [0,T] \, ; \,
        \lambda_{\min}(A_{t\wedge\tau}) \leq \tfrac{1}{2} \right)
        + \PP(\tau \leq T) \\
    &\leq 2 \exp\left(-\frac{c_1}{T}\right),
\end{align*}
for some constant $c_1>0$, where we used Lemma \ref{lem155} in the last passage.
This completes the proof.
\end{proof}

\noindent Department of Mathematics, Weizmann Institute of Science, Rehovot 7610001, Israel. \\
\noindent Email: \href{mailto:pierre.bizeul@weizmann.ac.il}{pierre.bizeul@weizmann.ac.il}

\medskip \noindent
School of Mathematical Sciences, Tel Aviv University, Tel Aviv 6997801, Israel; and \\ Department of Mathematics, Weizmann Institute of Science, Rehovot 7610001, Israel. \\
Email: \href{mailto:klartagb@tau.ac.il}{klartagb@tau.ac.il}


\begin{thebibliography}{99}

\bibitem{ban} Banaszczyk, W., {\it Inequalities for convex bodies and polar reciprocal lattices in  $\RR^n$. II. Application of $K$-convexity}.
Discrete Comput. Geom., Vol. 16, no. 3, (1996), 305--311.

\bibitem{BLPS} Banaszczyk, W., Litvak, A. E., Pajor, A., Szarek, S., {\it The flatness theorem for nonsymmetric convex bodies via the local theory of Banach spaces.}
Math. Oper. Res., Vol. 24, no. 3, (1999), 728--750.


\bibitem{BG}
Benyamini, Y., Gordon, Y., {\it Random factorization of operators between Banach spaces.}
J. Analyse Math., Vol. 39, (1981), 45--74.

\bibitem{biz} Bizeul, P., {\it
The slicing conjecture via small ball estimates}. Preprint, arXiv:2501.06854

\bibitem{bizeul2024measures} Bizeul, P., {\it On measures strongly log-concave on a subspace.}
Annales de l'Institut Henri Poincar\'e (B), Prob.  et Stat., Vol. 60, no. 2, (2024), 1090--1100.


\bibitem{BN} Bobkov S.G., Nazarov, F.L., {\it On convex bodies and log-concave probability measures with unconditional basis.} Geom. aspects of Funct. Anal.,
Lecture Notes in Math., Vol. 1807, Springer, (2003), 53--69.


\bibitem{BouM}  Bourgain, J., Milman, V. D., {\it Distances between normed spaces, their subspaces and quotient spaces.}
 Integral Equations Operator Theory, Vol. 9, no. 1, (1986), 31--46.

\bibitem{chevet} Chevet, S.,
{\it S\'eries de variables al\'eatoires gaussiennes \`a valeurs dans $E \hat{\otimes}_{\eps} F$. Application aux produits d'espaces de Wiener abstraits.}
S\'eminaire Analyse fonctionnelle ``Maurey-Schwartz'' (1977-1978), Exp. No. 19,
\'Ecole Polytechnique, Centre de Math\'ematiques, Palaiseau, (1978), 1--15.

\bibitem{chiganski} Chigansky, P., {\it Introduction to Nonlinear Filtering}. Lecture notes prepared for a course at the
Weizmann Institute of Science, 2005.
Available at \url{https://pluto.huji.ac.il/~pchiga/teaching/Filtering/filtering-v0.2.pdf}

\bibitem{DMTJ} Davis, W. J., Milman, V. D., Tomczak-Jaegermann, N.,
{\it The distance between certain $n$-dimensional Banach spaces.}
Israel J. Math., Vol. 39, no. 1-2, (1981), 1--15.


\bibitem{eitan} Eitan, Y., {\it The centered convex body whose marginals have the heaviest tails.}
Studia Math., Vol. 274, no. 3, (2024), 201--215.


\bibitem{eldan_phd} Eldan, R., {\it Distribution of mass in convex bodies}.
Ph.D. thesis, Tel Aviv University, 2012. Available at \url{http://www.math.tau.ac.il/images/dissertations/eldan_ronen.pdf}

\bibitem{eldan2013thin} Eldan, R., {\it Thin shell implies spectral gap up to polylog via a stochastic localization scheme.}
Geom. and Funct. Anal. (GAFA), Vol. 23, no. 2, (2013), 532 -- 569.



\bibitem{eldan_lehec} Eldan, R., Lehec, J.,
{\it Bounding the norm of a log-concave vector via thin-shell estimates.}
Geom. aspects of Funct. Anal., Lecture Notes in Math., Vol. 2116, Springer (2014),
107--122.

\bibitem{fradelizi} Fradelizi, M., {\it Concentration inequalities for $s$-concave measures of dilations of Borel sets and applications.}
Electron. J. Probab., Vol. 14, no. 71, (2009), 2068--2090.

\bibitem{freedman1975tail}
Freedman, D. A., {\it On tail probabilities for martingales.}
Ann. of Prob., Vol. 3, no. 1, (1975), 100--118.


\bibitem{gian} Giannopoulos, A., {\it
 The isotropic constant in the theory of high-dimensional convex bodies}. Lecture in
 the Slicing Day in Jussieu, Paris, June 2025.

\bibitem{GM2} Giannopoulos, A., Milman, E., {\it $M$-estimates for isotropic convex bodies and their $L_q$-centroid bodies.}
Geom. aspects of Funct. Anal., Lecture Notes in Math., Vol. 2116, Springer (2014), 159--182.

\bibitem{GSTV} Giannopoulos, A., Stavrakakis, P., Tsolomitis, A., Vritsiou, B.-H.,
{\it Geometry of the $L_q$-centroid bodies of an isotropic log-concave measure}.
Trans. Amer. Math. Soc., Vol. 367 (2015), 4569--4593.

\bibitem{glu} Gluskin, E. D., {\it The diameter of the Minkowski compactum is roughly equal to $n$.}
Funktsional. Anal. i Prilozhen., Vol. 15, no. 1, (1981), 72--73. In Russian.
English translation: Functional Anal. Appl., Vol. 15, no. 1, (1981), 57--58.

\bibitem{GO} Gluskin, E. D., Ostrover, Y., {\it Asymptotic equivalence of symplectic capacities.}
Comment. Math. Helv., Vol. 91, no. 1, (2016), 131--144.


\bibitem{Gor} Gordon, Y., {\it Some inequalities for Gaussian processes and applications.}
Israel J. Math., Vol. 50, no. 4, (1985), 265--289.

\bibitem{GLMP} Gordon, Y., Litvak, A. E., Meyer, M., Pajor, A., {\it John's decomposition in the general case and applications.}
J. Diff. Geom., Vol. 68, no. 1, (2004), 99--119.

\bibitem{GO_rotation} Gluskin, E. D., Ostrover, Y., {\it The symplectic size of a randomly rotated convex body.}
Int. Math. Res. Not. IMRN, Vol. 2019, no. 8, (2019), 2538--2559.


\bibitem{GM} Gromov, M., Milman, V. D., {\it A topological application of the isoperimetric inequality.}
Amer. J. Math., Vol. 105, no. 4,  (1983), 843--854.

\bibitem{guan} Guan, Q., {\it A note on Bourgain's slicing problem.} Preprint, arXiv:2412.09075

\bibitem{J} John, F., {\it Extremum problems with inequalities as subsidiary conditions.}
Interscience Publishers, New York, (1948), 187--204.

\bibitem{JS} Jo\'o, I, Stach\'o, L. L., {\it Generalization of an inequality of G. P\'olya concerning the eigenfrequences of vibrating bodies.}
Publ. Inst. Math. (Beograd), Vol. 31, (1982), 65--72.


\bibitem{LV}
Lov\'asz, L. and Vempala, S., {\it The Geometry of Logconcave Functions and Sampling Algorithms.}
Random Structures \& Algorithms {\bf 30} (2007), no. 3, 307--358.

\bibitem{KLS} Kannan, R., Lov\'asz, L., Simonovits M., {\it Isoperimetric problems for convex bodies and a localization lemma}.
Discr. Comput. Geom., Vol. 13, (1995), 541--559.




\bibitem{k_sqrt} Klartag, B., {\it Logarithmic bounds for isoperimetry and slices of convex sets.}
Ars inveniendi analytica, Paper no. 4, (2023), 17 pp.

\bibitem{klartag2022bourgain} Klartag, B., Lehec, J., {\it Bourgain's slicing problem and KLS isoperimetry up to polylog.}
Geom. and Funct. Anal. (GAFA), Vol. 32, no. 5, (2022), 1134--1159.

\bibitem{KL} Klartag, B., Lehec, J., {\it Affirmative resolution of Bourgain’s slicing problem using Guan's bound}.
 Geom. Funct. Anal. (GAFA), Vol. 35, (2025), 1147--1168.

\bibitem{klartag2023spectral}
Klartag, B., Putterman, E., {\it Spectral monotonicity under Gaussian convolution.}
Annales de la Facult\'e des sciences de Toulouse: Math\'ematiques,
Vol. 32, No. 5, (2023), 939--967.

\bibitem{KL_bulletin} Klartag, B., Lehec, J., {\it Isoperimetric inequalities in high dimensional convex sets.}
To appear in Bulletin of the Amer. Math. Soc., arXiv:2406.01324

\bibitem{KL_thin} Klartag, B., Lehec, J.,
{\it Thin-shell bounds via parallel coupling}. Preprint, arXiv:2507.15495

\bibitem{KK} Klartag, B., Koldobsky, A., {\it An example related to the slicing inequality for general measures.}
J. Funct. Anal., Vol. 274, no. 7, (2018),  2089--2112.

\bibitem{Lat} Lata\l{}a, R., {\it On weak tail domination of random vectors.}
Bull. Pol. Acad. Sci. Math., Vol. 57, no. 1, (2009), 75--80.


\bibitem{ledoux} Ledoux, M., {\it  The concentration of measure phenomenon.}
Math. Surveys Monogr., Vol. 89, American Mathematical Society, 2001.

\bibitem{LT} Ledoux, M., Talagrand, M., {\it Probability in Banach Spaces}.
Springer, 1991.


\bibitem{lee2024eldan} Lee, Y. T., Vempala, S., {\it Eldan's stochastic localization and the KLS conjecture: Isoperimetry,
  concentration and mixing.} Ann. Math., Vol. 199, no. 3, (2024), 1043--1092.


\bibitem{legall} Le Gall, J.-F., {\it Brownian motion, martingales, and stochastic calculus}.
Grad. Texts in Math., Vol. 274, Springer, 2016.

\bibitem{Liu} Liu, J., {\it Simple and Sharp Generalization Bounds via Lifting}.
arXiv:2508.18682

\bibitem{MaPi} Marcus, M. B., Pisier, G., {\it Random Fourier series with applications to harmonic analysis}.
Princeton University Press, University of Tokyo Press, 1981.


\bibitem{mazya} Maz'ya, V.,
{\it Sobolev Spaces: with Applications to Elliptic Partial Differential Equations.}
Springer, 2011.

\bibitem{e_milman} Milman, E., {\it On the mean-width of isotropic convex bodies and their associated $L^p$-centroid bodies.}
Int. Math. Res. Not. (IMRN), No. 11, (2015), 3408--3423.

\bibitem{MS} Milman, V. D., Schechtman, G., {\it  Asymptotic theory of finite-dimensional normed spaces}.
Lecture Notes in Math., Vol. 1200, Springer, 1986.

\bibitem{naor} Naor, A., {\it Extension, separation and isomorphic reverse isoperimetry.}
Mem. Eur. Math. Soc., 2024.

\bibitem{oksendal} {\O}ksendal, B., {\it Stochastic differential equations.
An introduction with applications.} Sixth edition. Springer, 2003.

\bibitem{palmon} Palmon, O., {\it The only convex body with extremal distance from the ball is the simplex.}
Israel J. Math., Vol. 80, no. 3, (1992), 337--349.

\bibitem{pisier_book} Pisier, G., {\it The volume of convex bodies and Banach space geometry.}
Cambridge University Press, 1989.

\bibitem{polya} P\'olya, G., {\it Two more inequalities between physical and geometrical quantities.}
J. Indian Math. Soc., Vol. 24, (1960), 413--419.

\bibitem{RR} Reis, V., Rothvoss, T., {\it The subspace flatness conjecture and faster integer programming.} Symposium on Foundations of Computer Science (FOCS 2023),
IEEE Computer Society, (2023), 974--988.

\bibitem{roc} Rockafellar, R. T., {\it  Convex analysis}.
Princeton Math. Ser., No. 28, Princeton University Press, 1970.

\bibitem{rudelson} Rudelson, M., {\it Distances between non-symmetric convex bodies and the  $MM^*$-estimate}.
Positivity, Vol. 4, no. 2, (2000), 161--178.

\bibitem{S} Schmuckenschlaeger, M., {\it Inequalities for exit times and eigenvalues of balls.}
Potential Anal., Vol. 35, no. 3, (2011), 287--300.

\bibitem{shorack2009empirical}
Shorack, G. R., Wellner, J. A., {\it Empirical Processes with Applications to Statistics}.
Wiley Ser. Probab. Math. Statist., John Wiley \& Sons, 1986.

\bibitem{skar} Skarmogiannis, N., {\it On a multi-integral norm defined by weighted sums of log-concave random vectors.}
Proc. Amer. Math. Soc., Vol. 151, no. 10, (2023), 4355--4370.

\bibitem{tal_chaining} Talagrand, M., {\it Upper and Lower Bounds for Stochastic Processes}. Springer, 2021.

\bibitem{vh} van Handel, R., {\it
On the subgaussian comparison theorem}. arXiv:2512.18588

\bibitem{V} Vritsiou, B.-H., {\it Regular ellipsoids and a Blaschke-Santal\'o-type inequality for projections of non-symmetric convex bodies.} J. Funct. Anal., Vol. 286, no. 11, (2024), Paper No. 110414, 40 pp.



\end{thebibliography}
\end{document}